\renewcommand{\fnum@figure}{Figure \thefigure}
\DeclareMathOperator{\tr}{tr} 
\DeclareMathOperator{\e}{e}   
\DeclareMathOperator{\sign}{sign}
\DeclareMathOperator{\grad}{grad}
\DeclareMathOperator{\dive}{div}
\newtheorem{thm}{Theorem}
\newtheorem{prop}[thm]{Proposition}
\newtheorem{lem}[thm]{Lemma}
\newcommand{\R}{{\mathbb R}}
\newcommand{\dd}[2]{\frac{\text{d} #1}{\text{d} #2}}
\newcommand{\cX}{{\sf X}}
\newcommand{\cY}{{\sf Y}}
\newcommand{\cZ}{{\sf 0}}
\newcommand{\cS}{{\sf S}}
\newcommand{\cP}{{\sf P}}
\newcommand{\al}{{\alpha}}
\newcommand{\be}{{\beta}}
\newcommand{\cdts}{\hspace{1ex} \cdots \hspace{1ex}}
\begin{document}

\title{%
On global stability of the Lotka reactions with generalized mass-action kinetics
}

\author{%
Bal\'azs~Boros \and Josef~Hofbauer \and Stefan~M\"uller
}

\institute{%
Bal\'azs Boros and Stefan~M\"uller \at
Radon Institute for Computational and Applied Mathematics, \\
Austrian Academy of Sciences,
Altenbergerstrasse 69, 4040 Linz, Austria \\
\email{balazs.boros@ricam.oeaw.ac.at, stefan.mueller@ricam.oeaw.ac.at} 
\and
Josef Hofbauer \at
Faculty of Mathematics, University of Vienna, \\ Oskar-Morgenstern-Platz 1, 1090 Vienna, Austria \\
\email{josef.hofbauer@univie.ac.at}
}

\date{Received: date / Accepted: date}

\maketitle

\begin{abstract}
\noindent
Chemical reaction networks with generalized mass-action kinetics lead to power-law dynamical systems.
As a simple example, we consider the Lotka reactions 
with two chemical species and arbitrary power-law kinetics.
We study existence, uniqueness, and stability of the positive equilibrium,
in particular, we characterize its global asymptotic stability in terms of the kinetic orders. 
\keywords{chemical reaction network \and power-law kinetics \and Andronov-Hopf bifurcation \and Dulac function}
\end{abstract}


\section{Introduction}

Lotka~\cite{lotka:1910} considered a series of chemical reactions, 
$\cS \to \cX$, $\cX \to \cY$, and $\cY \to \cP$, which transform a substrate into a product.
If the first and the second reaction are assumed to be autocatalytic in $\cX$ and $\cY$, respectively,
then the resulting dynamics is equivalent to the classical Lotka-Volterra predator-prey system~\cite{lotka:1920:a,lotka:1920:b}.

Farkas and Noszticzius \cite{farkas:noszticzius:1985} and Dancs\'o et al.~\cite{dancso:farkas:farkas:szabo:1991}
considered generalized Lotka-Volterra schemes,
that is, the Lotka reactions with power-law kinetics (and non-negative real exponents),
and provided a local stability analysis 
as well as first integrals.
We continue this line of research and consider the Lotka reactions with {\em arbitrary} power-law kinetics.
Moreover, we provide a {\em global} stability analysis.

As in the example of the Lotka reactions,
every chemical reaction network with generalized mass-action kinetics (power-law kinetics)
leads to a generalized mass-action system.
For such systems,
M\"uller and Regensburger~\cite{mueller:regensburger:2012,mueller:regensburger:2014} analyzed existence and uniqueness of (special) equilibria.
The works~\cite{farkas:noszticzius:1985,dancso:farkas:farkas:szabo:1991} and the present work are first steps
towards an understandig of their stability. 

In a classical Lotka-Volterra system, the unique positive equilibrium is neutrally stable,
and all other positive solutions are periodic, corresponding to closed orbits.
Generically, a Lotka-Volterra scheme still has a unique positive equilibrium, 
but it can be stable or unstable.
The corresponding Andronov-Hopf bifurcation can be supercritical, subcritical, or degenerate
leading to asymptotically stable, repelling, or a continuum of closed orbits. 

As our main result,
we characterize global asymptotic stability of the unique positive equilibrium in terms of the kinetic orders.
Thereby, we apply the Bendixson-Dulac test in order to rule out periodic solutions.
Further, we rule out unbounded solutions and solutions approaching the boundary of the positive quadrant.

The paper is organized as follows.
In Section 2, we motivate generalized Lotka-Volterra schemes as chemical reaction networks with generalized mass-action kinetics.
In Section 3, we present our main results,
and in Section 4, we provide the corresponding proofs.


\section{Lotka-Volterra schemes as generalized mass-action systems}

As in the original work by Lotka \cite{lotka:1910},
we start by considering a series of irreversible chemical reactions,
\[
\cS \longrightarrow \cX, \quad
\cX \longrightarrow \cY, \quad
\cY \longrightarrow \cP.
\]
The first reaction turns a substrate into species $\cX$, the second reaction transforms $\cX$ into $\cY$,
and the third reaction turns species $\cY$ into a product or degrades it.
We are interested in the dynamics of $\cX$ and $\cY$ only,
in particluar, we assume that the substrate is present in constant amount and that the product does not affect the dynamics.
As a consequence, we can omit substrate and product from consideration, and obtain the simplified reactions
\[
\cZ \longrightarrow \cX, \quad
\cX \longrightarrow \cY, \quad
\cY \longrightarrow \cZ.
\]

To obtain a classical Lotka-Volterra system as in~\cite{lotka:1920:a,lotka:1920:b},
we assume the first and the second reaction to be autocatalytic,
in particular, we define the kinetics of the reactions as
\[
v_{\cZ\to\cX} = k_{\cZ\to\cX} [\cX], \quad
v_{\cX\to\cY} = k_{\cX\to\cY} [\cX][\cY], \quad
v_{\cY\to\cZ} = k_{\cY\to\cZ} [\cY]
\]
with rate constants $k_{\cZ\to\cX}, k_{\cX\to\cY}, k_{\cY\to\cZ} > 0$.
In terms of chemical reaction network theory (CRNT),
we specify the system as a chemical reaction network with generalized mass-action kinetics,
that is, as a generalized mass-action system~\cite{mueller:regensburger:2012,mueller:regensburger:2014}.
The network arises from a directed graph with edges representing reactions,
\[
\phantom{.} \quad
\begin{array}{c@{}c@{}c}
1 & \longrightarrow & 2 \\
& \nwarrow \quad \swarrow & \\
& 3
\end{array} \quad .
\]
To each node, we assign a (stoichiometric) complex to determine the stoichiometry of the network,
\[
\phantom{.} \quad
\begin{array}{c@{}c@{}c}
\cZ & \longrightarrow & \cX \\
& \nwarrow \quad \swarrow \\
& \cY
\end{array} \quad ,
\]
and additionally a kinetic-order complex to determine the kinetics,
\[
\phantom{.} \qquad\quad
\begin{array}{c@{}c@{}c}
\cX \cdts \cZ & \longrightarrow & \cX \cdts \cX+\cY \\
& \nwarrow \quad \swarrow \\
& \cY
\end{array} \quad .
\]
For the third node, we do not state the kinetic-order complex explicitly
since it coincides with the stoichiometric complex.
The resulting ODE for the concentrations $x=[\cX]$ and $y=[\cY]$ is given by
\begin{align} \label{ode_classical_LV}
\dot x &= k_{12} \, x - k_{23} \, x y, \\
\dot y &= k_{23} \, x y - k_{31} \, y \nonumber
\end{align}
with $k_{12}=k_{\cZ\to\cX}$, $k_{23}=k_{\cX\to\cY}$, $k_{31}=k_{\cY\to\cZ}$.

In this work, we consider more general stoichiometry and kinetics.
In the second reaction,
we allow that $\cX$ and $\cY$ have different stoichiometric coefficients,
in particular, we consider $n \cX \longrightarrow \cY$ with $n>0$.
Further, we allow general power-law kinetics for all reactions,
\[
v_{12} = k_{12} [\cX]^{\al_1} [\cY]^{\be_1}, \quad
v_{23} = k_{23} [\cX]^{\al_2} [\cY]^{\be_2}, \quad
v_{31} = k_{31} [\cX]^{\al_3} [\cY]^{\be_3}
\]
with exponents $\al_1, \be_1, \al_2, \be_2, \al_3, \be_3 \in \R$.
Also this system can be specified as a chemical reaction network with generalized mass-action kinetics,
that is, as a directed graph with stoichiometric and kinetic-order complexes:

\begin{equation} \label{network}
\begin{array}{c@{}c@{}c}
\al_1 \cX + \be_1 \cY \cdts \cZ \hspace{-2ex} & \longrightarrow & \hspace{-2ex} n \cX \cdts \al_2 \cX + \be_2 \cY \\
& \nwarrow \quad \swarrow \\
& \cY & \\[-0.5ex]
& \vdots & \\
& \al_3 \cX + \be_3 \cY &
\end{array}
\end{equation}
The resulting ODE is given by
\begin{align*}
\dot x &= n k_{12} \, x^{\al_1} y^{\be_1} - n k_{23} \, x^{\al_2} y^{\be_2} , \\
\dot y &= k_{23} \, x^{\al_2} y^{\be_2}   - k_{31} \, x^{\al_3} y^{\be_3} .   \nonumber
\end{align*}

In order to define the dynamics on the {\em non-negative} quadrant,
one allows only non-negative exponents.
Further, in order to ensure forward-invariance of the non-negative quadrant, one requires $\al_2,\be_3 > 0$.
However, in this work, we allow real exponents
and consider the dynamics on the {\em positive} quadrant.

In CRNT, one is often interested in results for {\em all} rate constants (and given stoichiometry),
in our example, in existence, uniqueness, and stability of equilibria for all $k_{12}$, $k_{23}$, $k_{31}$ (and given $n$).

Finally, 
we introduce the parameters
\begin{equation} \label{equ_k}
k_1=n k_{12}, \, k_2=n k_{23}, \, k_3=k_{23}, \, k_4=k_{31} ,
\end{equation}
which are in one-to-one correspondence with the stoichiometric coefficient $n$ and the rate constants $k_{12}, \, k_{23}, k_{31}$,
and obtain a generalized Lotka-Volterra scheme~\cite{farkas:noszticzius:1985,dancso:farkas:farkas:szabo:1991},
\begin{align} \label{ode_math}
\dot x &= k_1 \, x^{\al_1} y^{\be_1} - k_2 \, x^{\al_2} y^{\be_2} , \\
\dot y &= k_3 \, x^{\al_2} y^{\be_2} - k_4 \, x^{\al_3} y^{\be_3} . \nonumber
\end{align}

For the technical analysis,
we often consider an ODE which is orbitally equivalent on the positive quadrant and has two exponents less,
\begin{align} \label{ode_trafo}
\dot x &= k_1 \, x^{a_1} y^{b_1} - k_2 , \\
\dot y &= k_3 - k_4 \, x^{a_3} y^{b_3} , \nonumber
\end{align}
where $a_1=\al_1-\al_2$, $b_1=\be_1-\be_2$, $a_3=\al_3-\al_2$, $b_3=\be_3-\be_2$.
Dancs{\'o} et al.~\cite{dancso:farkas:farkas:szabo:1991} studied the ODE~\eqref{ode_math}
in another orbitally equivalent form,
\begin{align} \label{ode_dancso}
\dot x &= k_1 \, x^{\hat p} - k_2 \, x^{p} y^{q} , \\
\dot y &= k_3 \, x^{p} y^{q} - k_4 \, y^{\hat q} , \nonumber
\end{align}
where $\hat p=\al_1-\al_3$, $\hat q=\be_3-\be_1$, $p=\al_2-\al_3$, $q=\be_2-\be_1$.
The authors carried out a stability analysis,
in particular, they studied the Andronov-Hopf bifurcation of the unique positive equilibrium (provided it exists)
and a so-called zip bifurcation.
In this work, we provide a global stability analysis.


\section{Main results} \label{sec:main_results}

We investigate the ODE~\eqref{ode_math},
in particular, we are interested in the qualitative dynamics on the positive quadrant  $\R^2_+$.
We call a positive equilibrium {\em globally asymptotically stable}
if it is Lyapunov stable and from each positive initial conditions the solution converges to the equilibrium.

We introduce
\begin{align} \label{eq:C_mtx}
C =
\begin{pmatrix}
a_1 & b_1 \\
a_3 & b_3
\end{pmatrix}
=
\begin{pmatrix}
\al_1 - \al_2 & \be_1 - \be_2 \\
\al_3 - \al_2 & \be_3 - \be_2
\end{pmatrix}
\quad \text{and} \quad
k=(k_1,k_2,k_3,k_4) \in \R^4_+ 
\end{align}
and start by examining the number of equilibria.

\begin{prop} \label{equilibria}
For the ODE~\eqref{ode_math}, the following statements hold.
\begin{enumerate}[{\rm(i)}]
\item
If $\det C \neq 0$, then there exists a unique positive equilibrium $(x^*,y^*)$ given by
\begin{align} \label{eq:x_star_y_star}
x^* = \left(\frac{k_2}{k_1}\right)^{\textstyle \frac{b_3}{\det C}} \left(\frac{k_3}{k_4}\right)^{\textstyle -\frac{b_1}{\det C}}
\quad \text{and} \quad 
y^* = \left(\frac{k_2}{k_1}\right)^{\textstyle -\frac{a_3}{\det C}}  \left(\frac{k_3}{k_4}\right)^{\textstyle \frac{a_1}{\det C}}.
\end{align}
\item
If $\det C = 0$, then the set of positive equilibria is either empty or infinite, depending on $k$.
\end{enumerate}
\end{prop}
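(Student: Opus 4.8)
The plan is to turn the equilibrium conditions into a linear system by taking logarithms. A point $(x,y)\in\R^2_+$ is an equilibrium of~\eqref{ode_math} if and only if $k_1 x^{\al_1}y^{\be_1}=k_2 x^{\al_2}y^{\be_2}$ and $k_3 x^{\al_2}y^{\be_2}=k_4 x^{\al_3}y^{\be_3}$. Dividing both equations by $x^{\al_2}y^{\be_2}>0$ (equivalently, passing to the orbitally equivalent form~\eqref{ode_trafo}), this is the same as
\begin{align*}
x^{a_1}y^{b_1}=\frac{k_2}{k_1}, \qquad x^{a_3}y^{b_3}=\frac{k_3}{k_4}.
\end{align*}
Setting $u=\ln x$, $v=\ln y$, and $w=\bigl(\ln(k_2/k_1),\ln(k_3/k_4)\bigr)^{\!\top}$, and taking logarithms, this becomes precisely the linear system $C\,(u,v)^{\top}=w$. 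Since $(x,y)\mapsto(\ln x,\ln y)$ is a bijection from $\R^2_+$ onto $\R^2$, the positive equilibria of~\eqref{ode_math} correspond one-to-one with the solutions of $C\,(u,v)^{\top}=w$.

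For part~(i), if $\det C\neq 0$ then $C$ is invertible, so $C\,(u,v)^{\top}=w$ has the unique solution $(u,v)^{\top}=C^{-1}w$; Cramer's rule gives
\begin{align*}
u=\frac{b_3\ln(k_2/k_1)-b_1\ln(k_3/k_4)}{\det C}, \qquad
v=\frac{-a_3\ln(k_2/k_1)+a_1\ln(k_3/k_4)}{\det C},
\end{align*}
and exponentiating yields exactly the formulas~\eqref{eq:x_star_y_star} for the unique positive equilibrium $(x^*,y^*)$.

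For part~(ii), if $\det C=0$ then either $w$ lies in the range of $C$ or it does not. In the first case the solution set of $C\,(u,v)^{\top}=w$ is a nonempty affine subspace of $\R^2$ of dimension $1$ (when $\operatorname{rank} C=1$) or $2$ (when $C=0$), hence infinite; in the second case it is empty. Which alternative occurs depends on $k$: for instance, when $\operatorname{rank} C=1$ the condition $w\in\operatorname{range}(C)$ is a single linear relation between $\ln(k_2/k_1)$ and $\ln(k_3/k_4)$, while for $C=0$ it reduces to $k_1=k_2$ and $k_3=k_4$. Transporting back through the exponential bijection, the set of positive equilibria is correspondingly infinite or empty.

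The reduction to a linear system is immediate, so there is no genuine obstacle here; the only point that needs a little care is the case distinction in~(ii), where one must verify that a singular but nonzero $C$ together with a compatible right-hand side produces a whole line of solutions — so that infinitely many equilibria arise and the "finitely many, at least two" alternative never occurs.
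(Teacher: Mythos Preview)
Your proof is correct and follows essentially the same route as the paper: reduce the equilibrium conditions to $x^{a_1}y^{b_1}=k_2/k_1$, $x^{a_3}y^{b_3}=k_3/k_4$, take logarithms to obtain the linear system $C(\ln x,\ln y)^{\top}=(\ln(k_2/k_1),\ln(k_3/k_4))^{\top}$, and then read off (i) from invertibility of $C$ and (ii) from whether the right-hand side lies in the image of $C$. Your treatment of (ii) is a bit more explicit about the rank cases than the paper's, but the argument is the same.
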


We consider only the generic case (i) and assume $\det C \neq 0$ in the following.
Case (ii), in particular the related zip bifurcation, was studied in detail in~\cite{dancso:farkas:farkas:szabo:1991}.
We proceed by examining the asymptotic stability of the unique positive equilibrium using linearization.

\begin{prop} \label{fix_k_local}
Assume $\det C \neq 0$ and let $(x^*,y^*)$ be the unique positive equilibrium of the ODE~\eqref{ode_math}.
Then the following statements hold.
\begin{enumerate}[{\rm(i)}]
\item
The Jacobian matrix $J$ at $(x^*,y^*)$ is given by
\begin{align*}
J
&= (x^*)^{\al_2} (y^*)^{\be_2} 
\begin{pmatrix}
k_2 & 0 \\
0 & k_3
\end{pmatrix}
\begin{pmatrix}
a_1 & b_1 \\
-a_3 & -b_3
\end{pmatrix}
\begin{pmatrix}
\textstyle \frac{1}{x^*} & 0 \\
0 & \textstyle \frac{1}{y^*}
\end{pmatrix} \\
&= (x^*)^{\al_2} (y^*)^{\be_2}
\begin{pmatrix}
a_1  \frac{k_2}{x^*} &  b_1 \frac{k_2}{y^*} \\
-a_3 \frac{k_3}{x^*} & -b_3 \frac{k_3}{y^*}
\end{pmatrix}
\end{align*}
and hence
\begin{align*}
\sign(\det J) &= - \sign(\det C) , \\
\sign(\tr J)  &= \sign \left( a_1 \frac{k_2}{x^*} - b_3 \frac{k_3}{y^*} \right) .
\end{align*}
\item
The linearization at $(x^*,y^*)$ is asymptotically stable if and only if
$\det C < 0$ and $a_1\frac{k_2}{x^*} - b_3\frac{k_3}{y^*} < 0$.
\end{enumerate}
\end{prop}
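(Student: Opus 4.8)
The plan is a direct computation of the Jacobian followed by the scalar Routh--Hurwitz criterion for planar systems. For part (i), I would write the right-hand side of~\eqref{ode_math} as $f(x,y) = k_1 x^{\al_1} y^{\be_1} - k_2 x^{\al_2} y^{\be_2}$ and $g(x,y) = k_3 x^{\al_2} y^{\be_2} - k_4 x^{\al_3} y^{\be_3}$, differentiate each monomial using $\partial_x\, x^a y^b = a\, x^a y^b / x$ and $\partial_y\, x^a y^b = b\, x^a y^b / y$, and evaluate at $(x^*,y^*)$. The key simplification is to use the two equilibrium identities $k_1 (x^*)^{\al_1}(y^*)^{\be_1} = k_2 (x^*)^{\al_2}(y^*)^{\be_2}$ and $k_3 (x^*)^{\al_2}(y^*)^{\be_2} = k_4 (x^*)^{\al_3}(y^*)^{\be_3}$, which hold because $(x^*,y^*)$ is an equilibrium of~\eqref{ode_math} and in each of the two equations both terms are strictly positive on $\R^2_+$. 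After substitution, every partial derivative collapses to a multiple of the common factor $(x^*)^{\al_2}(y^*)^{\be_2}$; for instance $\partial_x f|_{(x^*,y^*)} = (\al_1-\al_2) k_2 (x^*)^{\al_2}(y^*)^{\be_2}/x^* = a_1 k_2 (x^*)^{\al_2}(y^*)^{\be_2}/x^*$, and analogously for the other three entries, which yields exactly the claimed factorization of $J$.

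From the factorization, $\det J$ is the product of the four scalars $(x^*)^{2\al_2}(y^*)^{2\be_2}$, $k_2 k_3$, $\det\!\begin{pmatrix} a_1 & b_1 \\ -a_3 & -b_3\end{pmatrix} = -(a_1 b_3 - a_3 b_1) = -\det C$, and $1/(x^* y^*)$; since the first, second and fourth are positive, $\sign(\det J) = -\sign(\det C)$. Reading off the diagonal gives $\tr J = (x^*)^{\al_2}(y^*)^{\be_2}\bigl(a_1 \tfrac{k_2}{x^*} - b_3 \tfrac{k_3}{y^*}\bigr)$, hence the stated sign. For part (ii), I would invoke the Routh--Hurwitz criterion in the plane: a real $2\times 2$ matrix is Hurwitz (both eigenvalues have negative real part) if and only if $\tr J < 0$ and $\det J > 0$. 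By part (i) this is equivalent to $\det C < 0$ together with $a_1 \tfrac{k_2}{x^*} - b_3 \tfrac{k_3}{y^*} < 0$, which is the assertion.

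I do not expect a genuine obstacle here; the proposition is essentially a bookkeeping exercise. The only points requiring a little care are the justification of the equilibrium identities (one needs strict positivity of the monomials on $\R^2_+$ so that the two-term equations force termwise equality) and the fact that, for a planar linear system, asymptotic stability is \emph{equivalent} to — not merely implied by — the two sign conditions, so that the characterization is an "if and only if".
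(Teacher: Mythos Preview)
Your proposal is correct and follows essentially the same route as the paper: compute the four partial derivatives, collapse each using the equilibrium identities $k_1(x^*)^{a_1}(y^*)^{b_1}=k_2$ and $k_3=k_4(x^*)^{a_3}(y^*)^{b_3}$ to pull out the common factor $(x^*)^{\al_2}(y^*)^{\be_2}$, read off the factorization, and deduce (ii) from the sign of the trace and determinant. Your write-up is in fact slightly more explicit than the paper's (which simply says ``(i) immediately implies (ii)''), in particular in spelling out the Routh--Hurwitz equivalence and the product decomposition of $\det J$.
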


We are also interested in asymptotic stability
when the trace of the Jacobian matrix vanishes,
that is, when linearization does not give any information.

\begin{prop} \label{hopf}
For the ODE~\eqref{ode_math}, assume $\det C < 0$ and $a_1\frac{k_2}{x^*} - b_3\frac{k_3}{y^*} = 0$.
Further, let
\[
d_1 = a_3 [(1 + a_3 - a_1) b_1 b_3 - a_1 a_3 (1 + b_1 - b_3)] .
\]
Then the following statements hold.
\begin{enumerate}[{\rm(i)}]
\item
If $d_1 < 0$, the unique positive equilibrium is asymptotically stable.
If $d_1 > 0$, it is repelling.
\item
If we consider a one-parameter family of ODEs~\eqref{ode_math}
along which the eigenvalues of the Jacobian matrix cross the imaginary axis with positive speed,
for example, with parameter $\mu = a_1\frac{k_2}{x^*} - b_3\frac{k_3}{y^*}$,
then an Andronov-Hopf bifurcation occurs at $\mu=0$.
If $d_1 < 0$, the bifurcation is supercritical (and there exists an asymptotically stable closed orbit for small $\mu>0$).
If $d_1 > 0$, it is subcritical (and there exists a repelling closed orbit for small $\mu<0$).
\end{enumerate}
\end{prop}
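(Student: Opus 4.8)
The plan is to reduce to the orbitally equivalent system~\eqref{ode_trafo} and compute the first Lyapunov coefficient of the Andronov-Hopf bifurcation. First I would move the equilibrium to the origin and rescale so that the linearization is in the standard form $\dot u = -\omega v + (\text{h.o.t.})$, $\dot v = \omega u + (\text{h.o.t.})$; here the hypothesis $\det C<0$ together with $\tr J = 0$ (i.e.\ $a_1\frac{k_2}{x^*} - b_3\frac{k_3}{y^*} = 0$) guarantees a pair of purely imaginary eigenvalues $\pm i\omega$ with $\omega^2 = -\det C\cdot(x^*)^{\al_2}(y^*)^{\be_2}\frac{k_2 k_3}{x^* y^*} > 0$. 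Because the vector field is a power-law field, the substitution $x = x^* \e^{\xi}$, $y = y^* \e^{\eta}$ is convenient: it turns monomials $x^a y^b$ into $\e^{a\xi + b\eta}$, whose Taylor expansion around $0$ has coefficients that are just monomials in $a,b$. This is what will ultimately produce the polynomial $d_1$ in $a_1,a_3,b_1,b_3$.

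Next I would expand the transformed vector field to third order in $(\xi,\eta)$ (second order suffices in principle, but in the planar Hopf setting the standard first-Lyapunov-coefficient formula needs the quadratic and cubic terms), put the linear part into the real Jordan form by a linear change of coordinates, and then apply the classical formula for the first Lyapunov coefficient $\ell_1$ — e.g.\ the Guckenheimer--Holmes formula
\[
16\,\ell_1 = f_{uuu} + f_{uvv} + g_{uuv} + g_{vvv}
+ \tfrac{1}{\omega}\big[ f_{uv}(f_{uu}+f_{vv}) - g_{uv}(g_{uu}+g_{vv}) - f_{uu}g_{uu} + f_{vv}g_{vv} \big],
\]
evaluated at the origin. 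The sign of $\ell_1$ determines super- vs.\ subcriticality, and asymptotic stability of the equilibrium at $\mu = 0$ follows from $\ell_1 < 0$ by the standard reduction to the center manifold (which here is the whole plane). Carrying this out and simplifying, using the equilibrium relations $k_1 (x^*)^{a_1}(y^*)^{b_1} = k_2$ and $k_4 (x^*)^{a_3}(y^*)^{b_3} = k_3$ and the trace condition to eliminate $k$-dependence, should collapse the expression to a positive multiple of $d_1 = a_3[(1+a_3-a_1)b_1 b_3 - a_1 a_3(1+b_1-b_3)]$; item~(i) then reads off the sign, and item~(ii) is the textbook Hopf bifurcation theorem once transversality is checked (which is immediate for the stated parameter $\mu$, since $\frac{d}{d\mu}\Re\lambda = \tfrac12 \frac{d}{d\mu}\tr J \neq 0$ by construction).

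The main obstacle is the algebra: the cubic-order expansion of two coupled power-law terms produces many monomials, and the cancellations that reduce the raw Lyapunov coefficient to the compact factored form $d_1$ are delicate. I expect the key simplifications to come from (a) systematically using the two equilibrium identities and the trace condition before differentiating, so that the $k_i$ and $(x^*,y^*)$ drop out and only $a_1,b_1,a_3,b_3$ remain, and (b) choosing the linear normalizing transformation to exploit the structure of $J$ (note $J$ factors as $D_1 \tilde C D_2$ with diagonal $D_1,D_2$ and $\tilde C = \begin{pmatrix} a_1 & b_1 \\ -a_3 & -b_3\end{pmatrix}$), which keeps intermediate expressions manageable. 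A secondary point worth care is that $d_1$ as written is only \emph{proportional} to $\ell_1$, with an explicitly positive proportionality constant depending on $\omega$ and $(x^*,y^*)$; one must verify that constant is indeed positive (it is, being a product of positive powers of $x^*,y^*$ times $1/\omega$ times a positive combinatorial factor) so that the signs of $d_1$ and $\ell_1$ agree.
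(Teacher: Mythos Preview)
Your plan is correct and essentially matches the paper's argument: pass to the orbitally equivalent system~\eqref{ode_trafo}, scale so that the equilibrium sits at a known point, expand to third order, and read off the sign of the first focal value. The paper carries this out with the equilibrium at $(1,1)$ (via the rescaling $x\mapsto x/x^*$, $y\mapsto y/y^*$, which has the same effect as your exponential substitution in that all derivatives of $x^{a}y^{b}$ at $(1,1)$ are polynomials in $a,b$) and then applies a ready-made focal-value formula (Equation~\eqref{strudel} in Appendix~A) that already incorporates the linear normalization step. So where you propose to put $J$ into real Jordan form by hand and then use the Guckenheimer--Holmes formula, the paper short-circuits this by plugging the partial derivatives and the entries $a,b,c$ of $J$ directly into~\eqref{strudel}; the two routes are equivalent, but the latter keeps the bookkeeping lighter and makes the collapse to $d_1$ less painful.
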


This result is essentially Theorem 1 in Dancs\'o et al.~\cite{dancso:farkas:farkas:szabo:1991},
apart from the expression $d_1$ for the first focal value.
For details, see Subsection~\ref{sec:strudel}.

If $a_1=b_3=0$ (and hence $d_1=0$) and $a_3b_1 > 0$,
then the unique positive equilibrium is a center, which can be shown by finding a first integral, see~\cite{farkas:noszticzius:1985}.
This includes the classical Lotka-Volterra system \eqref{ode_classical_LV}, where $a_3 = b_1 = -1$.
For further details, see 
Subsection~\ref{subsubsec:first_integral}.

{\bf Remark.}
Llibre~\cite{llibre:2016} claims that Theorem 1 in Dancs\'o et al.~\cite{dancso:farkas:farkas:szabo:1991} is wrong
and provides the following ``counter example'':
$\al_1 = \be_3 = 1$, $\beta_1 = \alpha_3 = 0$, $\al_2 = \be_2 = 2,$ $k_1 = k_2 = 1$, $k_3 = k_4 = 1 + \mu$
and hence
$a_1 = b_3 = -1$, $b_1 = a_3 = -2$, $x^* = y^* = 1$, $\det C < 0$, and $a_1\frac{k_2}{x^*} - b_3\frac{k_3}{y^*} = \mu$.
As a result, one has $d_1 = 0$ (also from the incorrect expression in Theorem 1 in~\cite{dancso:farkas:farkas:szabo:1991}),
and the nature of the Andronov-Hopf bifurcation is not determined by the first focal value.
For $\mu = 0$, there is a first integral $H(x,y) = x+y+ \frac1{xy}$,
see \cite{llibre:2016} or already \cite{dancso:farkas:farkas:szabo:1991} (p.~122, Table I, case (ii) with $\hat p = \hat q = 1$, $p = q = 2$). 
Therefore the Andronov-Hopf bifurcation is degenerate, 
that is, all closed orbits occur at $\mu = 0$.
Llibre's ``counter example'' is an explicit example of a degenerate Andronov-Hopf bifurcation which was already well described by Dancs\'o et al.
Without explaining it, Llibre uses a terminology which is non-standard in the dynamical systems community.
The case of a degenerate Andronov-Hopf bifurcation (where all periodic solutions appear at the critical parameter value) is included in most treatments of this theory,
starting with the original theorems of Andronov and Leontovich~\cite{andronov:leontovich:1937} and Hopf~\cite{hopf:1942}.
See, for example, Lemma 7.2.5 and Theorem 7.2.3 in \cite{farkas:1994}.

%

Next, we characterize the exponents $\al_1$, $\be_1$, $\al_2$, $\be_2$, $\al_3$, $\be_3$ in the ODE~\eqref{ode_math}
for which the unique positive equilibrium is asymptotically stable for all parameters~$k$.
Recall that $k$ depends on the rate constants $k_{12}, k_{23}, k_{31}$ and the stoichiometric coefficient~$n$
of the underlying chemical reaction network~\eqref{network}.
As usual in CRNT, we are also interested in statements which hold for all rate constants, but given stoichiometry,
that is, for all $k$ with $k_2 = n k_3$, see Equations~\eqref{equ_k}.

\begin{thm} \label{all_k_local}
Assume $\det C \neq 0$, that is, for all parameters $k$, the ODE~\eqref{ode_math} admits a unique positive equilibrium.
Further, let $n>0$.
Then the following statements are equivalent.
\begin{enumerate}[{\rm(i)}]
\item For all $k$, the unique positive equilibrium is asymptotically stable.
\item For all $k$ with $k_2 = n k_3$, the unique positive equilibrium is asymptotically stable.
\item $\det C < 0$, $a_1 \le 0 \le b_3$, and $(a_1,b_3) \neq (0,0)$.
\end{enumerate}
\end{thm}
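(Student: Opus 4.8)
The plan is to prove the cycle of implications (i) $\Rightarrow$ (ii) $\Rightarrow$ (iii) $\Rightarrow$ (i). The first is trivial, since the parameters with $k_2 = n k_3$ are a special case. For (iii) $\Rightarrow$ (i), I would argue directly: for any $k \in \R^4_+$ the unique positive equilibrium $(x^*,y^*)$ exists by Proposition~\ref{equilibria}(i), and since $a_1 \le 0 \le b_3$ while $k_2, k_3, x^*, y^* > 0$, the number $a_1 \frac{k_2}{x^*} - b_3 \frac{k_3}{y^*}$ is $\le 0$, and strictly negative because $(a_1,b_3) \ne (0,0)$. Together with $\det C < 0$, Proposition~\ref{fix_k_local}(ii) shows that the linearization, hence the equilibrium, is asymptotically stable.

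The substance of the argument is (ii) $\Rightarrow$ (iii), which I would prove by contraposition: assuming $\det C \ne 0$ and that (iii) fails, I would exhibit rate constants with $k_2 = n k_3$ for which the equilibrium is not asymptotically stable. There are three cases. If $\det C > 0$, then Proposition~\ref{fix_k_local}(i) gives $\det J < 0$ for every $k$, so the equilibrium is always a saddle. If $\det C < 0$ and $(a_1,b_3) = (0,0)$, then $\det C = -a_3 b_1$ forces $a_3 b_1 > 0$, the trace vanishes for every $k$, and the equilibrium is a center (by the first-integral argument recalled after Proposition~\ref{hopf}), hence not asymptotically stable. The remaining case is $\det C < 0$ with $a_1 > 0$ or $b_3 < 0$; here I claim the trace can be made strictly positive, so that, together with $\det J > 0$, the equilibrium is repelling and linearization alone settles instability (no focal-value computation is needed).

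For that last case I would parametrize the admissible rate constants by $u = k_2/k_1$ and $w = k_3/k_4$, which range independently over $\R_+$ even under $k_2 = n k_3$ (fix $k_3$ and vary $k_1, k_4$). From~\eqref{eq:x_star_y_star} one obtains $\frac{k_2 y^*}{k_3 x^*} = n\, u^{-(a_3+b_3)/\det C}\, w^{(a_1+b_1)/\det C}$, and since $\frac{k_3}{y^*} > 0$,
\[
\sign\!\left( a_1 \frac{k_2}{x^*} - b_3 \frac{k_3}{y^*} \right) = \sign\!\left( a_1 n\, P - b_3 \right), \qquad P := u^{-(a_3+b_3)/\det C}\, w^{(a_1+b_1)/\det C} .
\]
The exponent vector $(-(a_3+b_3),\, a_1+b_1)$ is nonzero: if it vanished, then $b_3 = -a_3$, $b_1 = -a_1$, and $\det C = a_1 b_3 - a_3 b_1 = 0$, a contradiction. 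Hence $P$ takes every positive value. If $a_1 > 0$, choosing $P$ large makes $a_1 n P - b_3 > 0$; if $b_3 < 0$, then either $a_1 \ge 0$ and $a_1 n P - b_3 \ge -b_3 > 0$ automatically, or $a_1 < 0$ and choosing $P$ small pushes $a_1 n P$ near $0$, hence above $b_3$. In every subcase the trace is positive for suitable $k$, completing the contraposition.

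The main obstacle is this last case: one must steer the trace to a positive value using only rate constants permitted by the stoichiometric relation $k_2 = n k_3$, and the decisive structural point is that the relevant exponent vector is nonzero exactly when $\det C \ne 0$. A secondary subtlety is the borderline situation $a_1 = b_3 = 0$, where linearization carries no information and one must invoke the center (first-integral) result rather than the Andronov--Hopf analysis of Proposition~\ref{hopf}.
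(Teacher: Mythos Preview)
Your proof is correct and follows the same overall architecture as the paper: (iii) $\Rightarrow$ (i) via Proposition~\ref{fix_k_local}(ii), the case $(a_1,b_3)=(0,0)$ handled by the first-integral/center argument, and the core step being that the trace-related quantity ranges over all of $\R_+$ once $k$ (even with $k_2=nk_3$) varies.

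The one genuine difference is \emph{how} you establish that surjectivity. You plug the closed form~\eqref{eq:x_star_y_star} into $\tfrac{k_2 y^*}{k_3 x^*}$, obtain the monomial $n\,u^{-(a_3+b_3)/\det C} w^{(a_1+b_1)/\det C}$, and then argue that the exponent vector $(-(a_3+b_3),\,a_1+b_1)$ is nonzero precisely because $\det C\ne 0$. The paper takes a shorter route that avoids this computation entirely: it simply observes that $x^*,y^*$ can be \emph{prescribed} arbitrarily---given any target $(x^*,y^*)$ and any $k_2,k_3$ (with $k_2=nk_3$ if required), one back-solves $k_1=k_2(x^*)^{-a_1}(y^*)^{-b_1}$ and $k_4=k_3(x^*)^{-a_3}(y^*)^{-b_3}$---so $K=\tfrac{k_3}{k_2}\tfrac{x^*}{y^*}$ is manifestly arbitrary. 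Your route is a bit more algebraic and yields the pleasant structural observation that $\det C\ne 0$ is exactly what prevents the exponent vector from vanishing; the paper's route is quicker and sidesteps the case split on the signs of $a_1,b_3$ in the final paragraph.
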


After ruling out periodic solutions, unbounded solutions, and solutions approaching the boundary of the positive quadrant,
we find that -- apart from some boundary cases -- {\em global} asymptotic stability for all $k$ follows from asymptotic stability for all $k$.
\begin{thm} \label{all_k_global}
Assume $\det C \neq 0$, that is, for all parameters $k$, the ODE~\eqref{ode_math} admits a unique positive equilibrium.
Further, let $n>0$.
Then the following statements are equivalent.
\begin{enumerate}[{\rm(i)}]
\item For all $k$, the unique positive equilibrium is globally asymptotically stable.
\item For all $k$ with $k_2 = n k_3$, the unique positive equilibrium is globally asymptotically stable.
\item $\det C < 0$ and either
\begin{enumerate}[$\bullet$]
\item $a_1 < 0 < b_3$,
\item $a_1 < 0 = b_3$, $a_3<0$, and $b_1 \leq -1$, or
\item $a_1 = 0 < b_3$, $a_3 \leq -1$, and $b_1<0$.
\end{enumerate}
\end{enumerate}
\end{thm}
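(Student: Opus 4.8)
\emph{Proof plan.} The implication (i)$\Rightarrow$(ii) is immediate. For (ii)$\Rightarrow$(iii), note that global asymptotic stability implies asymptotic stability of the linearization, so (ii) together with the equivalence of~(ii) and~(iii) in Theorem~\ref{all_k_local} already forces $\det C<0$, $a_1\le 0\le b_3$, and $(a_1,b_3)\neq(0,0)$. It then remains to rule out the configurations not listed in~(iii): $a_1<0=b_3$ with $b_1>-1$ and, symmetrically, $a_1=0<b_3$ with $a_3>-1$. For each of these I would fix a convenient $k$ with $k_2=nk_3$ and produce an explicit forward solution of the orbitally equivalent ODE~\eqref{ode_trafo} that does not converge to $(x^*,y^*)$ --- it is unbounded or it approaches the boundary of $\R^2_+$. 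The construction uses a monomial barrier curve $y=c\,x^{\gamma}$ (respectively $x=c\,y^{\gamma}$): for a suitable choice of $c$ and $\gamma$ the vector field of~\eqref{ode_trafo} crosses this curve in only one direction, so a trajectory started on the outer side is confined to an unbounded region, or a region whose closure meets the boundary, that excludes a neighbourhood of the equilibrium. The thresholds $b_1=-1$ and $a_3=-1$ are exactly where such a one-sided barrier ceases to exist.

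For (iii)$\Rightarrow$(i) I work throughout with~\eqref{ode_trafo}, whose orbits coincide with those of~\eqref{ode_math}. First, since~(iii) implies condition~(iii) of Theorem~\ref{all_k_local}, for every $k$ the unique equilibrium is asymptotically stable --- in particular Lyapunov stable --- and is a sink. Second, I rule out periodic orbits by the Bendixson--Dulac test with Dulac function $B\equiv 1$: the divergence of the right-hand side of~\eqref{ode_trafo} equals $k_1a_1x^{a_1-1}y^{b_1}-k_4b_3x^{a_3}y^{b_3-1}$, which under~(iii) (hence $a_1\le 0\le b_3$ and $(a_1,b_3)\neq(0,0)$) is strictly negative everywhere on $\R^2_+$. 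Since moreover the only equilibrium is a sink, polycycles are excluded as well, so by the Poincar\'e--Bendixson theorem every forward orbit with compact closure in $\R^2_+$ converges to $(x^*,y^*)$.

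The technical core is the third step: showing that every forward orbit has compact closure in $\R^2_+$, i.e.\ is bounded and bounded away from both axes. When $a_1<0<b_3$ this is comparatively direct, since $\dot x<0$ once $x$ is large (with $y$ controlled) and $\dot x>0$ once $x$ is small, and symmetrically for $\dot y$; thus one controls first one coordinate and then, using that, the other, and assembles a compact forward-invariant region, the remaining corner estimates being supplied by the negative divergence. In the two boundary cases $b_3=0$ and $a_1=0$ the direct sign control on one coordinate is lost, and here the hypotheses $b_1\le-1$, respectively $a_3\le-1$, are used in an essential way: they force a suitable monomial, $x^{a_3}y^{b_3}$ or $x^{a_1}y^{b_1}$, to decrease along solutions near the problematic part of the boundary, which prevents drift. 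Combining this with the second step gives convergence of every solution to $(x^*,y^*)$, and together with Lyapunov stability this is global asymptotic stability. I expect the main obstacle to be precisely this boundary control, where one must handle a coordinate that the flow does not directly push back and where the sharp exponent $-1$ is essential --- the same region being what the counterexamples for~(ii)$\Rightarrow$(iii) exploit.
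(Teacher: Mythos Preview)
Your overall architecture matches the paper's: (i)$\Rightarrow$(ii) trivial; (ii)$\Rightarrow$(iii) via Theorem~\ref{all_k_local} plus explicit counterexamples in the leftover boundary configurations; (iii)$\Rightarrow$(i) via Bendixson--Dulac, then showing every forward orbit has compact closure in $\R^2_+$. One small slip: global asymptotic stability does \emph{not} imply asymptotic stability of the linearization (think $\dot x=-x^3$); what you actually need, and what suffices, is that GAS implies asymptotic stability, after which Theorem~\ref{all_k_local}(ii)$\Leftrightarrow$(iii) applies. Your Dulac choice $B\equiv 1$ is a legitimate simplification of the paper's $h(x,y)=x^{-p}y^{-q}$ (it is the admissible case $p=q=0$), and the divergence of~\eqref{ode_trafo} is indeed strictly negative under~(iii).

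The substantive divergence from the paper is in how the boundary cases $b_3=0$ and $a_1=0$ are handled. For (iii)$\Rightarrow$(i) the paper does \emph{not} argue via a ``monomial decreasing along solutions''; it builds an explicit Lyapunov function $V$ with $\partial_1 V=k_3\big(1-(x/x^*)^{a_3}\big)$ and $\partial_2 V=k_2\big(1-(y/y^*)^{b_1}\big)$, checks $\dot V\le 0$ when $a_3,b_1<0$, and then observes that the sublevel sets of $V$ are bounded and, crucially, disjoint from the $x$-axis precisely when $b_1\le -1$ (since $\int_0 y^{b_1}\,dy$ diverges exactly then), and symmetrically for $a_3\le -1$. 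This is the concrete mechanism behind the threshold $-1$; your sketch does not yet name it, and a bare monomial will not obviously do the job. For (ii)$\Rightarrow$(iii) the paper also does not use monomial barriers $y=cx^{\gamma}$: in the subcase $a_3,b_1>0$ it simply notes that~\eqref{ode_trafo} extends to the relevant axis and points transversally out of the closed quadrant; in the subcase $-1<b_1<0$ (resp.\ $-1<a_3<0$) it solves an auxiliary \emph{separable} ODE, obtained by dropping one term from~\eqref{ode_trafo}, to get an explicit curve through $(x^*,0)$ (resp.\ $(0,y^*)$) that bounds a forward-invariant region meeting the axis. Your monomial-barrier idea is in the spirit of the paper's Proposition~\ref{prop:fwd_inv_sets} and may well be pushed through, but you should be aware that the paper's constructions here are different and more explicit, and your plan would need to treat the two sign regimes of $a_3,b_1$ separately.
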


In preparation of our final result, we provide sufficient conditions for precluding global asymptotic stability.
\begin{prop} \label{prop:fwd_inv_sets}
For the ODE~\eqref{ode_math}, assume $\det C < 0$ and one of the following four conditions.
\begin{enumerate}[$\bullet$]
\item $a_1<0$, $b_1<0$, $a_3<0$, $b_3<0$, and $(0< 1 + b_1 - b_3$ or $\det C > a_3+b_3)$.
\item $a_1<0$, $b_1>0$, $a_3>0$, $b_3<0$, and $\det C > a_3+b_3$.
\item $a_1>0$, $b_1>0$, $a_3>0$, $b_3>0$.
\item $a_1>0$, $b_1<0$, $a_3<0$, $b_3>0$, and $(a_1>1$ or $a_1+b_1>0)$.
\end{enumerate}
Then there exists a solution which does not converge to the unique positive equilibrium.
\end{prop}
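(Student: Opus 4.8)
The plan is to treat the four cases in parallel and, for each parameter vector $k$, to exhibit a nonempty closed set $S = S_k \subseteq \R^2_+$ that is forward invariant under~\eqref{ode_math} and does not contain the equilibrium $(x^*,y^*)$. Since $S_k$ is closed, it is then at positive distance from $(x^*,y^*)$, so any solution starting in $S_k$ stays bounded away from $(x^*,y^*)$ and hence does not converge to it, which is the assertion. It is convenient to pass first to the orbitally equivalent system~\eqref{ode_trafo}, obtained from~\eqref{ode_math} by dividing the right-hand side by $x^{\al_2} y^{\be_2} > 0$: this preserves orbits and their orientation on $\R^2_+$, hence which solutions converge to $(x^*,y^*)$, so it is enough to work with~\eqref{ode_trafo}, for which the equilibrium satisfies $k_1 (x^*)^{a_1}(y^*)^{b_1} = k_2$ and $k_4 (x^*)^{a_3}(y^*)^{b_3} = k_3$. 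By Proposition~\ref{equilibria} the equilibrium ranges over all of $\R^2_+$ as $k$ varies, so $S_k$ must be allowed to depend on $k$; concretely I would take it to be a ``funnel'' reaching either infinity or one of the coordinate axes, which then automatically avoids the interior point $(x^*,y^*)$.

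To build and verify such a funnel I would cut $\R^2_+$ by arcs of the two nullclines $N_x = \{k_1 x^{a_1} y^{b_1} = k_2\}$ and $N_y = \{k_4 x^{a_3} y^{b_3} = k_3\}$ and by arcs of power curves $\Gamma_{c,m} = \{y = c x^m\}$. On $N_x$ the vector field is vertical and its crossing direction is governed by the signs of $\dot y$ and of $b_1$; symmetrically on $N_y$. On $\Gamma_{c,m}$, in logarithmic coordinates $u = \log x$, $v = \log y$, the component transverse to the line $v = m u + \log c$ has the sign of
\begin{align*}
x \dot y - m y \dot x
&= k_3 x - k_4 x^{1+a_3} y^{b_3} - m k_1 x^{a_1} y^{1+b_1} + m k_2 y \\
&= k_3 x - k_4 c^{b_3} x^{1+a_3+m b_3} - m k_1 c^{1+b_1} x^{a_1+m(1+b_1)} + m k_2 c\, x^{m},
\end{align*}
a four-term generalized polynomial in $x$ with exponents $1$, $1+a_3+m b_3$, $a_1+m(1+b_1)$, $m$ and coefficient signs $+$, $-$, $-\sign m$, $+\sign m$. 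For a suitable slope $m$ a single monomial dominates as $x \to \infty$ (or as $x \to 0$) with the sign pointing \emph{into} the funnel, so $\Gamma_{c,m}$ acts there as a one-way barrier. In each of the four sign patterns I would first read off from the signs of $a_1, b_1, a_3, b_3$ which axis or corner of $\R^2_+$ orbits lying far out are funneled toward, then assemble $\partial S_k$ from a nullcline arc and a barrier arc $\Gamma_{c,m}$, choosing $c$ so that $(x^*,y^*)$ falls strictly on the complementary side. One then also sees, from the sign of one of $\dot x$, $\dot y$ on $S_k$, that solutions in $S_k$ are monotone in that coordinate and leave every compact subset of $\R^2_+$.

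The hard part will be the case analysis behind the admissible choices of $m$ and $c$. I expect the extra inequalities attached to the four cases --- $0 < 1 + b_1 - b_3$ or $\det C > a_3 + b_3$ in the first, $\det C > a_3 + b_3$ in the second, $a_1 > 1$ or $a_1 + b_1 > 0$ in the fourth --- to be exactly what guarantees that some admissible slope $m$ makes the dominant monomial of $x \dot y - m y \dot x$ (and of the analogous expressions controlling the nullcline corners) point inward, while simultaneously $c$ can be placed to separate $(x^*,y^*)$ from $S_k$; where two alternatives are offered, they should correspond to two possible shapes of the funnel. Checking that the chosen arcs fit together with the field pointing inward at every corner of $\partial S_k$, and that this can be arranged for every $k \in \R^4_+$, is where essentially all of the computation lies; by contrast, once $S_k$ is in hand, the conclusion is immediate.
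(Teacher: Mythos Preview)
Your plan is correct and matches the paper's approach: in each of the four sign patterns the paper constructs a closed forward invariant set bounded by a power curve $y=x^{\gamma}$ together with a vertical (and sometimes horizontal) segment, and the extra inequalities are precisely what allow an admissible slope $\gamma$ so that the dominant monomial in your expression for $x\dot y - m y \dot x$ has the right sign at $x\to\infty$ or $x\to 0$. The paper first normalizes to $k_1=k_2=k_3=k_4=1$ (hence $(x^*,y^*)=(1,1)$), which removes your parameter $c$ and replaces the nullcline arcs you mention by straight segments $x=x_0$, $y=x_0^{\gamma}$; otherwise the four Lemmas carrying out the case analysis do exactly the computation you outlined.
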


Finally, we characterize (global) asymptotic stability of the unique positive equilibrium for given $k$.
To reduce the complexity of the problem,
we analyze the special case
$\al_1 = \al$, $\be_1 = 0$, $\al_2 = 1$, $\be_2 = \be$, $\al_3 = 0$, $\be_3 = 1$ with $\al, \be \in \R$.
That is, we consider the ODE
\begin{align}\label{eq:ode_two_exponents_general_k}
\dot x &= k_1 \, x^\al  - k_2 \, x y^\be , \\
\dot y &= k_3 \, x y^\be - k_4 \, y.\nonumber
\end{align}
In terms of the underlying chemical reaction network~\eqref{network},
we study two consecutive autocatalytic reactions with kinetic orders $\al$ and $\be$, respectively,
and a degradation reaction.
To ease the notation, we further assume $k_1 = k_2 = k_3 = k_4 = 1$.

\begin{thm}\label{thm:alpha_beta_system_stability}
For the ODE~\eqref{eq:ode_two_exponents_general_k},
assume $\al \be - \al + 1 \neq 0$ and $k_1 = k_2 = k_3 = k_4 = 1$. 
Then the following statements hold.
\begin{enumerate}[{\rm(i)}]
\item
There exists a unique positive equilibrium, namely $(1,1)$,
and the Jacobian matrix $J$ at $(1,1)$ is given by
\[
J =
\begin{pmatrix}
\al-1 & -\be \\
1     &  \be-1
\end{pmatrix}.
\]
\item
The equilibrium $(1,1)$ is asymptotically stable if and only if
\[
\al \be - \al + 1 > 0, \, \al + \be \leq 2, \, \text{and } (\al,\be) \neq (1,1).
\]
\item
If $\al \be - \al + 1 > 0$, $\al + \be = 2$, and $(\al,\be)\neq(1,1)$,
then the equilibrium $(1,1)$ is asymptotically stable and undergoes a supercritical Andronov-Hopf bifurcation.
\item 
If $(\al,\be)=(1,1)$, then the equilibrium $(1,1)$ is a center,
and all solutions in the positive quadrant are periodic.
\item
The equilibrium $(1,1)$ is globally asymptotically stable if and only if $\al \be - \al + 1 > 0$ and either
\begin{enumerate}[$\bullet$]
\item $\al \leq 1$, $\be \leq 1$, and $(\al,\be)\neq(1,1)$, or
\item $1 < \al \leq \frac{3}{2}$ and $\al-1 \leq \be \leq 2-\al$.
\end{enumerate}
\end{enumerate}
\end{thm}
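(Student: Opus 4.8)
\emph{Proof plan.}
I would dispatch parts~(i)--(iv) by specializing the general theory of Section~\ref{sec:main_results} and then concentrate on the characterization~(v). Throughout, for the exponents in~\eqref{eq:ode_two_exponents_general_k} the matrix~\eqref{eq:C_mtx} has entries $a_1=\al-1$, $b_1=-\be$, $a_3=-1$, $b_3=1-\be$, so that $\det C=-(\al\be-\al+1)$ and the standing hypothesis $\al\be-\al+1\neq0$ is exactly $\det C\neq0$. For~(i), setting the right-hand sides of~\eqref{eq:ode_two_exponents_general_k} to zero gives $x^{\al-1}=y^\be$ and $xy^{\be-1}=1$, hence $x^{\,\al\be-\al+1}=1$ and therefore $x=y=1$; the Jacobian follows by differentiation (or from Proposition~\ref{fix_k_local}(i) with $x^*=y^*=1$). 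For~(ii) one reads off $\tr J=\al+\be-2$ and $\det J=\al\be-\al+1$: linearization settles asymptotic stability when $\al\be-\al+1>0$, $\al+\be<2$ and instability when $\al\be-\al+1<0$ or $\al+\be>2$; the remaining line $\al+\be=2$, $(\al,\be)\neq(1,1)$ is handled by~(iii) and the point $(1,1)$ by~(iv). For~(iii) I would apply Proposition~\ref{hopf}: here $\det C<0$ and $a_1\tfrac{k_2}{x^*}-b_3\tfrac{k_3}{y^*}=a_1-b_3=\al+\be-2=0$, and a short substitution gives $d_1=(\al-1)\be(\be-1)$, which on $\al+\be=2$ equals $-(\al-1)^2\be<0$ (since $(\al,\be)\neq(1,1)$ forces $\al\neq1$ and $\al\be-\al+1>0$ forces $\be>0$ on that line), so Proposition~\ref{hopf} yields asymptotic stability and a supercritical bifurcation. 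For~(iv), at $(\al,\be)=(1,1)$ the system~\eqref{eq:ode_two_exponents_general_k} is the classical Lotka--Volterra system~\eqref{ode_classical_LV} (equivalently $a_1=b_3=0$, $a_3=b_1=-1$ in the remark after Proposition~\ref{hopf}), with strictly convex first integral $H(x,y)=x-\ln x+y-\ln y$, so every positive orbit is a closed level set of $H$.

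For the necessity part of~(v), global asymptotic stability implies asymptotic stability, so by~(ii) we already have $\al\be-\al+1>0$, $\al+\be\le2$ and $(\al,\be)\neq(1,1)$. If in addition $(\al,\be)$ lies in neither listed region, then either $\al\le1$ and $\be>1$, or $\al>1$ and $\be<\al-1$. In the first case $a_1,b_1,a_3,b_3<0$, $1+b_1-b_3=0$ and $\det C-(a_3+b_3)=-(\al-1)(\be-1)>0$, so the first bullet of Proposition~\ref{prop:fwd_inv_sets} applies; in the second case $\be>0$ (from $\al\be-\al+1>0$) and $\al<2$ (from $\al+\be\le2$), so $a_1>0$, $b_1<0$, $a_3<0$, $b_3>0$ and $a_1+b_1=\al-1-\be>0$, so the fourth bullet applies. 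Either way there is a solution not converging to $(1,1)$, a contradiction.

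For the sufficiency part of~(v), I would first observe that the region $\al\le1$, $\be\le1$, $(\al,\be)\neq(1,1)$, $\al\be-\al+1>0$ coincides, for these exponents, with condition~(iii) of Theorem~\ref{all_k_global} (the three subcases there become $\al<1,\be<1$; $\al<1,\be=1$; $\al=1,0<\be<1$), so global asymptotic stability is immediate from Theorem~\ref{all_k_global}. It remains to treat $1<\al\le\tfrac32$, $\al-1\le\be\le2-\al$ (which forces $0<\be<1$). Here I would rule out periodic orbits by a Dulac function $B(x,y)=x^py^q$: writing $\dive(Bf)$ as a combination of the four monomials with exponent vectors $(p+\al-1,q)$, $(p,q+\be)$, $(p+1,q+\be-1)$, $(p,q)$, only the first has a positive coefficient, $p+\al$, and its exponent vector is a convex combination of the other three precisely when $\al-1\le\be$; dominating that monomial by the weighted arithmetic--geometric-mean inequality reduces $\dive(Bf)\le0$ to the bounds $\tfrac{\be(\al-1)}{\al\be-\al+1}\le p+\al\le\tfrac{\be(1-\be)}{\al\be-\al+1}$ together with a nonempty interval for $q$, and these bounds are compatible exactly because $\al+\be\le2$. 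Thus a valid, non-degenerate Dulac function exists and no periodic orbit occurs.

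Next I would rule out unbounded solutions and solutions approaching $\partial\R^2_+$: from the configuration of the nullclines $y=x^{(\al-1)/\be}$ and $y=x^{1/(1-\be)}$, an orbit cannot escape to $x=\infty$ (it is turned back by the first nullcline, on which $\dot x$ changes sign), hence $y$ stays bounded as well, and, using the strict negativity of $\dive(Bf)$, an orbit winding around $(1,1)$ does so with strictly decreasing enclosed $B$-area, which prevents accumulation on the boundary. Finally Poincar\'e--Bendixson, together with asymptotic stability of $(1,1)$ from~(ii)--(iii), forces every positive solution to converge to $(1,1)$, which is global asymptotic stability. The main obstacle, I expect, is precisely this last region: constructing the Dulac function in the regime $a_1=\al-1>0$, which lies outside the range covered by the general stability analysis, and then carefully excluding unbounded and boundary-approaching solutions there.
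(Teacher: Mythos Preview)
Your treatment of (i)--(iv), of the necessity half of (v), and of the first bullet in the sufficiency half of (v) is correct and essentially matches the paper's argument; your computation of $d_1=(\al-1)\be(\be-1)$ agrees with the paper's $d_1=(\al-1)^2(\al-2)$ after substituting $\be=2-\al$.

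For the triangle $1<\al\le\tfrac32$, $\al-1\le\be\le2-\al$, your Dulac argument via weighted AM--GM is correct and genuinely different from the paper's. The paper chooses the specific exponents $p=\al-\tfrac{(\al-1)\be}{\det J}$, $q=\be+\tfrac{(\al-1)^2\be}{\det J}$ so that $(1,1)$ is a critical point of $v=\dive(hf,hg)/h$, and then shows by a one-variable reduction that $(1,1)$ is the global maximum of $v$. Your approach instead exploits that the exponent vector $(p+\al-1,q)$ lies in the convex hull of the other three (which is exactly the condition $\be\ge\al-1$), so AM--GM bounds the single positive monomial by a combination of the negative ones; this is more systematic and yields the range $\tfrac{\be(\al-1)}{\det J}\le p+\al\le\tfrac{\be(1-\be)}{\det J}$ directly, nonempty precisely when $\al+\be\le2$. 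Both routes give a valid Dulac function.

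The gap is in your boundedness and boundary arguments for the triangle. The sentence ``an orbit cannot escape to $x=\infty$ (it is turned back by the first nullcline, on which $\dot x$ changes sign)'' does not follow: below the $x$-nullcline with $x>1$ one has $\dot x>0$ and $\dot y>0$, and nothing prevents the orbit from escaping to infinity while remaining below the nullcline. The paper handles this (Lemma~\ref{lemma:triangle_no_escape}) by constructing, for each such starting point, a scaled $y$-nullcline $y=cx^{1/(1-\be)}$ with $c$ small, showing the vector field points upward across it, and checking that this curve meets the $x$-nullcline; this forces the orbit to cross the $x$-nullcline. Likewise, your $B$-area argument (``strictly decreasing enclosed $B$-area prevents accumulation on the boundary'') only controls orbits that actually wind around $(1,1)$; it says nothing about an orbit in the region above both nullclines with $0<x<1$, where $\dot x<0$, $\dot y<0$ and the orbit could in principle head monotonically toward the origin. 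The paper rules this out (Lemma~\ref{lemma:triangle_no_approach_origin}) with another comparison curve $y=cx^\gamma$, $\gamma>(\al-1)/\be$, across which the field points downward and which meets the $x$-nullcline. You correctly anticipated that this last region is the obstacle; what is missing is precisely these two comparison-curve constructions.
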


Figure~\ref{fig:stability_diagram_fixk} illustrates our main results,
in particular, the stability properties of the unique positive equilibrium of the ODE~\eqref{eq:ode_two_exponents_general_k}.
We depict regions in the ($\al$,$\be$)-plane
which result in stable or unstable behavior
either for all parameters $k$ or for $k_1 = k_2 = k_3 = k_4 = 1$.

\begin{figure}[h!]
\begin{center}
\includegraphics[scale=0.78]{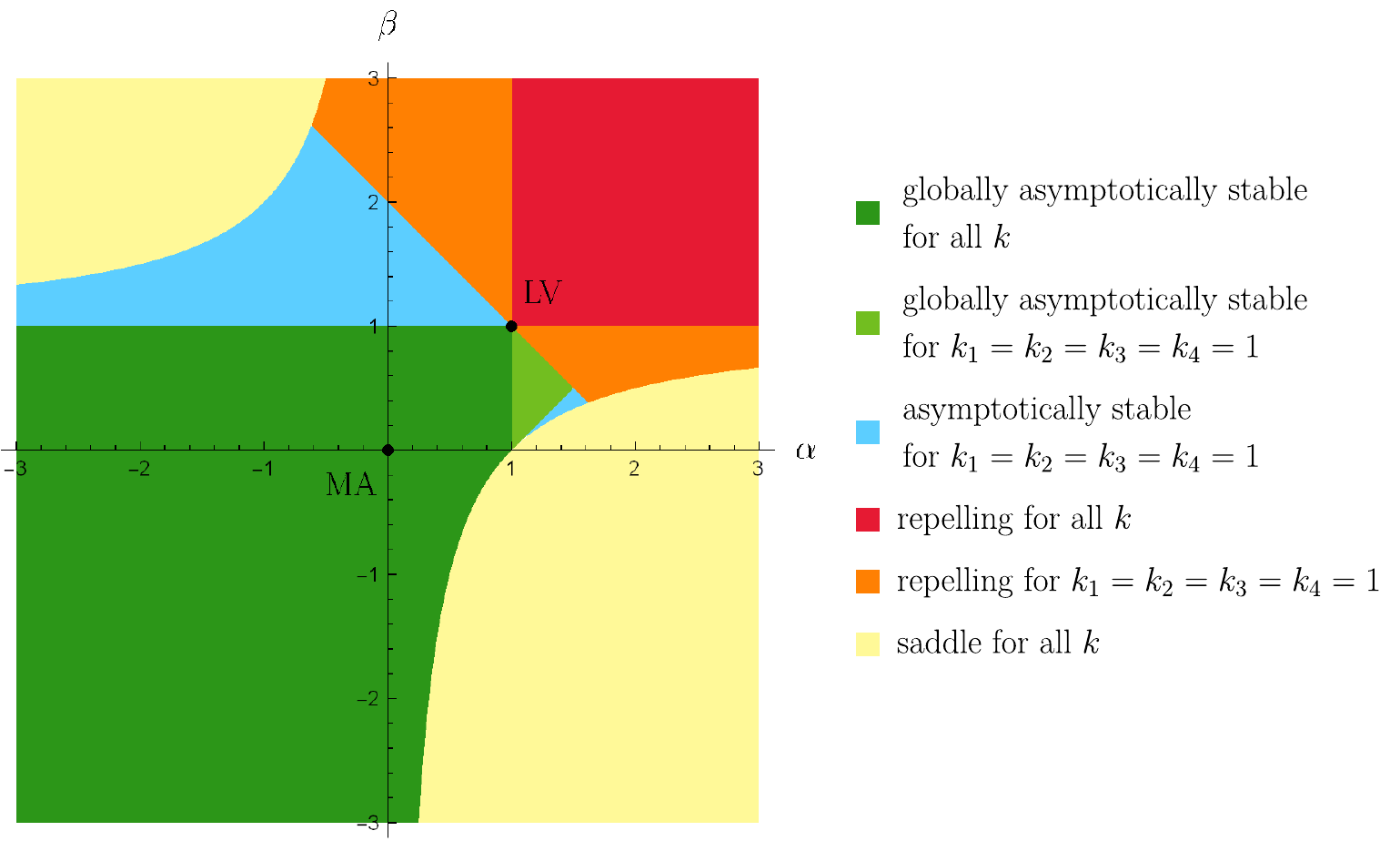}
\end{center}
\caption{Stability of the unique positive equilibrium of the ODE~\eqref{eq:ode_two_exponents_general_k}
depending on the exponents $(\al,\be)$.
The special case $(\al,\be)=(1,1)$ is a classical Lotka-Volterra system,
while $(\al,\be)=(0,0)$ is a linear mass-action system.}
\label{fig:stability_diagram_fixk}
\end{figure}


\section{Proofs} \label{sec:proofs_of_main_results}

It remains to prove the results presented in Section \ref{sec:main_results}.

\subsection{Number of equilibria: Proposition~\ref{equilibria}}

Using the notation \eqref{eq:C_mtx}, a positive equilibrium $(x,y) \in \R^2_+$ of the ODE~\eqref{ode_math} is determined by
\[
x^{a_1}y^{b_1}=\frac{k_2}{k_1} \quad \text{and} \quad x^{a_3}y^{b_3}=\frac{k_3}{k_4}
\]
or, equivalently, by
\[
\begin{pmatrix}
a_1 & b_1 \\
a_3 & b_3
\end{pmatrix}
\begin{pmatrix}
\ln x \\
\ln y
\end{pmatrix}=
\begin{pmatrix}
\ln\frac{k_2}{k_1} \\
\ln\frac{k_3}{k_4}
\end{pmatrix} .
\]
If $\det C \neq 0$, there exists a unique positive equilibrium, and a short calculation shows that it is given by Equations~\eqref{eq:x_star_y_star}.
If $\det C = 0$, the set of equilibria is either infinite or empty
depending on whether the vector $(\ln\frac{k_2}{k_1},\ln\frac{k_3}{k_4})^\mathsf{T}$ is in the image of $C$ or not.
This concludes the proof of Proposition~\ref{equilibria}.


\subsection{Asymptotic stability of the unique positive equilibrium}
\label{subsec:local_stability}

We prove Proposition~\ref{fix_k_local}, compute the first focal value stated in Proposition~\ref{hopf},
discuss the special case $(a_1,b_3)=(0,0)$, and prove Theorem~\ref{all_k_local}.
See the corresponding Subsections \ref{subsubsec:jacobian}, \ref{sec:strudel}, \ref{subsubsec:first_integral}, and \ref{subsubsec:proof_all_k_local}.

\subsubsection{The Jacobian matrix at the unique positive equilibrium: Proposition~\ref{fix_k_local}} \label{subsubsec:jacobian}

Assume $\det C \neq 0$ for the ODE~\eqref{ode_math}.
The Jacobian matrix $J$ at the unique positive equilibrium $(x^*,y^*)$ is given by
\begin{align*}
J_{11} &= k_1 \al_1 (x^*)^{\al_1-1} (y^*)^{\be_1}   - k_2 \al_2 (x^*)^{\al_2-1} (y^*)^{\be_2}, \\
J_{12} &= k_1 \be_1 (x^*)^{\al_1}   (y^*)^{\be_1-1} - k_2 \be_2 (x^*)^{\al_2}   (y^*)^{\be_2-1}, \\
J_{21} &= k_3 \al_2 (x^*)^{\al_2-1} (y^*)^{\be_2}   - k_4 \al_3 (x^*)^{\al_3-1} (y^*)^{\be_3}, \\
J_{22} &= k_3 \be_2 (x^*)^{\al_2}   (y^*)^{\be_2-1} - k_4 \be_3 (x^*)^{\al_3}   (y^*)^{\be_3-1}.
\end{align*}
Using the equilibrium equations $k_1(x^*)^{a_1} (y^*)^{b_1} = k_2$ and $k_3 = k_4 (x^*)^{a_3} (y^*)^{b_3}$, we obtain
\begin{align*}
J=(x^*)^{\al_2} (y^*)^{\be_2}
\begin{pmatrix}
a_1\frac{k_2}{x^*} & b_1\frac{k_2}{y^*} \\
-a_3\frac{k_3}{x^*} & -b_3\frac{k_3}{y^*}
\end{pmatrix}.
\end{align*}
The factorization
\begin{align*}
J=(x^*)^{\al_2} (y^*)^{\be_2} 
\begin{pmatrix}
k_2 & 0 \\
0 & k_3
\end{pmatrix}
\begin{pmatrix}
a_1 & b_1 \\
-a_3 & -b_3
\end{pmatrix}
\begin{pmatrix}
\textstyle \frac{1}{x^*} & 0 \\
0 & \textstyle \frac{1}{y^*}
\end{pmatrix}
\end{align*}
follows directly.
This concludes the proof of Proposition~\ref{fix_k_local} \rm{(i)} which immediately implies Proposition~\ref{fix_k_local} \rm{(ii)}.

\subsubsection{First focal value: Proposition~\ref{hopf}} \label{sec:strudel}

We assume $\det C \neq 0$ and use the unique positive equilibrium $(x^*,y^*)$ to scale the ODE~\eqref{ode_trafo}.
We introduce $K = \frac{k_3}{k_2} \, \frac{x^*}{y^*}$
and obtain the equivalent ODE
\begin{align*}
\dot x &= f(x,y) = x^{a_1} y^{b_1} - 1 , \\
\dot y &= g(x,y) = K \left( 1 - x^{a_3} y^{b_3} \right) , \nonumber
\end{align*}
which has the unique positive equilibrium $(1,1)$.
If the equilibrium undergoes an Andronov-Hopf bifurcation,
then the determinant of the Jacobian matrix is positive (that is, $\det C < 0$)
and its trace is zero (that is, $a_1=K\,b_3$).
In order to determine the sign of the first focal value $D_1$,
we use Equation~\eqref{strudel} from Appendix A.
In particular, we compute the partial derivatives of $f$ and $g$ up to order three at $(1,1)$.
The resulting first focal value $D_1$ has the same sign as the expression
\[
d_1 = a_3 [(1 + a_3 - a_1) b_1 b_3 - a_1 a_3 (1 + b_1 - b_3)] .
\]

In~\cite{dancso:farkas:farkas:szabo:1991},
the authors study the ODE~\eqref{ode_math} in the form \eqref{ode_dancso}.
In particular, they compute the first focal value.
However, Equation~(22) in~\cite{dancso:farkas:farkas:szabo:1991} is not correct.
In their notation, the correct formula is given by
\[
G = p[p(\hat p-p)(\hat q-1) - q(\hat q-q)(\hat p-1)] .
\]

We also correct the three lemmas in the Appendix of~\cite{dancso:farkas:farkas:szabo:1991}.
\begin{lem}[Lemma 1 in \cite{dancso:farkas:farkas:szabo:1991}]
Consider the system
\[
\dot z_1 = A (z_1^{a_1}-z_2^{a_2}), \quad \dot z_2 = B (z_1^{b_1}-z_2^{b_2}),
\]
where $\det J = AB(-a_1b_2+b_1a_2)>0$ and $\tr J = Aa_1-Bb_2 = 0$.
At the equilibrium $(1,1)$,
the first focal value has the same sign as
\[
g = A b_1 [(1 + b_1 - a_1) a_2 b_2 - a_1 b_1 (1 + a_2 - b_2)] .
\]
\end{lem}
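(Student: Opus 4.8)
The plan is to compute the first focal value directly from the formula in Appendix~A, in complete analogy with the computation of $d_1$ carried out in Subsection~\ref{sec:strudel}. Write $f(z_1,z_2) = A(z_1^{a_1} - z_2^{a_2})$ and $g(z_1,z_2) = B(z_1^{b_1} - z_2^{b_2})$ for the right-hand sides. The Jacobian at the equilibrium $(1,1)$ is
\[
J = \begin{pmatrix} A a_1 & -A a_2 \\ B b_1 & -B b_2 \end{pmatrix},
\]
so the standing hypotheses read $\tr J = A a_1 - B b_2 = 0$ and $\det J = AB(a_2 b_1 - a_1 b_2) > 0$; in particular $(1,1)$ is a weak focus, and Equation~\eqref{strudel} expresses its first focal value as an explicit expression in the first, second, and third partial derivatives of $f$ and $g$ at $(1,1)$.

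First I would record those derivatives. Since $\tfrac{\mathrm{d}^j}{\mathrm{d}z^j} z^c\big|_{z=1} = c(c-1)\cdots(c-j+1)$, and since $f$ and $g$ are each the difference of a pure power of $z_1$ and a pure power of $z_2$, every \emph{mixed} partial derivative of $f$ and of $g$ vanishes at $(1,1)$, while the surviving pure derivatives are the falling factorials of the exponents times $\pm A$ or $\pm B$, for instance $f_{z_1 z_1} = A a_1(a_1-1)$ and $f_{z_2 z_2 z_2} = -A a_2(a_2-1)(a_2-2)$. This is the main structural simplification: it makes every term of Equation~\eqref{strudel} that involves a mixed partial derivative of $f$ or $g$ — both the second-order products such as $f_{z_1 z_2}(f_{z_1 z_1}+f_{z_2 z_2})$ and the third-order contributions $f_{z_1 z_1 z_2}$, $f_{z_1 z_2 z_2}$ and their $g$-analogues — drop out.

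Next I would substitute the remaining derivatives into Equation~\eqref{strudel} and simplify, using two facts: the linear and quadratic prefactors of the formula (built from $\det J>0$ and the entries of $J$) combine into a positive common factor, and the trace condition $A a_1 = B b_2$ can be used to eliminate $b_2$ (or, symmetrically, $a_1$) throughout. Collecting terms then leaves the first focal value as a positive multiple of
\[
A b_1\bigl[(1 + b_1 - a_1)\,a_2 b_2 - a_1 b_1\,(1 + a_2 - b_2)\bigr],
\]
which is precisely $g$; this is exactly the assertion of the lemma.

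The main obstacle is purely the bookkeeping in this last step: even after every mixed partial has disappeared, Equation~\eqref{strudel} for a general (non-skew-symmetric) linear part still has enough terms that one must organize the computation by degree in the exponents and apply $A a_1 = B b_2$ repeatedly in order to collapse it to the compact bracket above. As a sanity check one may note that, under the formal relabeling $a_1 \mapsto a_1$, $a_3 \mapsto b_1$, $b_1 \mapsto a_2$, $b_3 \mapsto b_2$, the expression $d_1$ from Subsection~\ref{sec:strudel} is taken to $b_1[(1 + b_1 - a_1)a_2 b_2 - a_1 b_1(1 + a_2 - b_2)]$, so that $g$ equals $A$ times this image; since $A>0$, the two focal-value formulas are two instances of the same polynomial identity, which is a useful check on the final simplification.
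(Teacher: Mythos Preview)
Your approach is exactly the paper's: the lemma is stated in Subsection~\ref{sec:strudel} as a correction without an explicit proof, the implicit method being to plug the partial derivatives of $f$ and $g$ at $(1,1)$ into Equation~\eqref{strudel}, just as was done for $d_1$. Your key structural observation---that every mixed partial of $f$ and $g$ vanishes because each is a difference of pure powers---is correct and is what makes the bookkeeping manageable.

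One caveat on your closing paragraph. The formal relabeling $a_1\mapsto a_1$, $a_3\mapsto b_1$, $b_1\mapsto a_2$, $b_3\mapsto b_2$ does carry $d_1$ to $g/A$, but this is not ``the same polynomial identity'': the system in Subsection~\ref{sec:strudel} has monomials $x^{a_1}y^{b_1}$ with non-vanishing mixed partials, whereas here all mixed partials are zero, so the two computations through Equation~\eqref{strudel} pass through genuinely different sets of nonzero terms. The agreement is a consistency check, not a derivation. Also, $A>0$ is not among the hypotheses of the lemma (only $\det J>0$ and $\tr J=0$ are assumed), so you should not invoke it; the factor $A$ in $g$ is there precisely because the sign of the focal value can depend on the sign of $A$.
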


\begin{lem}[Lemma 2 in \cite{dancso:farkas:farkas:szabo:1991}]
Consider the system
\[
\dot z_1 = A (\e^{a_1z_1}-z_2^{a_2}), \quad \dot z_2 = B (e^{b_1z_1}-z_2^{b_2}),
\]
where $\det J = AB(-a_1b_2+b_1a_2)>0$ and $\tr J = Aa_1-Bb_2 = 0$.
At the equilibrium $(0,1)$,
the first focal value has the same sign as
\[
g = -A a_1 (1 + a_2 - b_2) .
\]
\end{lem}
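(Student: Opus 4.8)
The plan is to compute the first focal value directly from Equation~\eqref{strudel} of Appendix~A, exactly as in the proof of Proposition~\ref{hopf}. First I would translate the equilibrium $(0,1)$ to the origin by setting $z_1 = u$ and $z_2 = 1+v$, which turns the system into $\dot u = f(u,v) = A(e^{a_1 u} - (1+v)^{a_2})$ and $\dot v = g(u,v) = B(e^{b_1 u} - (1+v)^{b_2})$ with equilibrium at $(0,0)$. A direct differentiation reproduces the stated linear part $J$, so that under the hypotheses $\tr J = A a_1 - B b_2 = 0$ and $\det J = AB(a_2 b_1 - a_1 b_2) > 0$ the eigenvalues are $\pm i\omega$ with $\omega^2 = \det J$, and the Andronov--Hopf analysis via \eqref{strudel} applies. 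The formula requires the partial derivatives of $f$ and $g$ up to order three at the origin.

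The key simplification is structural: each right-hand side is a difference of a function of $u$ alone and a function of $v$ alone, so every mixed partial derivative vanishes, $f_{uv} = f_{uuv} = f_{uvv} = 0$, and likewise for $g$. Hence only the pure derivatives survive, namely $f_u = A a_1$, $f_{uu} = A a_1^2$, $f_{uuu} = A a_1^3$, $f_v = -A a_2$, $f_{vv} = -A a_2(a_2-1)$, $f_{vvv} = -A a_2(a_2-1)(a_2-2)$, together with the analogous $g$-derivatives obtained by replacing $(a_1,a_2)$ with $(b_1,b_2)$ and the prefactor $A$ with $B$. I would substitute these into \eqref{strudel} (possibly after the preliminary linear normalization bringing $J$ to the canonical rotated form $\begin{pmatrix} 0 & -\omega \\ \omega & 0 \end{pmatrix}$); because all mixed partials are zero, the bulk of the cross terms in the focal-value formula drop out, leaving contributions only from the pure cubic terms $f_{uuu}, f_{vvv}, g_{uuu}, g_{vvv}$ and from products such as $f_{uu} g_{uu}$ and $f_{vv} g_{vv}$.

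Finally I would impose the trace relation $A a_1 = B b_2$ to eliminate one parameter and collect terms. After factoring out the overall positive quantity (appropriate powers of $\omega$, $A$, and $B$, all positive), the expression should collapse to a constant multiple of $-A a_1(1 + a_2 - b_2)$, which yields the claimed sign.

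The main obstacle is precisely this last collapse: the focal-value formula produces a polynomial in $a_1, a_2, b_1, b_2, A, B$, and showing that -- after using $A a_1 = B b_2$ and $\omega^2 = AB(a_2 b_1 - a_1 b_2)$ -- it reduces to the single linear factor $(1 + a_2 - b_2)$ requires careful bookkeeping of signs and of the falling-factorial coefficients $a_2(a_2-1)(a_2-2)$ and $b_2(b_2-1)(b_2-2)$. Since the purpose of this subsection is to \emph{correct} the focal-value computations of Dancs\'o et al., a sign or coefficient slip here is the principal risk, so the cancellation should be verified symbolically.
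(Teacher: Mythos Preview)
Your proposal is correct and matches the paper's approach: the paper does not give a separate proof of this lemma but simply lists it (together with its two companions) as a correction obtained by the same method used for Proposition~\ref{hopf}, namely substituting the partial derivatives into formula~\eqref{strudel}. One small remark: you need not perform the preliminary linear normalization to the rotational form, since~\eqref{strudel} is already written for a general Jacobian $\begin{pmatrix} a & b \\ c & -a \end{pmatrix}$; you can plug the derivatives directly with $a = A a_1$, $b = -A a_2$, $c = B b_1$.
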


\begin{lem}[Lemma 3 in \cite{dancso:farkas:farkas:szabo:1991}]
Consider the system
\[
\dot z_1 = A (\e^{a_1z_1}-\e^{a_2z_2}), \quad \dot z_2 = B (e^{b_1z_1}-\e^{b_2z_2}),
\]
where $\det J = AB(-a_1b_2+b_1a_2)>0$ and $\tr J = Aa_1-Bb_2 = 0$.
At the equilibrium $(0,0)$,
the first focal value is zero.
\end{lem}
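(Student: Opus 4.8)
The plan is to read off the (very few nonzero) partial derivatives of the right-hand sides at $(0,0)$, substitute them into the first-focal-value formula of Appendix~A (Equation~\eqref{strudel}), and then show that on the locus $Aa_1=Bb_2$, $AB(a_2b_1-a_1b_2)>0$ the resulting expression vanishes identically.

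Since each equation is a difference of a function of $z_1$ alone and a function of $z_2$ alone, the Taylor data at the origin are immediate: for $f(z_1,z_2)=A(\e^{a_1z_1}-\e^{a_2z_2})$ one has $\partial_{z_1}^{\,j}f=Aa_1^{\,j}$ and $\partial_{z_2}^{\,j}f=-Aa_2^{\,j}$ at $(0,0)$ for $j\ge 1$, every genuinely mixed derivative $\partial_{z_1}^{\,i}\partial_{z_2}^{\,j}f$ with $i,j\ge 1$ vanishes, and likewise for $g(z_1,z_2)=B(\e^{b_1z_1}-\e^{b_2z_2})$. In particular $\tr J=Aa_1-Bb_2=0$, so I set $t:=Aa_1=Bb_2$, and $\det J=AB(a_2b_1-a_1b_2)=:\omega^2>0$. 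To make the cancellation transparent I would first normalise the linear part by the linear change $u:=z_1$, $v:=-\tfrac1\omega\,(t\,z_1-Aa_2z_2)$ — admissible because $a_2\neq 0$ (else $\det J=-t^2\le 0$) — which turns the system into $\dot u=-\omega v+F(u,v)$, $\dot v=\omega u+G(u,v)$ with, writing $L:=t\,u+\omega v$ and $\psi(s):=\e^s-1-s$, $F=A\,\psi(a_1u)-A\,\psi(L/A)$ and $G=-\tfrac1\omega\bigl[tA\,\psi(a_1u)-tA\,\psi(L/A)-ABa_2\,\psi(b_1u)+ABa_2\,\psi(\tfrac{b_2}{Aa_2}L)\bigr]$.

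The key point is that $Aa_1=t$ makes the $u^2$- and $u^3$-coefficients of $F$ cancel, so $F_{uu}(0)=F_{uuu}(0)=0$. Reading off the remaining quadratic and cubic coefficients of $F$ and $G$, introducing $\gamma:=\tfrac{t(b_2-a_2)}{Aa_2}$ and $\delta:=t^2a_1-(t^2+\omega^2)b_1$, and using the determinant relation in the form $ABa_2b_1=t^2+\omega^2$ (which lets one eliminate $B$ and $b_2$ via $B=\tfrac{t^2+\omega^2}{Aa_2b_1}$, $b_2=\tfrac{tAa_2b_1}{t^2+\omega^2}$), the first-focal-value formula collapses — up to a positive factor — to $-\gamma\,t$ times the bracket $\bigl(\omega^2b_1+\delta\bigr)+\bigl(\tfrac{t\omega^2}{A}+\gamma(t^2+\omega^2)\bigr)$. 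Grouping as indicated, the first summand equals $t^2(a_1-b_1)$ and the second equals $t^2(b_1-a_1)$ (each a short algebra step, using $Aa_1=t$), so the bracket is $0$ and the first focal value vanishes. This proves the lemma; the degenerate case $t=0$, i.e.\ $a_1=b_2=0$, is covered as well since then $\gamma t=0$ outright.

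I expect the only real difficulty to be bookkeeping: keeping the normalisation of Equation~\eqref{strudel} consistent with the normal form above, and arranging the third-order Taylor coefficients through $\gamma$ and $\delta$ so that the two halves of the bracket visibly cancel instead of disappearing inside a large polynomial. There is no conceptual obstacle — the content of the lemma is exactly this algebraic identity.
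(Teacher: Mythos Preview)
Your argument is correct. The paper does not spell out a proof of this lemma --- it is stated as a correction to Dancs\'o et al., with the implicit method being exactly what you announce in your first sentence: substitute the partial derivatives of $f,g$ at $(0,0)$ directly into Equation~\eqref{strudel}. You then take a detour, normalising the linear part yourself and working in the transformed coordinates, which amounts to using~\eqref{GH} rather than~\eqref{strudel} and duplicates the work the latter formula already encodes. Plugging straight into~\eqref{strudel} with $a=t=Aa_1=Bb_2$, $b=-Aa_2$, $c=Bb_1$, all mixed partials zero, and $f_{20}=t^2/A$, $f_{02}=-Aa_2^2$, $f_{30}=t^3/A^2$, $g_{20}=Bb_1^2$, $g_{02}=-t^2/B$, $g_{03}=-t^3/B^2$, the brace in~\eqref{strudel} factors in a few lines as
\[
t^3(Ab_1-Ba_2)\Bigl[\tfrac{t^2+\omega^2}{AB}-a_2b_1\Bigr],
\]
which vanishes by $ABa_2b_1=t^2+\omega^2$. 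Your route via $\gamma,\delta$ reaches the same cancellation, and the two halves of your bracket indeed equal $t^2(a_1-b_1)$ and $t^2(b_1-a_1)$ as claimed; the direct substitution is just shorter.
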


We note that an expression for the first focal value has been derived for planar S-systems, see~\cite{yin:voit:2008}.

\subsubsection{The special case $(a_1,b_3)=(0,0)$}
\label{subsubsec:first_integral}

We describe the behavior of the ODE~\eqref{ode_math} for $(a_1,b_3)=(0,0)$. 
In particular, we find a first integral, see also~\cite{farkas:noszticzius:1985}.

In the special case $(a_1,b_3)=(0,0)$, the ODE \eqref{ode_math} takes the form
\begin{align} \label{ode_a1_0_b3_0}
\dot x &= x^{\al_2}y^{\be_2} (k_1 \, y^{b_1} - k_2) , \\
\dot y &= x^{\al_2}y^{\be_2} (k_3 \, - k_4 \, x^{a_3}) . \nonumber
\end{align}

If $a_3b_1<0$, then $\det C > 0$ and the unique positive equilibrium is a saddle.
If $a_3b_1>0$, we show that the unique positive equilibrium $(x^*,y^*)$ is a center.
Let $V \colon \R^2_+ \to \R$ be a continuously differentiable function with 
\begin{align} \label{lyapunov}
(\partial_1V)(x,y) &= k_3 \bigg( 1 - \Big(\frac{x}{x^*}\Big)^{a_3} \bigg),\\
(\partial_2V)(x,y) &= k_2 \bigg( 1 - \Big(\frac{y}{y^*}\Big)^{b_1} \bigg). \nonumber
\end{align}
A short calculation shows that $V$ is a first integral for \eqref{ode_a1_0_b3_0}.
Since level sets of $V$ that are close enough to $(x^*,y^*)$ are closed curves, we obtain that $(x^*,y^*)$ is indeed a center.

We can also deduce the global behavior for $(a_1,b_3)=(0,0)$ and $a_3b_1 > 0$.
Recall that we defined both the ODE~\eqref{ode_math} and the function $V$ on the positive quadrant.
If $a_3 \leq -1$ and $b_1 \leq -1$ ,
then $V(x,y)$ approaches infinity whenever $x$ or $y$ approaches zero or infinity.
Hence, all level sets of $V$ are closed curves,
see Figure~\ref{fig:streamplot_center_stay_away_from_the_boundary} for the corresponding phase portrait.
On the other hand, if at least one of $a_3 > -1$ and $b_1 > -1$ holds,
then some level sets intersect the boundary of the positive quadrant.
Hence, the level sets of $V$ that are close enough to $(x^*,y^*)$ are still closed curves,
but some level sets connect two points on the boundary (or touch the boundary in one point).
In Figure~\ref{fig:streamplots_center_meet_the_boundary},
we have collected the phase portraits for four cases where at least one of $a_3 \le -1$ or $b_1 \le -1$ is violated.

Solutions approaching the boundary forward (respectively, backward) in time either
\begin{enumerate}[$\bullet$]
\item reach the boundary in finite time, and thus these solutions are not defined for all positive (respectively, negative) times or
\item  stay in the positive quadrant for all positive (respectively, negative) times.
\end{enumerate}
Which of the two cases occurs is not determined solely by the values of $a_3$ and $b_1$, but also by the values of $\al_2$ and $\be_2$.

\subsubsection{Asymptotic stability for all $k$: Theorem~\ref{all_k_local}}
\label{subsubsec:proof_all_k_local}

Assume $\det C \neq 0$ and let $J$ be the Jacobian matrix at the unique positive equilibrium $(x^*,y^*)$.
Then, by Proposition~\ref{fix_k_local}~\rm(i), we have $\tr J \leq 0$ if and only if $a_1 - K b_3\leq 0$, where $K = \frac{k_3}{k_2}\frac{x^*}{y^*}$.
We claim that $K$ can take any positive value if either
\begin{enumerate}[$\bullet$]
\item the parameters $k_1$, $k_2$, $k_3$, $k_4$ can take any positive values, or
\item the parameters $k_1$, $k_2$, $k_3$, $k_4$ can take any positive values with $k_2=nk_3$.
\end{enumerate}
To prove the claim, let $k_2$, $k_3$, $x^*$, and $y^*$ be arbitrary positive numbers (with $k_2=nk_3$ in the second case).
Then the unique positive equilibrium of the ODE~\eqref{ode_math} with $k_1=k_2 \, (x^*)^{-a_1}(y^*)^{-b_2}$ and $k_4=k_3 \, (x^*)^{-a_3}(y^*)^{-b_3}$ is indeed $(x^*,y^*)$.
This concludes the proof of the claim.

As a consequence, each of the statements \rm(i) and \rm(ii) in Theorem~\ref{all_k_local} implies $a_1 \leq 0 \leq b_3$.
By Proposition~\ref{fix_k_local}~\rm(i), asymptotic stability of the unique positive equilibrium implies $\det C < 0$.
By the discussion in Subsection \ref{subsubsec:first_integral}, the unique positive equilibrium is not asymptotically stable if $(a_1,b_3)=(0,0)$.
Thus, each of the statements \rm(i) and \rm(ii) in Theorem~\ref{all_k_local} implies \rm(iii).

On the other hand,
statement \rm(iii) in Theorem~\ref{all_k_local} implies $\tr J < 0$ and $\det J > 0$.
Therefore, both eigenvalues of $J$ have negative real part, and therefore the positive equilibrium is asymptotically stable. 
Thus, statement \rm(iii) implies both \rm(i) and \rm(ii).
This concludes the proof of Theorem~\ref{all_k_local}.

\subsection{Global asymptotic stability for all $k$: Theorem~\ref{all_k_global}}

By Theorem~\ref{all_k_local}, the unique positive equilibrium of the ODE~\eqref{ode_math} is asymptotically stable for all $k$
if and only if $\det C < 0$, $a_1 \leq 0 \leq b_3$, and $(a_1,b_3)\neq(0,0)$.
Theorem~\ref{all_k_global} states that -- apart from some boundary cases -- asymptotic stability implies global asymptotic stability.
To prove it, we need to rule out periodic solutions, unbounded solutions, and solutions approaching the boundary of the positive quadrant.
For the technical analysis, we consider the ODE~\eqref{ode_trafo}.

\subsubsection{Precluding periodic solutions}
\label{subsubsec:preclude_periodic}

To preclude periodic solutions, we use the Bendixson-Dulac test.
Let $h \colon \R^2_+ \to \R_+$ and $(f,g) \colon \R^2_+ \to \R^2$ be two continuously differentiable functions. Then
\begin{align*}
\frac{\dive(hf,hg)}{h} = \dive(f,g) + f\frac{\partial_1 h}{h} + g\frac{\partial_2 h}{h}
= \partial_1 f + \partial_2 g + f\frac{\partial_1 h}{h} + g\frac{\partial_2 h}{h}.
\end{align*}

Let $f(x,y) = k_1 x^{a_1} y^{b_1} - k_2$ and $g(x,y) = k_3 - k_4 x^{a_3} y^{b_3}$ denote the right-hand side of the ODE~\eqref{ode_trafo}
and choose $h(x,y) = x^{-p} y^{-q}$ as Dulac function.
Then $\frac{\partial_1 h}{h} = -\frac{p}{x}$, $\frac{\partial_2 h}{h} = -\frac{q}{y}$, and
\[
\frac{\dive(hf,hg)}{h} = 
k_1 (a_1 - p) x^{a_1 - 1} y^{b_1} +
k_2 p x^{-1}                      +
k_3 (-q) y^{-1}                   +
k_4 (q - b_3) x^{a_3} y^{b_3 - 1}.
\]
Thus, if $a_1 \leq 0 \leq b_3$ and $(a_1,b_3)\neq(0,0)$,
and $p$ and $q$ are chosen such that $a_1 \leq p \leq 0 \leq q \leq b_3$, then $\frac{\dive(hf,hg)}{h} < 0$ on $\R^2_+$.
Therefore, by the Bendixson-Dulac test, the ODE~\eqref{ode_trafo}
does not admit periodic solutions.

\subsubsection{Precluding unbounded solutions} \label{subsubsec:bounded_orbits}

In order to prove boundedness of the solutions in the case $a_1 < 0 < b_3$ and $\det C < 0$,
we consider all possible signs of $a_3$ and $b_1$ and the corresponding nullcline geometries.
For phase portraits in the nine cases, see Figure~\ref{fig:streamplots_very_stable_region}. 
In two cases (top left and bottom right),
solutions may spiral around the unique positive equilibrium. 
Since the divergence of the right-hand side of the ODE~\eqref{ode_trafo} is negative, 
see Subsection~\ref{subsubsec:preclude_periodic}, they can spiral inwards only (anti-clockwise and clockwise, respectively).
In the other seven cases, solutions are ultimately monotonic.

In the cases $a_1 < 0 = b_3$ and $a_1 = 0 < b_3$,
we prove boundedness of the solutions by providing a Lyapunov function.
In fact, we use a continuously differentiable function $V \colon \R^2_+ \to \R$ defined by Equations~\eqref{lyapunov}. 
Let $f(x,y) = k_1 x^{a_1} y^{b_1} - k_2$ and $g(x,y) = k_3 - k_4 x^{a_3} y^{b_3}$ denote the right-hand side of the ODE~\eqref{ode_trafo}.
The time derivative of $V$ along a solution amounts to
\begin{align*}
\dot V &= (\partial_1 V) \, \dot x + (\partial_2 V) \, \dot y \\
&= (\partial_1 V) \, f + (\partial_2 V) \, g \\
&= x^{a_3}y^{b_1} \cdot
\begin{cases}
k_1k_3 [x^{-a_3}-(x^*)^{-a_3}][x^{a_1}-(x^*)^{a_1}], & \text{if } b_3=0, \\
(-k_2k_4) [y^{-b_1}-(y^*)^{-b_1}][y^{b_3}-(y^*)^{b_3}], & \text{if } a_1 = 0,
\end{cases}
\end{align*}
where we use $a_3 b_1 > 0$ (which follows from $\det C < 0$).
%
We first consider $a_3,b_1 < 0$.
In both cases, $a_1 < 0 = b_3$ and $a_1 = 0 < b_3$,
we obtain that $\dot V \le 0$ in~$\R^2_+$.
Since the sublevel sets of $V$ are bounded subsets of $\mathbb{R}^2_+$,
the solutions stay bounded forward in time.
It remains to consider $a_3, b_1 > 0$.
Here, we obtain that $\dot V \ge 0$, and the superlevel sets of $V$ are bounded.
This concludes the proof of the boundedness of the solutions of the ODE~\eqref{ode_trafo}.

\subsubsection{Solutions approaching the boundary of the positive quadrant}
\label{subsubsec:approach_the_boundary_in_the_boundary_case}

We assume $a_1 \leq 0 \leq b_3$ and $\det C < 0$.
We prove that no solution of the ODE~\eqref{ode_trafo} approaches the boundary of the positive quadrant if and only if either
\begin{enumerate}[$\bullet$]
\item $a_1 < 0 < b_3$,
\item $a_1 < 0 = b_3$, $a_3 < 0$, and $b_1 \leq -1$, or
\item $a_1 = 0 < b_3$, $a_3 \leq -1$, and $b_1 < 0$.
\end{enumerate}

In the case $a_1 < 0 < b_3$,
this follows from the nullcline geometries shown in Figure~\ref{fig:streamplots_very_stable_region}.
See the discussion at the beginning of Subsection~\ref{subsubsec:bounded_orbits}.

In the cases $a_1 < 0 = b_3$ and $a_1 = 0 < b_3$, $\det C < 0$ implies $a_3 b_1 > 0$.
We first consider $a_3,b_1 > 0$ and show that there exist solutions approaching the boundary.

In the case $a_1 < 0 = b_3$ (and $a_3 , b_1 > 0$),
the vector field is defined even for $y=0$ and $x>0$ and points transversally out of the non-negative quadrant for $y=0$ and $x > x^*$.
For the phase portrait, see the top left panel in Figure~\ref{fig:streamplots_boundary_of_the_very_stable_region}.

In the case $a_1 = 0 < b_3$ (and $a_3 , b_1 > 0$),
the vector field is defined even for $x=0$ and $y>0$ and points transversally out of the non-negative quadrant for $x=0$ and $0 < y < y^*$.
See the top right panel in Figure~\ref{fig:streamplots_boundary_of_the_very_stable_region}.

It remains to examine the cases
\begin{enumerate}[$\bullet$]
\item $a_1 < 0 = b_3$ and $a_3, b_1 < 0$ and
\item $a_1 = 0 < b_3$ and $a_3, b_1 < 0$.
\end{enumerate}

In the case $a_1 < 0 = b_3$ (and $a_3 ,b_1 < 0$), the $y$-axis is repelling.
If additionally $b_1 \leq -1$ then the level sets of the Lyapunov function $V$ defined by Equations~\eqref{lyapunov} 
are disjoint from the $x$-axis, and therefore no solution can approach the $x$-axis either.
For the phase portrait, see the middle left panel in Figure~\ref{fig:streamplots_boundary_of_the_very_stable_region}.
If alternatively $-1 < b_1 < 0$ then we show that some of the solutions approach the $x$-axis. Consider the auxiliary ODE
\begin{align} \label{ode_aux_b3_zero}
\dot x &= k_1 x^{a_1} y^{b_1} , \\
\dot y &= k_3 - k_4 \, x^{a_3} \nonumber
\end{align}
in the positive quadrant.
Since the ODE~\eqref{ode_aux_b3_zero} is separable, one can solve it explicitly.
We are particularly interested in a solution that approaches the boundary of the positive quadrant at the point $(x^*,0)$.
The orbit of such a solution is given by the curve
\begin{align*}
y = \left[\frac{(b_1+1)}{k_1}\left(
k_3\frac{x^{1-a_1}-(x^*)^{1-a_1}}{1-a_1} - 
k_4\frac{x^{1+a_3-a_1}-(x^*)^{1+a_3-a_1}}{1+a_3-a_1}
\right)\right]^\frac{1}{b_1+1}
\end{align*}
for $0 < x < x^*$ if $1+a_3-a_1 \neq 0$ and
\begin{align*}
y = \left[\frac{(b_1+1)}{k_1}\left(
k_3\frac{x^{1-a_1}-(x^*)^{1-a_1}}{1-a_1} - 
k_4\left(\ln x - \ln x^*\right)
\right)\right]^\frac{1}{b_1+1}
\end{align*}
for $0 < x < x^*$ if $1+a_3-a_1 = 0$.
In any case,
the unique positive equilibrium $(x^*,y^*)$ of the ODE~\eqref{ode_trafo} lies to the right of this curve,
and solutions of the ODE~\eqref{ode_trafo} that start on or to the left of the curve approach the $x$-axis at a point $(\tilde x,0)$ with $0<\tilde x\le x^*$.
For the phase portrait,
see the bottom left panel in Figure~\ref{fig:streamplots_boundary_of_the_very_stable_region},
and for the construction of the curve and the resulting forward invariant set of the ODE~\eqref{ode_trafo},
see the left panel in Figure~\ref{fig:fwd_invar_sets_boundary_of_the_very_stable_region}.

In case $a_1 = 0 < b_3$ (and $a_3 ,b_1 < 0$), the $x$-axis is repelling.
If additionally $a_3 \leq -1$, 
then the level sets of the Lyapunov function $V$ are disjoint from the $y$-axis, and therefore no solution can approach the $y$-axis either.
For the phase portrait, see the middle right panel in Figure~\ref{fig:streamplots_boundary_of_the_very_stable_region}.
If alternatively $-1 < a_3 < 0$ then we show that some of the solutions approach the $y$-axis. Consider the auxiliary ODE
\begin{align} \label{ode_aux_a1_zero}
\dot x &= k_1 y^{b_1} - k_2, \\
\dot y &= - k_4 \, x^{a_3}y^{b_3} \nonumber
\end{align}
in the positive quadrant.
Since the ODE~\eqref{ode_aux_a1_zero} is separable, one can solve it explicitly.
We are particularly interested in a solution that approaches the boundary of the positive quadrant at the point $(0,y^*)$.
The orbit of such a solution is given by the curve
\begin{align*}
x = \left[-\frac{(a_3+1)}{k_4}\left(
k_1\frac{y^{1+b_1-b_3}-(y^*)^{1+b_1-b_3}}{1+b_1-b_3} - 
k_2\frac{y^{1-b_3}-(y^*)^{1-b_3}}{1-b_3}
\right)\right]^\frac{1}{a_3+1}
\end{align*}
for $y > y^*$ if $1+b_1-b_3 \neq 0$ and $b_3 \neq 1$,
\begin{align*}
x = \left[-\frac{(a_3+1)}{k_4}\left(
k_1(\ln y - \ln y^*) - 
k_2\frac{y^{1-b_3}-(y^*)^{1-b_3}}{1-b_3}
\right)\right]^\frac{1}{a_3+1}
\end{align*}
for $y > y^*$ if $1+b_1-b_3 = 0$, and
\begin{align*}
x = \left[-\frac{(a_3+1)}{k_4}\left(
k_1\frac{y^{b_1}-(y^*)^{b_1}}{b_1} - 
k_2(\ln y - \ln y^*)
\right)\right]^\frac{1}{a_3+1}
\end{align*}
for $y > y^*$ if $b_3 = 1$.
In any case,
the unique positive equilibrium $(x^*,y^*)$ of the ODE~\eqref{ode_trafo} lies below this curve,
and solutions of the ODE~\eqref{ode_trafo} that start on or above the curve approach the $y$-axis at a point $(0,\tilde y)$ with $\tilde y \ge y^*$.
For the phase portrait,
see the bottom right panel in Figure~\ref{fig:streamplots_boundary_of_the_very_stable_region},
and for the construction of the curve and the resulting forward invariant set of the ODE~\eqref{ode_trafo},
see the right panel in Figure~\ref{fig:fwd_invar_sets_boundary_of_the_very_stable_region}.

This concludes the proof of Theorem~\ref{all_k_global}.


\subsection{Preclusion of global stability: Proposition~\ref{prop:fwd_inv_sets}}

We assume $\det C < 0$ and $(a_1,b_3)\neq(0,0)$. 
The unique positive equilibrium $(x^*,y^*)$ is asymptotically stable (respectively, repelling) if $a_1 \leq 0 \leq b_3$ (respectively, if $a_1 \geq 0 \geq b_3$),
since the trace of the Jacobian matrix at $(x^*,y^*)$ is negative (respectively, positive), see Proposition~\ref{fix_k_local}.

Proposition~\ref{prop:fwd_inv_sets} deals with the cases $a_1,b_3<0$ and $a_1,b_3>0$. 
In both cases, $\det C < 0$ implies $a_3 b_1 >0$.
In each of the four subcases in Proposition~\ref{prop:fwd_inv_sets},
we construct a closed forward invariant set
that does not contain the unique positive equilibrium, see Figure~\ref{fig:fwd_invar_sets}. 

We prove the four statements in Proposition~\ref{prop:fwd_inv_sets} separately,
see Lemmas \ref{lemma:a1_neg_b1_neg_a3_neg_b3_neg}, \ref{lemma:a1_neg_b1_pos_a3_pos_b3_neg},
\ref{lemma:a1_pos_b1_pos_a3_pos_b3_pos}, and \ref{lemma:a1_pos_b1_neg_a3_neg_b3_pos} below. 
In order to ease the notation in the proofs of these lemmas, we consider the ODE~\eqref{ode_trafo} with $k_1=k_2=k_3=k_4=1$, that is,
\begin{align} \label{eq:ode_abcd_exponents_equal_rate_constants}
\dot x & = x^{a_1} y^{b_1} - 1, \\
\dot y & = 1 - x^{a_3} y^{b_3}. \nonumber
\end{align}
However, the assumption $k_1=k_2=k_3=k_4=1$ is not necessary for the validity of the lemmas.

\begin{lem} \label{lemma:a1_neg_b1_neg_a3_neg_b3_neg}
Assume $a_1<0$, $b_1<0$, $a_3<0$, $b_3<0$, and $\det C < 0$. If
\begin{align*}
1 + b_1 - b_3>0 ~ \text{ or } ~ \det C > a_3 + b_3 ,
\end{align*}
then there exist $\gamma < -\frac{a_3}{b_3}$ and $x_0 > 1$ such that the set 
\begin{align*}
\{(x,y)\in\R^2_+ ~|~ x \geq x_0 ~ \text{and} ~ 0 < y \leq x^\gamma\}
\end{align*}
is forward invariant under the ODE~\eqref{eq:ode_abcd_exponents_equal_rate_constants}. (See the top left panel in Figure~\ref{fig:fwd_invar_sets}.)
\end{lem}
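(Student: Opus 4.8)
The plan is to exhibit the region $R = \{(x,y) \in \R^2_+ : x \ge x_0,\ 0 < y \le x^\gamma\}$ and verify forward invariance by checking that the vector field points inward along the two pieces of $\partial R$: the vertical segment $\{x = x_0,\ 0 < y \le x_0^\gamma\}$ and the curve $\{y = x^\gamma,\ x \ge x_0\}$. (The $x$-axis and the line at infinity are not reached in finite time, so they need no separate treatment; and since $\gamma < -a_3/b_3 < 0$ here, $R$ is a thin cuspidal region wedged against the positive $x$-axis.) On the vertical edge we need $\dot x \ge 0$, i.e.\ $x_0^{a_1} y^{b_1} \ge 1$; since $a_1, b_1 < 0$ and on this edge $y \le x_0^\gamma$, we have $x_0^{a_1} y^{b_1} \ge x_0^{a_1} x_0^{\gamma b_1} = x_0^{a_1 + \gamma b_1}$, and because $\gamma < -a_3/b_3$ together with $\det C = a_1 b_3 - a_3 b_1 < 0$ one checks $a_1 + \gamma b_1 < 0$ (the sign works out from $b_1 < 0$), so choosing $x_0 > 1$ large enough makes this $\ge 1$. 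So the vertical edge is handled by taking $x_0$ large.

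The substantive part is the curved boundary $y = x^\gamma$. Parametrize it by $x$ and compare the slope $\frac{dy}{dx}\big|_{\text{orbit}} = \dot y / \dot x$ with the slope of the curve, $\gamma x^{\gamma - 1}$; forward invariance (with $R$ below the curve) requires that along $y = x^\gamma$ the orbit is steeper downward than the boundary, equivalently $\dot y - \gamma x^{\gamma-1}\dot x \le 0$ whenever $\dot x > 0$, with the appropriate inequality when $\dot x < 0$. Substituting $y = x^\gamma$: $\dot x = x^{a_1 + \gamma b_1} - 1$ and $\dot y = 1 - x^{a_3 + \gamma b_3}$. Note $a_3 + \gamma b_3 > 0$ precisely because $\gamma < -a_3/b_3$ and $b_3 < 0$; so for $x$ large, $\dot y < 0$ and $\dot x < 0$ (using $a_1 + \gamma b_1 < 0$ from above) — both components pull into the cusp, which is the right direction, and one only needs to control the ratio. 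The inequality to establish reduces, after clearing the positive factor $x^{\gamma - 1}$, to something of the form $x^{a_3 + \gamma b_3 - \gamma + 1} - x^{a_1 + \gamma b_1 - \gamma + 1} \ge \gamma(1 - x^{\cdots})\cdots$ — I would organize it as: show the dominant power on the left beats the others for all $x \ge x_0$. This is where the two alternative hypotheses enter: the exponent bookkeeping works out under $1 + b_1 - b_3 > 0$ in one grouping and under $\det C > a_3 + b_3$ in the other, these being exactly the conditions that guarantee the relevant exponent comparison holds.

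Concretely, I would (1) fix the admissible range for $\gamma$, namely $-a_3/b_3 > \gamma$ with $\gamma$ close enough to $-a_3/b_3$ that the auxiliary sign conditions ($a_1 + \gamma b_1 < 0$, and the exponent inequality needed in step 3) all persist by continuity — this uses $\det C < 0$; (2) dispatch the vertical edge by enlarging $x_0$; (3) on $y = x^\gamma$, write the invariance condition as a single inequality $P(x) \le 0$ where $P$ is a sum of (real) powers of $x$, factor out the largest power, and show the bracket is eventually negative — splitting into the case $1 + b_1 - b_3 > 0$ and the case $\det C > a_3 + b_3$, each forcing the needed ordering of exponents; then enlarge $x_0$ once more so the inequality holds for all $x \ge x_0$. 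The main obstacle is step (3): correctly identifying which exponent dominates and seeing that \emph{exactly} the hypothesis "$1 + b_1 - b_3 > 0$ or $\det C > a_3 + b_3$" is what makes the dominant term have the right sign — all other terms are lower order and absorbed into the choice of $x_0$. Everything else (the vertical edge, non-escape through the $x$-axis, the choice of $\gamma$) is routine once the signs of $a_1 + \gamma b_1$ and $a_3 + \gamma b_3$ are pinned down.
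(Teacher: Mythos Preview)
Your overall strategy---check invariance on the vertical segment and on the curve $y=x^\gamma$, reduce the latter to a comparison of exponents, and use the two alternative hypotheses to force the dominant exponent to have the right sign---is exactly the paper's approach. But you have a sign error that propagates through the argument and makes several steps invalid as written.

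\textbf{The sign of $a_1+\gamma b_1$.} From $\det C = a_1b_3 - a_3b_1 < 0$ with all four entries negative one gets $a_1/b_1 < a_3/b_3$, hence $-a_3/b_3 < -a_1/b_1$. Thus $\gamma < -a_3/b_3$ forces $\gamma < -a_1/b_1$, and multiplying by $b_1<0$ gives $a_1 + \gamma b_1 > 0$, not $<0$. Consequently $\dot x = x^{a_1+\gamma b_1}-1 > 0$ for every $x>1$ along $y=x^\gamma$, the opposite of what you claim. Your treatment of the vertical edge is then internally inconsistent: with a negative exponent, $x_0^{a_1+\gamma b_1}$ would \emph{decrease} to $0$ as $x_0\to\infty$, so ``choosing $x_0>1$ large enough makes this $\ge 1$'' would fail. (With the correct positive exponent, \emph{any} $x_0>1$ already gives $\dot x>0$ on that edge, so the check is in fact trivial; the paper silently omits it.) On the curved boundary the correct picture is $\dot x>0$, $\dot y<0$; division by $\dot x$ preserves inequalities, and one is led to show
\[
\gamma^{-1}x^{1-\gamma}\,\frac{1-x^{a_3+\gamma b_3}}{x^{a_1+\gamma b_1}-1}\;>\;1\quad\text{for }x\ge x_0,
\]
whose left side behaves like $-\gamma^{-1}x^{(1-a_1+a_3)-\gamma(1+b_1-b_3)}$ as $x\to\infty$.

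\textbf{The choice of $\gamma$.} Your step (1) fixes $\gamma$ ``close enough to $-a_3/b_3$'' by continuity, but that only works under the second hypothesis $\det C > a_3+b_3$ (and $1+b_1-b_3\le 0$). Under the first hypothesis $1+b_1-b_3>0$, the exponent $(1-a_1+a_3)-\gamma(1+b_1-b_3)$ is made positive by taking $\gamma$ very negative, i.e.\ \emph{far} from $-a_3/b_3$. The two hypotheses thus lead to two genuinely different choices of $\gamma$, as the paper's case split makes explicit.
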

\begin{proof}
We want to find $\gamma < -\frac{a_3}{b_3}$ and $x_0 > 1$ such that, along the curve $y=x^\gamma$,
\begin{align*}
\dd{y}{x} = \frac{\dot y}{\dot x} =
\frac{1 - x^{a_3}(x^\gamma)^{b_3}}{{x^{a_1}(x^\gamma)^{b_1} - 1}} < \gamma x^{\gamma-1} =
\dd{x^\gamma}{x} \quad \text{for all } x \geq x_0
\end{align*}
or, equivalently,
\begin{align*}
\gamma^{-1}x^{1-\gamma}\frac{1 - x^{a_3+\gamma b_3}}{{x^{a_1+\gamma b_1} - 1}} > 1  \quad \text{for all } x \geq x_0.
\end{align*}
The assumptions imply $-\frac{a_3}{b_3} < -\frac{a_1}{b_1} < 0$,
and $\gamma < -\frac{a_3}{b_3}$ further implies $a_3+\gamma b_3>0$ and $a_1+\gamma b_1>0$.
Hence,
\begin{align*}
\lim_{x\to\infty}\gamma^{-1}x^{1-\gamma}\frac{1 - x^{a_3+\gamma b_3}}{{x^{a_1+\gamma b_1} - 1}} = 
\lim_{x\to\infty}-\gamma^{-1}x^{(1-a_1+a_3)-\gamma(1+b_1-b_3)} = \infty,
\end{align*}
if additionally 
\begin{align*}
(1-a_1+a_3)-\gamma(1+b_1-b_3) > 0 .
\end{align*}
If $1+b_1-b_3 > 0$, we just choose $-\gamma$ large enough.
If $\det C > a_3 + b_3$, a short calculation shows $(1-a_1+a_3)+\frac{a_3}{b_3}(1+b_1-b_3)>0$.
Hence, if $1+b_1-b_3 = 0$, the above inequality holds (for every $\gamma$),
and if $1+b_1-b_3 < 0$, we obtain
\[
\frac{1-a_1+a_3}{1+b_1-b_3} < -\frac{a_3}{b_3}
\]
and choose $\gamma$ between the left- and right-hand side of the last inequality. \qed
\end{proof}

\begin{lem}\label{lemma:a1_neg_b1_pos_a3_pos_b3_neg}
Assume $a_1<0$, $b_1>0$, $a_3>0$, $b_3<0$, and $\det C < 0$. If
\begin{align*}
\det C > a_3+b_3 ,
\end{align*}
then there exist $\gamma > -\frac{a_3}{b_3}$ and $x_0 > 1$ such that the set 
\begin{align*}
\{(x,y)\in\R^2_+ ~|~ x \geq x_0 ~ \text{and} ~ x^\gamma \leq y\}
\end{align*}
is forward invariant under the ODE~\eqref{eq:ode_abcd_exponents_equal_rate_constants}. (See the top right panel in Figure~\ref{fig:fwd_invar_sets}.)
\end{lem}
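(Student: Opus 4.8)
The plan is to follow the same strategy as in the proof of Lemma~\ref{lemma:a1_neg_b1_neg_a3_neg_b3_neg}, adapted to the fact that here the curve $y=x^\gamma$ is the \emph{lower} boundary of the candidate set $S=\{(x,y)\in\R^2_+ \mid x\ge x_0,\ y\ge x^\gamma\}$ rather than the upper one. The boundary of $S$ inside the open quadrant consists of the vertical segment $\{x=x_0,\ y\ge x_0^\gamma\}$ and the arc $\{y=x^\gamma,\ x\ge x_0\}$, and I would verify the inflow (Nagumo) condition on each: on the vertical segment it is enough that $\dot x\ge 0$, and on the arc, with $\phi(x,y)=y-x^\gamma$, it is enough that $\dot\phi=\dot y-\gamma x^{\gamma-1}\dot x\ge 0$.

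The first step is the sign bookkeeping. From the sign pattern $a_1<0<b_1$, $a_3>0>b_3$ together with $\det C<0$, one reads off $0<-\frac{a_1}{b_1}<-\frac{a_3}{b_3}$. Consequently any $\gamma>-\frac{a_3}{b_3}$ (as required by the statement) automatically gives $a_3+\gamma b_3<0$ and $a_1+\gamma b_1>0$. The latter yields $\dot x=x^{a_1+\gamma b_1}-1>0$ for $x>1$ on the arc, and, since $b_1>0$, also $\dot x\ge x_0^{a_1+\gamma b_1}-1>0$ on the whole vertical segment $x=x_0>1$. This settles the vertical boundary, and since $\dot x>0$ there, the arc condition $\dot\phi\ge0$ becomes equivalent to $\frac{\dot y}{\dot x}\ge\gamma x^{\gamma-1}$, i.e.\ to the very same inequality that appears in Lemma~\ref{lemma:a1_neg_b1_neg_a3_neg_b3_neg}:
\[
\gamma^{-1}x^{1-\gamma}\,\frac{1-x^{a_3+\gamma b_3}}{x^{a_1+\gamma b_1}-1}>1\qquad\text{for all }x\ge x_0 .
\]

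For the last step, since $a_3+\gamma b_3<0$ and $a_1+\gamma b_1>0$, the left-hand side is asymptotic as $x\to\infty$ to $\gamma^{-1}x^{(1-a_1)-\gamma(1+b_1)}$, which tends to $\infty$ exactly when $\gamma<\frac{1-a_1}{1+b_1}$ (note $1+b_1>0$). The place where the hypothesis enters is the ordering of the two candidate endpoints for $\gamma$: a short cross-multiplication, using $b_3<0$ and $1+b_1>0$, shows $-\frac{a_3}{b_3}<\frac{1-a_1}{1+b_1}$ if and only if $\det C>a_3+b_3$. Hence, under the assumption, the interval $\bigl(-\frac{a_3}{b_3},\,\frac{1-a_1}{1+b_1}\bigr)$ is nonempty; I would pick $\gamma$ in it and then take $x_0>1$ large enough that the displayed inequality holds for all $x\ge x_0$, which makes $S$ forward invariant.

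I expect the only genuine difficulty to be the sign-chasing in the rational inequalities — in particular checking that the two candidate endpoints for $\gamma$ are ordered precisely by the sign of $\det C-(a_3+b_3)$ — since the rest is a direct transcription of Lemma~\ref{lemma:a1_neg_b1_neg_a3_neg_b3_neg}. A minor point worth noting is that at the corner $(x_0,x_0^\gamma)$ both $\dot x>0$ and $\dot\phi>0$ hold strictly, so no separate argument is needed there.
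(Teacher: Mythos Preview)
Your proposal is correct and follows essentially the same approach as the paper: the same sign bookkeeping ($0<-\tfrac{a_1}{b_1}<-\tfrac{a_3}{b_3}$, hence $a_3+\gamma b_3<0$ and $a_1+\gamma b_1>0$), the same reduction to $\gamma^{-1}x^{1-\gamma}\frac{1-x^{a_3+\gamma b_3}}{x^{a_1+\gamma b_1}-1}>1$ on the arc, the same asymptotic $\gamma^{-1}x^{(1-a_1)-\gamma(1+b_1)}\to\infty$, and the same use of $\det C>a_3+b_3$ to order the endpoints and choose $\gamma\in\bigl(-\tfrac{a_3}{b_3},\tfrac{1-a_1}{1+b_1}\bigr)$. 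Your explicit verification that $\dot x>0$ on the vertical segment $x=x_0$ (using $b_1>0$) is a detail the paper leaves implicit.
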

\begin{proof}
We want to find $\gamma > -\frac{a_3}{b_3}$ and $x_0 > 1$ such that, along the curve $y=x^\gamma$,
\begin{align*}
\dd{y}{x} = \frac{\dot y}{\dot x} =
\frac{1 - x^{a_3}(x^\gamma)^{b_3}}{{x^{a_1}(x^\gamma)^{b_1} - 1}} > \gamma x^{\gamma-1} =
\dd{x^\gamma}{x} \quad \text{for all } x \geq x_0
\end{align*}
or, equivalently,
\begin{align*}
\gamma^{-1}x^{1-\gamma}\frac{1 - x^{a_3+\gamma b_3}}{{x^{a_1+\gamma b_1} - 1}} > 1 \quad \text{for all } x \geq x_0.
\end{align*}
The assumptions imply $0 < -\frac{a_1}{b_1} < -\frac{a_3}{b_3}$,
and $\gamma > -\frac{a_3}{b_3}$ further implies $a_3+\gamma b_3<0$ and $a_1+\gamma b_1>0$.
Hence,
\begin{align*}
\lim_{x\to\infty}\gamma^{-1}x^{1-\gamma}\frac{1 - x^{a_3+\gamma b_3}}{{x^{a_1+\gamma b_1} - 1}} = 
\lim_{x\to\infty}\gamma^{-1}x^{(1-a_1)-\gamma(1+b_1)} = \infty,
\end{align*}
if additionally 
\begin{align*}
(1-a_1)-\gamma(1+b_1) > 0.
\end{align*}
If $\det C > a_3 + b_3$, then $(1-a_1)+\frac{a_3}{b_3}(1+b_1)>0$.
Hence, we obtain
\[
-\frac{a_3}{b_3} < \frac{1-a_1}{1+b_1}
\]
and choose $\gamma$ between the left- and right-hand side of the last inequality. \qed
\end{proof}

\begin{lem}\label{lemma:a1_pos_b1_pos_a3_pos_b3_pos}
Assume $a_1>0$, $b_1>0$, $a_3>0$, $b_3>0$, and $\det C < 0$. Then for all $-\frac{a_1}{b_1} < \gamma < 0$ there exists $0 < x_0 < 1$ such that the set 
\begin{align*}
\{(x,y)\in\R^2_+ ~|~ 0 < x \leq x_0 ~ \text{and} ~ x_0^\gamma \leq y \leq x^\gamma\}
\end{align*}
is forward invariant under the ODE~\eqref{eq:ode_abcd_exponents_equal_rate_constants}. (See the bottom left panel in Figure~\ref{fig:fwd_invar_sets}.)
\end{lem}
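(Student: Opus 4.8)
The set $S=\{(x,y)\in\R^2_+ : 0<x\le x_0,\ x_0^\gamma\le y\le x^\gamma\}$ is a curvilinear ``wedge'' hugging the $y$-axis: since $\gamma<0$, for $0<x\le x_0<1$ we have $x^\gamma\ge x_0^\gamma>1$, so $S$ is a nonempty, relatively closed subset of $\R^2_+$, bounded below by the horizontal segment $\{y=x_0^\gamma\}$ and above by the decreasing arc $\{y=x^\gamma\}$, the two meeting at the corner $(x_0,x_0^\gamma)$. The plan is to show, exactly as in the proofs of Lemmas~\ref{lemma:a1_neg_b1_neg_a3_neg_b3_neg} and~\ref{lemma:a1_neg_b1_pos_a3_pos_b3_neg}, that for $x_0$ small enough the vector field of~\eqref{eq:ode_abcd_exponents_equal_rate_constants} points into $S$ along its whole relative boundary in $\R^2_+$ (these two pieces), which gives forward invariance. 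First I would record the sign facts that drive everything: since $a_1,b_1,a_3,b_3>0$ and $\det C=a_1b_3-a_3b_1<0$, we have $\frac{a_1}{b_1}<\frac{a_3}{b_3}$, hence $-\frac{a_3}{b_3}<-\frac{a_1}{b_1}<\gamma<0$, and therefore $a_1+\gamma b_1>0$ and $a_3+\gamma b_3>0$.

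The lower edge $\{y=x_0^\gamma,\ 0<x\le x_0\}$ is the easy part and costs nothing: there $\dot y=1-x^{a_3}x_0^{\gamma b_3}\ge 1-x_0^{a_3+\gamma b_3}>0$, using $a_3>0$, $0<x\le x_0<1$, and $a_3+\gamma b_3>0$, so the field points strictly upward into $S$ for every $x_0\in(0,1)$.

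All the work is on the upper arc $\{y=x^\gamma,\ 0<x\le x_0\}$. Along it, $\dot x=x^{a_1+\gamma b_1}-1<0$ (because $x<1$ and $a_1+\gamma b_1>0$) and $\dot y=1-x^{a_3+\gamma b_3}>0$, so the field points into $\{y\le x^\gamma\}$ exactly when $\frac{d}{dt}(x^\gamma-y)=\gamma x^{\gamma-1}\dot x-\dot y>0$ on the arc, equivalently $\gamma^{-1}x^{1-\gamma}\,\frac{1-x^{a_3+\gamma b_3}}{x^{a_1+\gamma b_1}-1}<1$ for all $0<x\le x_0$ (in the division one must watch that here $\dot x<0$, which reverses the orientation of the inequality relative to Lemma~\ref{lemma:a1_neg_b1_neg_a3_neg_b3_neg}). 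As $x\to 0^+$ the fraction tends to $-1$ while $\gamma^{-1}x^{1-\gamma}\to 0$ (because $1-\gamma>0$), so the left-hand side tends to $0<1$; hence the inequality holds on some interval $(0,x_0]$, and I fix such an $x_0\in(0,1)$. At the corner $(x_0,x_0^\gamma)$ the same computation handles the constraint $y\le x^\gamma$, while $\dot x<0$ keeps $x\le x_0$ and $\dot y>0$ keeps $y\ge x_0^\gamma$, so the field enters $S$ there too; this establishes forward invariance of $S$. The only real pitfall I anticipate is sign bookkeeping — because $x<1$ makes $\dot x$ negative here, the comparison with $\gamma x^{\gamma-1}$ is oriented opposite to the earlier lemmas — but, unlike Lemmas~\ref{lemma:a1_neg_b1_neg_a3_neg_b3_neg} and~\ref{lemma:a1_neg_b1_pos_a3_pos_b3_neg}, no hypothesis beyond $\det C<0$ and $\gamma\in(-\frac{a_1}{b_1},0)$ is required: the limit as $x\to 0^+$ already does the job, and the smallness of $x_0$ closes the argument.
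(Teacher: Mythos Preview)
Your proof is correct and follows essentially the same approach as the paper: the key step is the same equivalent inequality $\gamma^{-1}x^{1-\gamma}\frac{1-x^{a_3+\gamma b_3}}{x^{a_1+\gamma b_1}-1}<1$ on the upper arc, settled by the limit $x\to 0^+$ (using $a_1+\gamma b_1>0$, $a_3+\gamma b_3>0$, and $1-\gamma>0$). You are in fact a bit more thorough than the paper, explicitly verifying the lower edge $y=x_0^\gamma$ and the corner, which the paper leaves implicit.
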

\begin{proof}
Fix $-\frac{a_1}{b_1} < \gamma < 0$.
We claim that there exists $0 < x_0 < 1$ such that, along the curve $y=x^\gamma$,
\begin{align*}
\dd{y}{x} = \frac{\dot y}{\dot x} =
\frac{1 - x^{a_3}(x^\gamma)^{b_3}}{{x^{a_1}(x^\gamma)^{b_1} - 1}} > \gamma x^{\gamma-1} =
\dd{x^\gamma}{x} \quad \text{for all } x \geq x_0
\end{align*}
or, equivalently,
\begin{align*}
\gamma^{-1}x^{1-\gamma}\frac{1 - x^{a_3+\gamma b_3}}{{x^{a_1+\gamma b_1} - 1}} < 1 \quad \text{for all}~ 0 < x \leq x_0.
\end{align*}
The assumptions imply $-\frac{a_3}{b_3} < -\frac{a_1}{b_1} < 0$,
and $-\frac{a_1}{b_1} < \gamma < 0$ further implies $a_3+\gamma b_3>0$ and $a_1+\gamma b_1>0$.
Hence,
\begin{align*}
\lim_{x \to 0}\gamma^{-1}x^{1-\gamma}\frac{1 - x^{a_3+\gamma b_3}}{{x^{a_1+\gamma b_1} - 1}} = 
\lim_{x\to 0}-\gamma^{-1}x^{1-\gamma} = 0 .
\end{align*}
\qed
\end{proof}

\begin{lem}\label{lemma:a1_pos_b1_neg_a3_neg_b3_pos}
Assume $a_1>0$, $b_1<0$, $a_3<0$, $b_3>0$, and $\det C < 0$. If
\begin{align*}
1-a_1 < 0 ~ \text{ or } ~ a_1 + b_1 > 0 ,
\end{align*}
then there exist $0 < \gamma < -\frac{a_1}{b_1}$ and $x_0 > 1$ such that the set 
\begin{align*}
\{(x,y)\in\R^2_+ ~|~ x \geq x_0 ~ \text{and} ~ x_0^\gamma \leq y \leq x^\gamma\}
\end{align*}
is forward invariant under the ODE~\eqref{eq:ode_abcd_exponents_equal_rate_constants}. (See the bottom right panel in Figure~\ref{fig:fwd_invar_sets}.)
\end{lem}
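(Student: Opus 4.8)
The plan is to follow the template of Lemmas~\ref{lemma:a1_neg_b1_neg_a3_neg_b3_neg}--\ref{lemma:a1_pos_b1_pos_a3_pos_b3_pos}. The candidate set is closed and bounded away from the axes; it is unbounded to the right, and its boundary consists of the horizontal segment $y=x_0^\gamma$ ($x\ge x_0$), the curve $y=x^\gamma$ ($x\ge x_0$), and the corner $(x_0,x_0^\gamma)$ where these meet. First I would record that the sign pattern together with $\det C<0$ forces $0<-\frac{a_1}{b_1}<-\frac{a_3}{b_3}$, so that any $\gamma$ with $0<\gamma<-\frac{a_1}{b_1}$ automatically satisfies $a_1+\gamma b_1>0$ and $a_3+\gamma b_3<0$. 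Then, for $x\ge x_0>1$, along the lower segment $y=x_0^\gamma$ one has
\[
\dot y = 1-x^{a_3}x_0^{\gamma b_3} \ge 1-x_0^{a_3+\gamma b_3} > 0
\]
(using $a_3<0$ and $a_3+\gamma b_3<0$), so the flow enters through the lower boundary for every admissible $\gamma$ and every $x_0>1$.

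The substance is the curve $y=x^\gamma$. Since $\dot x=x^{a_1+\gamma b_1}-1>0$ for $x>1$, the flow does not cross this curve outward once $\frac{\dot y}{\dot x}<\gamma x^{\gamma-1}$ holds along it, i.e.\ once
\[
\gamma^{-1}x^{1-\gamma}\,\frac{1-x^{a_3+\gamma b_3}}{x^{a_1+\gamma b_1}-1} < 1 \qquad\text{for all } x\ge x_0 .
\]
The left-hand side is positive on $(1,\infty)$ and, as $x\to\infty$, is asymptotic to $\gamma^{-1}x^{(1-a_1)-\gamma(1+b_1)}$; hence it tends to $0$ as soon as $(1-a_1)-\gamma(1+b_1)<0$, and then some $x_0>1$ makes the displayed inequality hold on $[x_0,\infty)$. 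So everything reduces to choosing $\gamma$ with $0<\gamma<-\frac{a_1}{b_1}$ and $(1-a_1)-\gamma(1+b_1)<0$ simultaneously, and this is where the hypothesis enters: a short case distinction on $\sign(1+b_1)$ shows that such a $\gamma$ exists precisely when $1-a_1<0$ or $a_1+b_1>0$. For instance, if $b_1>-1$ the second inequality reads $\gamma>\frac{1-a_1}{1+b_1}$, and $\frac{1-a_1}{1+b_1}<-\frac{a_1}{b_1}$ is equivalent to $a_1+b_1>0$ (while if $a_1\ge 1$ the threshold is $\le 0$ and any small $\gamma>0$ already works); the cases $b_1=-1$ and $b_1<-1$ are handled the same way.

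It then remains to verify that the vector field points into the set at the corner $(x_0,x_0^\gamma)$ as well: the tangent cone there is $\{(u,v) : u\ge 0,\; 0\le v\le\gamma x_0^{\gamma-1}u\}$, and the two inequalities above, evaluated at $x=x_0$, say exactly that $(\dot x,\dot y)$ lies in it. Forward invariance of the closed set then follows from the standard subtangentiality criterion. I expect the only genuinely finicky step to be the bookkeeping in the $\gamma$-selection: checking in each subcase of $\sign(1+b_1)$ that the interval $(0,-\frac{a_1}{b_1})$ really meets $\{\gamma : (1-a_1)-\gamma(1+b_1)<0\}$, and that the exact solvability condition is $1-a_1<0$ or $a_1+b_1>0$; everything else is a direct sign check or a routine limit.
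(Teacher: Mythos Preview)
Your proposal is correct and follows essentially the same route as the paper: reduce forward invariance to the inequality $\gamma^{-1}x^{1-\gamma}\frac{1-x^{a_3+\gamma b_3}}{x^{a_1+\gamma b_1}-1}<1$ along $y=x^\gamma$, observe its asymptotic $\gamma^{-1}x^{(1-a_1)-\gamma(1+b_1)}$, and select $\gamma\in(0,-a_1/b_1)$ making the exponent negative via a case split (the paper splits first on $1-a_1<0$ versus $a_1+b_1>0$, you split on $\sign(1+b_1)$, but the two are equivalent). You are somewhat more careful than the paper in that you also verify the horizontal segment $y=x_0^\gamma$ and the corner, which the paper leaves implicit.
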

\begin{proof}
We want to find $0 < \gamma < -\frac{a_1}{b_1}$ and $x_0 > 1$ such that, along the curve $y=x^\gamma$,
\begin{align*}
\dd{y}{x} = \frac{\dot y}{\dot x} =
\frac{1 - x^{a_3}(x^\gamma)^{b_3}}{{x^{a_1}(x^\gamma)^{b_1} - 1}} < \gamma x^{\gamma-1}
\dd{x^\gamma}{x} \quad \text{for all } x \geq x_0
\end{align*}
or, equivalently,
\begin{align*}
\gamma^{-1}x^{1-\gamma}\frac{1 - x^{a_3+\gamma b_3}}{{x^{a_1+\gamma b_1} - 1}} < 1 \quad \text{for all}~ x \geq x_0.
\end{align*}
The assumptions imply $0 < -\frac{a_1}{b_1} < -\frac{a_3}{b_3}$,
and $0 < \gamma < -\frac{a_1}{b_1}$ further implies $a_3+\gamma b_3<0$ and $a_1+\gamma b_1>0$.
Hence,
\begin{align*}
\lim_{x\to\infty}\gamma^{-1}x^{1-\gamma}\frac{1 - x^{a_3+\gamma b_3}}{{x^{a_1+\gamma b_1} - 1}} = 
\lim_{x\to\infty}\gamma^{-1}x^{(1-a_1)-\gamma(1+b_1)} = 0,
\end{align*}
if additionally
\begin{align*}
(1-a_1)-\gamma(1+b_1) < 0. 
\end{align*}
If $1-a_1 < 0$, we just choose $\gamma$ small enough.
If $a_1 + b_1 > 0$, a short calculation shows $(1-a_1)+\frac{a_1}{b_1}(1+b_1) < 0$.
Hence, if $1+b_1 = 0$, the above inequality holds (for every $\gamma$),
and if $1+b_1 > 0$, we obtain
\[
\frac{1-a_1}{1+b_1} < -\frac{a_1}{b_1}
\]
and choose $\gamma$ between the left- and right-hand side of the last inequality. \qed
\end{proof}


\subsection{Global asymptotic stability for a particular $k$: Theorem~\ref{thm:alpha_beta_system_stability}}

We consider the ODE
\begin{align} \label{eq:ode_two_exponents}
\dot x &= x^\al - x y^\be , \\
\dot y &= x y^\be - y \nonumber
\end{align}
and assume $\al\be - \al + 1 \neq 0$.

\subsubsection{Local behavior: Theorem~\ref{thm:alpha_beta_system_stability}~\rm(i), \rm(ii), \rm(iii), and \rm(iv)}

Most of the statements in Theorem~\ref{thm:alpha_beta_system_stability} are direct consequences of the results for the ODE~\eqref{ode_math}
with $\al_1 = \al$, $\be_1 = 0$, $\al_2 = 1$, $\be_2 = \be$, $\al_3 = 0$, $\be_3 = 1$,
and hence
\begin{align*}
C =
\begin{pmatrix}
a_1 & b_1 \\
a_3 & b_3
\end{pmatrix}
=
\begin{pmatrix}
\al - 1 &    - \be \\
     - 1 & 1 - \be
\end{pmatrix} ,
\end{align*}
and $k_1=k_2=k_3=k_4=1$.

Theorem~\ref{thm:alpha_beta_system_stability} \rm{(i)} follows from Proposition~\ref{equilibria} \rm{(i)} and Proposition~\ref{fix_k_local}~\rm(i).
To prove Theorem~\ref{thm:alpha_beta_system_stability} \rm{(iii)}, we apply Proposition~\ref{hopf}.
The determinant of the Jacobian matrix at $(1,1)$ is given by $\al\be - \al + 1$, and its trace amounts to $\al+\be-2$.
For vanishing trace, we obtain
\begin{align*}
d_1 = - [(1-\al)(-\be)(1-\be)-(\al-1)(-1) \,0] = (\al-1)^2(\al-2) .
\end{align*}
Since pairs $(\al,\be)$ with $\al+\be-2=0$ and $\al\be - \al + 1>0$
lie on the line segment between $\left(\frac{1-\sqrt{5}}{2},\frac{3+\sqrt{5}}{2}\right)$ and $\left(\frac{1+\sqrt{5}}{2},\frac{3-\sqrt{5}}{2}\right)$,
we find $\alpha<2$.
Hence, if $(\al,\be)\neq(1,1)$, then $d_1<0$, the equilibrium $(1,1)$ is asymptotically stable,
and the corresponding Andronov-Hopf bifurcation is supercritical.
Now, Theorem~\ref{thm:alpha_beta_system_stability}~\rm{(ii)} follows from Proposition~\ref{fix_k_local} \rm{(ii)} and Theorem~\ref{thm:alpha_beta_system_stability} \rm{(iii)}.
Finally, we consider $(\al,\be)=(1,1)$, and hence $a_1=b_3=0$ and $a_3=b_1=-1$.
Theorem~\ref{thm:alpha_beta_system_stability}~\rm{(iv)} is a direct consequence of the discussion in Subsection \ref{subsubsec:first_integral}.

\subsubsection{Global behavior: Theorem~\ref{thm:alpha_beta_system_stability} \rm(v)}
\label{subsubsec:alpha_beta_system_glob_stab}

It remains to characterize the global asymptotic stability of the unique positive equilibrium $(1,1)$.
We assume $\al\be-\al+1>0$, a necessary condition for local asymptotic stability.

If $\al \leq 1$, $\be \leq 1$, and $(\al,\be)\neq (1,1)$, then Theorem~\ref{all_k_global} implies global asymptotic stability.
Indeed, for $\al,\be < 1$ we have $a_1 < 0 < b_3$,
for $\al<1=\be$ we have $a_1 < 0 = b_3$, $a_3<0$, $b_1 \leq -1$,
and for $\be < 1=\al$ we have $a_1 = 0 < b_3$, $a_3\leq -1$, $b_1<0$. (In the latter case, $\be>0$ since $\al\be-\al+1>0$ and $\al=1$.)

If $\al \geq 1$ and $\be \geq 1$, then we do not even have local asymptotic stability, see Theorem~\ref{thm:alpha_beta_system_stability}~\rm(ii).


If $\al < 1$ and $\be > 1$ then Lemma~\ref{lemma:a1_neg_b1_neg_a3_neg_b3_neg} precludes global asymptotic stability.
Indeed, we have $a_1=\al-1<0$, $b_1=-\beta<0$, $a_3=-1<0$, $b_3=1-\be<0$, and
\begin{align*}
\det C - (a_3+b_3) = -\al\be + \al - 1 + \be = (1-\al)(\be-1) > 0.
\end{align*}

If $\al>1$, $\be<1$, and $\be < \al-1$ then Lemma~\ref{lemma:a1_pos_b1_neg_a3_neg_b3_pos} precludes global asymptotic stability.
Indeed, we have $a_1=\al-1>0$, $b_1=-\beta<0$ (since $\al\be-\al+1>0$ and $\al>1$), $a_3=-1<0$, $b_3=1-\be>0$, and $a_1+b_1 = \al - 1 - \be > 0$.

It remains to consider the case $\al > 1$ and $\al-1 \leq \be < 1$.
Now, if $\be > 2-\al$, then the trace of the Jacobian matrix at $(1,1)$ is positive, and we do not even have local asymptotic stability.
Therefore, for the rest of this section, we assume
\begin{align} \label{eq:triangle}
1 < \al \leq \frac{3}{2} \text{ and } \al-1 \leq \be \leq 2 - \al
\end{align}
and show that the equilibrium $(1,1)$ is globally asymptotically stable in this region of the $(\al,\be)$-plane,
see the light green triangle in Figure~\ref{fig:stability_diagram_fixk}.

For the exponents in the $x$-nullcline $y=x^{\frac{\al-1}{\be}}$ and the $y$-nullcline $y=x^{\frac{1}{1-\be}}$,
the assumptions~\eqref{eq:triangle} imply $0 < \frac{\al-1}{\be} \leq 1 < \frac{1}{1-\be}$.
For a typical phase portrait, see Figure~\ref{fig:streamplot_triangle}.

Note that a solution can approach the boundary only at the origin.
Indeed, the $x$-axis for $x>0$ is repelling, 
while the $y$-axis for $y>0$ is an orbit (if we extend the state space of the ODE from the positive quadrant to the non-negative quadrant),
and solutions starting at $(0,y_0)$ with $y_0>0$ are unique backward in time.


To prove global asymptotic stability, we have to show that no solution starting in the positive quadrant
\begin{enumerate}[$\bullet$]
\item approaches the origin,
\item is unbounded or
\item periodic.
\end{enumerate}
This is guaranteed by the following three results.
In Lemma~\ref{lemma:triangle_dulac}, we construct a Dulac function
which allows to rule out periodic solutions by the Bendixson-Dulac test.
Lemma~\ref{lemma:triangle_no_escape} ensures that all orbits are bounded,
and Lemma~\ref{lemma:triangle_no_approach_origin} precludes convergence to the origin.

\begin{lem} \label{lemma:triangle_dulac}
Let $f(x,y) = x^\al - xy^\be$ and $g(x,y) = xy^\be -y$ denote the right-hand side of the ODE~\eqref{eq:ode_two_exponents},
and let $J$ be the Jacobian matrix at $(1,1)$.
Assume \eqref{eq:triangle} and let 
\begin{align*}
h(x,y) = x^{-p}y^{-q} \text{ with } p = \al - \frac{(\al-1)\be}{\det J} , \, q = \be + \frac{(\al-1)^2\be}{\det J}
\end{align*}
and $v = \frac{\dive(hf,hg)}{h}$.
Then, for $(x,y)\in\R^2_+$,
\begin{alignat*}{3}
& v(x,y) < v(1,1) < 0 \text{ for } (x,y)\neq (1,1), \quad && \text{if } 1<\al<\frac{3}{2} \text{ and } \al-1 < \be < 2-\al, \\
& v(x,y) < v(1,1) = 0 \text{ for } (x,y)\neq (1,1), && \text{if } 1<\al<\frac{3}{2} \text{ and } \be = 2-\al, \\
& v(x,y) < v(1,1) < 0 \text{ for } x \neq y, && \text{if } 1<\al<\frac{3}{2} \text{ and } \al-1 = \be, \\
& v(x,y) < v(1,1) = 0 \text{ for } x \neq y, && \text{if } \al=\frac{3}{2} \text{ and } \be = \frac{1}{2}.
\end{alignat*}
\end{lem}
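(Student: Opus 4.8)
The plan is to compute $v(x,y) = \frac{\dive(hf,hg)}{h}$ explicitly using the formula from Subsection~\ref{subsubsec:preclude_periodic} and then show it attains its maximum over $\R^2_+$ at $(1,1)$, with the value $v(1,1)$ having the sign claimed in each regime. Substituting $f(x,y) = x^\al - xy^\be$, $g(x,y) = xy^\be - y$, and $h(x,y) = x^{-p}y^{-q}$, the identity gives
\begin{align*}
v = \partial_1 f + \partial_2 g - \frac{p}{x} f - \frac{q}{y} g
= (\al - p)x^{\al-1} + (p+q-1)y^\be - (\be+1-q)xy^{\be-1} + (p-1) + q.
\end{align*}
The choice of $p$ and $q$ is designed so that the coefficients of the monomials line up favorably: one checks that $\al - p = \frac{(\al-1)\be}{\det J}$ and $\be + 1 - q = 1 - \frac{(\al-1)^2\be}{\det J} = \frac{\det J - (\al-1)^2\be}{\det J}$, where $\det J = \al\be - \al + 1 > 0$. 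Since $\det J - (\al-1)^2\be = \be - \al + 1 - (\al-1)^2\be + (\al-1)$ — I would simplify this carefully — it should turn out nonnegative exactly under \eqref{eq:triangle}, and similarly $p + q - 1$ and $\al - p$ should have signs making $v$ a sum of terms each bounded above by its value at $(x,y) = (1,1)$.

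The key step is then an application of the weighted AM–GM inequality (or equivalently convexity of the exponential after the substitution $x = e^u$, $y = e^w$). Writing $v(x,y) - v(1,1)$ as a combination of terms of the form $c_i(x^{r_i}y^{s_i} - 1)$ with $\sum$ of the exponent vectors weighted by the $c_i$ equal to zero (this balance is forced by the fact that $(1,1)$ is an equilibrium, so $f(1,1) = g(1,1) = 0$ and the monomials in $f$, $g$ individually balance), one gets that $v(x,y) \le v(1,1)$ with equality iff all the active monomials equal $1$. The number of genuinely distinct monomials determines whether equality forces $(x,y) = (1,1)$ or only $x = y$: in the two boundary subcases $\be = 2-\al$ or $\al - 1 = \be$ one of the coefficients $c_i$ vanishes, collapsing the monomial set and allowing the full line $x = y$ (resp. forcing $v(1,1) = 0$ when $\al = 3/2$, $\be = 1/2$, where both degeneracies happen at once). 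I would organize the proof by first establishing the signs of the three nontrivial coefficients as functions of $(\al,\be)$ in the triangle \eqref{eq:triangle}, identifying precisely which coefficients vanish on which edges, and then invoking strict convexity to get the strict inequality off the appropriate diagonal or point.

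The computation of $v(1,1)$ itself is straightforward: $v(1,1) = (\al - p) + (p+q-1) - (\be+1-q) + (p-1) + q$, which simplifies to $\al + \be - 2 + (\text{something})$; I expect it to equal a positive multiple of $\al + \be - 2$, so that $v(1,1) < 0$ when $\al + \be < 2$ and $v(1,1) = 0$ on the edge $\al + \be = 2$ (i.e. $\be = 2 - \al$), consistent with the four cases listed. The main obstacle will be the bookkeeping: verifying that $p$ and $q$ as defined make $\al - p$, $p + q - 1$, and $\be + 1 - q$ all have the right signs throughout the closed triangle, and correctly tracking the degeneracies on the boundary so that the strict-versus-nonstrict and the $(x,y)\neq(1,1)$-versus-$x\neq y$ distinctions come out exactly as stated. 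Everything else is a clean convexity argument once the coefficients are under control. A secondary subtlety is that when a coefficient is negative, the corresponding term $c_i(x^{r_i}y^{s_i}-1)$ is still bounded above appropriately only if the exponent-balance condition is used correctly — so I would present the argument as a single application of the multivariate AM–GM to the positive part and handle the sign pattern once, rather than case by case.
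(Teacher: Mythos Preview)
Your AM--GM idea is sound and is genuinely different from the paper's proof, which proceeds by a two-step one-variable optimization: the paper first maximizes $v(x,y)$ over $x$ for fixed $y$ (the maximum is at $x=y^{(1-\be)/(2-\al)}$, since $\partial_1 v$ vanishes there), then maximizes the resulting $\tilde v(y)=v(y^{(1-\be)/(2-\al)},y)$ and finds $\tilde v'(y)$ changes sign only at $y=1$ when $\be>\al-1$ and is identically zero when $\be=\al-1$. Your approach reaches the same conclusion in one stroke, but several details in your sketch are off. First, the formula for $v$ is wrong; the correct expansion is
\[
v(x,y)=(\al-p)\,x^{\al-1}+(p-1)\,y^{\be}+(\be-q)\,xy^{\be-1}+(q-1),
\]
with $\al-p=\frac{(\al-1)\be}{\det J}>0$, $p-1=\frac{(\al-1)^2(\be-1)}{\det J}<0$, $\be-q=-\frac{(\al-1)^2\be}{\det J}<0$. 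In particular $v(1,1)=\al+\be-2=\tr J$ exactly, not merely up to a positive multiple.

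Second, the exponent balance $\sum c_i(r_i,s_i)=0$ is \emph{not} forced by $(1,1)$ being an equilibrium (that holds for every choice of $p,q$); it is forced by the specific $p,q$, which are chosen precisely so that $(\grad v)(1,1)=0$, and that gradient condition \emph{is} the balance. Third, your description of the boundary cases is inaccurate: on the edge $\be=2-\al$ none of the three monomial coefficients vanishes; the only change is $v(1,1)=\tr J=0$. What does vanish on the edge $\be=\al-1$ is the implicit ``fourth'' weight $c_1+c_2+c_3=\frac{(\al-1)(\be-\al+1)}{\det J}$ attached to the constant monomial $1$ in the weighted AM--GM inequality $c_1 M_1\le(-c_2)M_2+(-c_3)M_3+(c_1+c_2+c_3)\cdot 1$. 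When this weight is positive, equality in AM--GM forces $M_2=M_3=1$, hence $(x,y)=(1,1)$; when it is zero, equality only forces $M_2=M_3$, i.e.\ $y^\be=xy^{\be-1}$, i.e.\ $x=y$. With these corrections your argument goes through cleanly.
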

\begin{proof}
We first explain the choice of $p$ and $q$ in the definition of $h$. Since
\begin{align*}
v = \frac{\dive(hf,hg)}{h} 
= \partial_1 f + \partial_2 g + f\frac{\partial_1 h}{h} + g\frac{\partial_2 h}{h},
\end{align*}
we have $v(1,1) = \tr J$. 
Hence, for $\tr J = \al +\be- 2 =0$ (at the supercritical Andronov-Hopf bifurcation),
we have $v \leq 0$ on $\R^2_+$ only if $v$ has a local maximum at $(1,1)$.
A short calculation shows that $(\grad v)(1,1)=(0,0)$ uniquely determines $p$ and $q$.
For $\al + \be - 2<0$, other $p$ and $q$ may also be appropriate.

We now turn to the proof of the lemma.
We assume~\eqref{eq:triangle} and find
\begin{align*}
v(x,y) &= (\al - p) x^{\al - 1} + (p - 1)y^\be + (\be - q)xy^{\be-1} + q - 1 \quad \text{and} \\
(\partial_1v)(x,y) &= (\al-p)(\al-1)x^{\al-2} + (\be-q)y^{\be-1} \\
&= \frac{(\al-1)^2\be}{\det J}(x^{\al-2} - y^{\be-1}) ,
\end{align*}
where $\frac{(\al-1)^2\be}{\det J} > 0$.

For fixed $y > 0$, the function $x \mapsto v(x,y)$ has its maximum at $x = y^{\frac{1-\be}{2-\al}}$ and
\begin{align*}
\max_{(x,y)\in \R^2_+} v(x,y) = \max_{y > 0} \tilde v(y) \quad \text{with } \tilde v(y) = v \Big(y^\frac{1-\be}{2-\al},y\Big).
\end{align*}
We find
\begin{align*}
\tilde v(y) &= (\al-p+\be-q) y^{\frac{(\al-1)(1-\be)}{2-\al}}+(p-1) y^{\be} + q-1\\
&= \frac{(\al-1)(2-\al)\be}{\det J} y^{\frac{(\al-1)(1-\be)}{2-\al}} - \frac{(\al-1)^2(1-\be)}{\det J} y^{\be} + q-1 \quad \text{and} \\
\tilde v'(y) &= \frac{(\al-1)^2\be(1-\be)}{\det J}\left(y^{\frac{(\al-1)(1-\be)}{2-\al}-1} - y^{\be-1} \right) ,
\end{align*}
where $\frac{(\al-1)^2\be(1-\be)}{\det J} > 0$. 

If $\be > \al-1$, then $\frac{(\al-1)(1-\be)}{2-\al} - 1 < \be-1$
and hence $\tilde v(y)<v(1,1)$ for all $y>0$.
If $\be=\al-1$, then $\frac{(\al-1)(1-\be)}{2-\al} - 1 = \be - 1$
and hence $v(y,y)=v(1,1)$ for all $y>0$ and $v(x,y) < v(1,1)$ for $x\neq y$.
\qed
\end{proof}

\begin{lem} \label{lemma:triangle_no_escape}
Assume \eqref{eq:triangle}.
Then all solutions of the ODE~\eqref{eq:ode_two_exponents} starting in the positive quadrant are bounded.
\end{lem}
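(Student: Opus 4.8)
The plan is to use the nullcline geometry. Under \eqref{eq:triangle} the $x$-nullcline $y=x^{(\al-1)/\be}$ has exponent $\frac{\al-1}{\be}\in(0,1]$, the $y$-nullcline $y=x^{1/(1-\be)}$ has exponent $\frac{1}{1-\be}>1$, and (since $\al\be-\al+1>0$) the former lies below the latter for $x>1$, the two crossing at $(1,1)$. These curves cut $\R^2_+$ into the usual monotonicity cells, and I would first record two reductions. If $x$ stays bounded along an orbit, then $y$ stays bounded too, since $\dot y<0$ whenever $y>x^{1/(1-\be)}$. Moreover, an orbit can attain arbitrarily large $x$ only inside
\[
\mathcal I=\bigl\{(x,y)\in\R^2_+:\ x>1,\ y<x^{(\al-1)/\be}\bigr\},
\]
because $\dot x<0$ above the $x$-nullcline (so $x$ decreases there) while $x\le 1$ is trivially bounded; in $\mathcal I$ one has $\dot x>0$ and, being also below the $y$-nullcline, $\dot y>0$ and $xy^{\be-1}>1$.

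The core step is to show that an orbit cannot remain in $\mathcal I$ with $x\to\infty$ — in particular cannot blow up in finite time inside $\mathcal I$. On such a stretch I would use $x$ as the independent variable and set $\eta=\ln y-\frac{\al-1}{\be}\ln x$, so that $\eta<0$ throughout $\mathcal I$. A direct computation gives
\[
\frac{d\eta}{dx}=\frac1x\left(\frac{xy^{\be-1}-1}{x^{\al-1}-y^{\be}}-\frac{\al-1}{\be}\right),
\]
and in $\mathcal I$ the bounds $x^{\al-1}-y^{\be}\le x^{\al-1}$ and $xy^{\be-1}\ge x^{(\al\be-\al+1)/\be}$ yield
\[
\frac{d\eta}{dx}\ge\frac1x\left(x^{(\be-\al+1)/\be}-x^{1-\al}-\frac{\al-1}{\be}\right).
\]
If $\be>\al-1$, the exponent $\frac{\be-\al+1}{\be}$ is positive, so for $x$ large the right-hand side is at least $\frac12 x^{-(\al-1)/\be}$; since $\frac{\al-1}{\be}<1$, the integral $\int^{\infty}x^{-(\al-1)/\be}\,dx$ diverges, forcing $\eta\to+\infty$, which contradicts $\eta<0$. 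Hence the orbit leaves $\mathcal I$ at a bounded value of $x$.

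The boundary case $\be=\al-1$ (equivalently $\frac{\al-1}{\be}=1$, so $\mathcal I=\{x>1,\ y<x\}$) degenerates the last estimate, and I would treat it with $u=y/x$. Using $1+\be=\al$ one finds $\frac{d}{dt}\ln u=x^{\be}\bigl(u^{\be-1}+u^{\be}-1\bigr)-1$ and $\dot x=x^{\al}(1-u^{\be})$. Since $\be=\al-1\le\frac12$, the function $u\mapsto u^{\be-1}+u^{\be}-1$ is non-increasing on $(0,1]$ with value $1$ at $u=1$, hence $\ge 1$ on $(0,1]$; therefore in $\mathcal I$
\[
\frac{d\ln u}{dx}=\frac{x^{\be}(u^{\be-1}+u^{\be}-1)-1}{x^{\al}(1-u^{\be})}\ \ge\ \frac{x^{\be}-1}{x^{\al}}\ =\ x^{-1}-x^{-\al},
\]
whose integral again diverges, forcing $u$ past $1$, i.e. the orbit out of $\mathcal I$, at bounded $x$. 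Combining the two cases with the reductions of the first paragraph yields boundedness.

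I expect the main obstacle to be not the estimate but gluing it into a global statement. An orbit may enter $\mathcal I$ repeatedly — it can do so only by crossing $\{x=1\}$ with $y<1$ — and the abscissa at which it leaves $\mathcal I$ is controlled by how small $y$ is on entry, so one must bound $y$ from below at these crossings. This is where the fact that the $x$-axis with $x>0$ is repelling enters (for $x$ near $1$ and $y$ small one has $\dot y>0$, and a separable-type estimate bounds the value of $y$ at $x=1$ below by a positive constant independent of the orbit). Equivalently, one can package everything as a single compact forward-invariant absorbing set: bounded on the right by the region-$\mathcal I$ estimate, on the upper side by the curve $y=x^{\al}$ (which for $x\ge1$ is crossed only inwards, the flow on it having slope $-1$ while $\dot x<0$), and near the axes by the repelling behaviour of the $x$-axis together with the invariance of the $y$-axis. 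Writing out this absorbing set carefully is the step I would spend the most care on.
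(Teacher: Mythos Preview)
Your single-pass estimate --- showing that an orbit cannot remain in $\mathcal I=\{x>1,\ y<x^{(\al-1)/\be}\}$ with $x\to\infty$ --- is correct and takes a genuinely different route from the paper. The paper does not use your variable $\eta=\ln y-\tfrac{\al-1}{\be}\ln x$ or an integral-divergence argument; instead it constructs explicit barriers, namely scaled $y$-nullclines $y=cx^{1/(1-\be)}$ for $0<c\le c_0$, across which the flow points upward on the interval $0<x<c^{-\be(1-\be)/\det J}$, and shows (via a one-variable minimization in~$x$) that this holds for all small~$c$. Both methods yield the same conclusion; yours is slightly more analytic, the paper's more geometric, and in the boundary case $\be=\al-1$ both require a separate computation.

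The genuine gap is the recurrence step, and your two sketches do not close it. The paper does \emph{not} build a compact absorbing set or bound the entry value of $y$ at $x=1$; it simply invokes the Dulac function of Lemma~\ref{lemma:triangle_dulac} to preclude outward spiralling and is done. Without such an ingredient, knowing that each passage through $\mathcal I$ terminates at a finite abscissa does not rule out an unbounded orbit whose successive entry values $y_n\downarrow 0$ and exit abscissae $x_n\uparrow\infty$. Your first suggestion (a ``separable-type estimate'' bounding $y$ at $x=1$ from below) is exactly the outward-spiralling question in disguise and is not justified. Your second suggestion contains an error: on $y=x^{\al}$ with $x>1$ one has $\dot x<0$ and $\dot y=-\dot x>0$, and since the curve has slope $\al x^{\al-1}>1$ while the orbit has slope $-1$, the flow crosses $y=x^{\al}$ \emph{from below to above}; hence this curve cannot serve as the upper boundary of a forward-invariant region. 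The cleanest repair is to invoke Lemma~\ref{lemma:triangle_dulac} at this point, precisely as the paper does.
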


\begin{proof}
We claim that solutions of the ODE~\eqref{eq:ode_two_exponents} starting below the $x$-nullcline with $x>1$ eventually cross it,
see the left panel in Figure~\ref{fig:triangle_no_escape_no_approach_origin}.
Solutions above the $x$-nullcline with $x>1$ eventually cross the $y$-nullcline after which $y$ decreases,
see Figure~\ref{fig:streamplot_triangle}.
Since orbits spiraling outwards are precluded by Lemma~\ref{lemma:triangle_dulac}, all orbits are bounded.

To prove our claim,
we consider a starting point below the $x$-nullcline
and show that there exists a curve below the point
on which the vector field points upward and which eventually intersects the $x$-nullcline.
Formally,
we show that there exists $c_0$ with $0<c_0<1$ such that for all $c$ with $0<c \le c_0$ the following statement holds.
Along the curve $y=cx^{\frac{1}{1-\be}}$,
\begin{align*} 
\dd{y}{x} = \frac{\dot y}{\dot x} = \frac{g(x,cx^{\frac{1}{1-\be}})}{f(x,cx^{\frac{1}{1-\be}})} > \frac{c}{1-\be}x^{\frac{1}{1-\be}-1} = \dd{}{x} cx^{\frac{1}{1-\be}}
\quad \text{for all } 0 < x < c^{-\frac{\be(1-\be)}{\det J}} ,
\end{align*}
where $f(x,y) = x^\al - xy^\be$ and $g(x,y) = xy^\be -y$ denote the right-hand side of the ODE~\eqref{eq:ode_two_exponents}.

The assumptions~\eqref{eq:triangle} imply $0 < \frac{\al-1}{\be} \leq 1 < \frac{1}{1-\be}$.
Hence, the $x$-nullcline $y = x^{\frac{\al-1}{\be}}$ and the scaled $y$-nullcline $y = cx^{\frac{1}{1-\be}}$ intersect at $x=0$ and $x = c^{-\frac{\be(1-\be)}{\det J}}$.

A short calculation shows that the statement in question is equivalent to
\begin{align*}
p(x) > 0 \quad \text{ for all } 0 < x < c^{-\frac{\be(1-\be)}{\det J}}
\end{align*}
with
\begin{align*}
p(x) = x^{\frac{\be}{1-\be}} - x^{\al-1} c^{-\be} + (1-\be)(c^{-1} - c^{-\be}) .
\end{align*}
The function $p$ has its minimum at
\begin{align*}
\bar x = \left(\frac{\be-\det J}{\be}\right)^{\frac{1-\be}{\det J}} c^{-\frac{\be(1-\be)}{\det J}} ,
\end{align*}
where $0 < \bar x < c^{-\frac{\be(1-\be)}{\det J}}$ since $\det J<\be$.
We find
\begin{align*} 
p(\bar x)=-\frac{\det J}{\be-\det J} \left(\frac{\be-\det J}{\be}\right)^\frac{\be}{\det J} c^{-\frac{\be^2}{\det J}} + (1-\be)(c^{-1} - c^{-\be}),
\end{align*}
where the first summand is negative and the second summand is positive.

If $\be  > \al-1$, then $-\frac{\be^2}{\det J}>-1$. Therefore, $p(\bar x) \to \infty$ as $c \to 0$.
If $\be  = \al-1$, then $\det J=\be^2$. Therefore, 
\begin{align*}
p(\bar x) = \left(-\be(1-\be)^{\frac{1}{\be}-1}+(1-\be)\right)c^{-1}-(1-\be)c^{-\be} ,
\end{align*}
where the first summand is positive since $0<\be\le \frac{1}{2}$, and $p(\bar x) \to \infty$ as $c \to 0$.
\qed
\end{proof}

\begin{lem} \label{lemma:triangle_no_approach_origin}
Assume \eqref{eq:triangle}.
Then no solution of the ODE~\eqref{eq:ode_two_exponents} starting in the positive quadrant approaches the origin.
\end{lem}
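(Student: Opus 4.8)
The plan is to combine the Poincar\'e--Bendixson theorem with a Lyapunov function that blows up at the origin. First I would record the nullcline geometry near the origin. By~\eqref{eq:triangle} we have $0<\frac{\al-1}{\be}\le 1<\frac{1}{1-\be}$, so for $0<x<1$ the $x$-nullcline $y=x^{(\al-1)/\be}$ lies strictly above the $y$-nullcline $y=x^{1/(1-\be)}$; hence along the $x$-nullcline the vector field points straight down, and a solution can cross the $x$-nullcline only downward, into the region where $\dot x>0$. Second, since all solutions are bounded (Lemma~\ref{lemma:triangle_no_escape}) and the Bendixson--Dulac function of Lemma~\ref{lemma:triangle_dulac} excludes periodic orbits as well as homoclinic and heteroclinic cycles, while the only equilibria of~\eqref{eq:ode_two_exponents} in the closed quadrant are $(1,1)$ and the origin (and $(1,1)$ is asymptotically stable by Theorem~\ref{thm:alpha_beta_system_stability}~(ii)--(iii)), the $\omega$-limit set of every interior solution is a single equilibrium, $(1,1)$ or the origin. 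Thus it suffices to rule out that an interior solution converges to the origin.

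Suppose for contradiction that $(x(t),y(t))\to(0,0)$ for some interior solution. I would use
\[
V(x,y)=-\ln x+\tfrac{1}{\be}\,y^{\be},
\]
which is $C^1$ on $\R^2_+$ and tends to $+\infty$ as $(x,y)\to(0,0)$ in $\R^2_+$. Along~\eqref{eq:ode_two_exponents} a direct computation gives
\[
\dot V=-\frac{\dot x}{x}+y^{\be-1}\dot y=x\bigl(y^{2\be-1}-x^{\al-2}\bigr).
\]
Since $\al<2$ and, by~\eqref{eq:triangle}, $2\be-1>\al-2$, one checks: if $\be\ge\tfrac12$, then $\dot V<0$ on $\{0<x<1,\ 0<y<1\}$; if $\be<\tfrac12$, then $\dot V\le 0$ exactly on $\{\,y\ge x^{\mu}\,\}$, where $\mu=\frac{2-\al}{1-2\be}>\frac{\al-1}{\be}$, so that the set $\{\dot V>0\}$ is the thin sliver $B=\{0<y<x^{\mu}\}$, which for $0<x<1$ lies strictly below the $x$-nullcline and hence inside $\{\dot x>0\}$.

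Now I would run a trapping argument. As the solution converges to the origin it eventually stays in a square $U=\{0<x<\varepsilon,\ 0<y<\varepsilon\}$ with $\varepsilon<1$. If $\be\ge\tfrac12$ this forces $\dot V\le 0$ from that time on. If $\be<\tfrac12$, the solution cannot remain in $B$ (there $\dot x>0$, contradicting $x\to0$), so it enters $\{y\ge x^{\mu}\}$; moreover, evaluating $\frac{d}{dt}\bigl(y-x^{\mu}\bigr)$ on the curve $y=x^{\mu}$ for $x$ small shows that the field crosses this curve only upward, so the solution cannot return to $B$ and therefore eventually stays in $\{y\ge x^{\mu}\}$, where again $\dot V\le 0$. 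In either case $V$ is eventually non-increasing along the solution, contradicting $V(x(t),y(t))\to+\infty$. This proves the lemma.

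I expect the main obstacle to be the trapping step for $\be<\tfrac12$: controlling the excursions of the solution through the sliver $B$ (where $\dot V$ is positive and, a priori, unbounded) and verifying rigorously that the curve $y=x^{\mu}$ is crossed in only one direction near the origin. If this turns out to be delicate, an alternative in the spirit of Lemmas~\ref{lemma:a1_neg_b1_neg_a3_neg_b3_neg}--\ref{lemma:a1_pos_b1_neg_a3_neg_b3_pos} is to dispense with $V$ and instead build, for small $x_0>0$, a forward-invariant set of the form $\{x\ge x_0\}$ capped along a curve $y=c\,x^{\gamma}$ with $\gamma$ between the two nullcline exponents and $c$ small, using the same comparison of $\dd{y}{x}$ with the slope of the curve; the origin then lies outside this set, every solution eventually enters it, and convergence to the origin is again impossible.
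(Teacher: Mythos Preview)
Your argument is correct and takes a genuinely different route from the paper. The paper's proof is a direct barrier-curve argument: for a fixed $\gamma>\frac{\al-1}{\be}$ it shows that along every curve $y=cx^{\gamma}$ with $c>1$ and $y<\min(1,\gamma^{-1/\be})$ one has $\frac{\dot y}{\dot x}>\gamma\frac{y}{x}$ (equivalently $h(x,y)=xy^{\be-1}-\gamma x^{\al-1}<1-\gamma y^{\be}$, checked via convexity of $x\mapsto h(x,y)$). Hence a solution above the $x$-nullcline is pushed below any such curve and must meet the nullcline before $x$ reaches the intersection point $c^{-1/(\gamma-(\al-1)/\be)}>0$. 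This is precisely the alternative you sketch in your final paragraph.

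Your main argument instead uses the function $V=-\ln x+\tfrac{1}{\be}y^{\be}$ together with a trapping step. The step you flag as the obstacle does go through: on $y=x^{\mu}$ one finds
\[
\tfrac{d}{dt}\bigl(y-x^{\mu}\bigr)=x^{1+\mu\be}-x^{\mu}-\mu\,x^{\mu+\al-1}+\mu\,x^{\mu(1+\be)},
\]
and $1+\mu\be$ is strictly the smallest of the four exponents (the comparisons $1+\mu\be<\mu$, $1+\mu\be<\mu+\al-1$, $1+\mu\be<\mu(1+\be)$ reduce respectively to $\det J>0$, $\be>0$, and $\mu>1$, the last following from $\al\le 1+\be<1+2\be$). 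Thus the expression is positive for small $x$ and the curve $y=x^{\mu}$ is crossed only upward near the origin, so your trapping argument closes.

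The paper's approach is shorter and meshes with the curve-comparison style of Lemmas~\ref{lemma:a1_neg_b1_neg_a3_neg_b3_neg}--\ref{lemma:a1_pos_b1_neg_a3_neg_b3_pos} and Lemma~\ref{lemma:triangle_no_escape}; yours is more conceptual once $V$ is in hand, at the cost of the extra case analysis for $\be<\tfrac12$. One minor caveat: the Poincar\'e--Bendixson reduction in your first paragraph silently treats the origin as an equilibrium of a $C^1$ extension, which fails for $\be<1$ on the $x$-axis; but this is harmless, since your core contradiction argument directly shows that no interior solution converges to the origin, which is exactly the content of the lemma.
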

\begin{proof}
We show that solutions of the ODE~\eqref{eq:ode_two_exponents} starting above the $x$-nullcline with $y>0$ small enough eventually cross it,
see the right panel in Figure~\ref{fig:triangle_no_escape_no_approach_origin}.

Fix $\gamma > \frac{\al-1}{\be}$.
Every starting point $(x_0,y_0)$ above the $x$-nullcline $y=x^\frac{\al-1}{\be}$ with $y_0>0$ small enough
lies on a curve $y = c x^\gamma$ with $c>1$
on which the vector field points downward
and which eventually intersects the $x$-nullcline.
Indeed, the intersection occurs at $x=c^{-1/(\gamma - \frac{\al-1}{\be})}$ with $c=y_0/x_0^\gamma$.
Finally, we prove that, along any curve $y = c x^\gamma$ with $c>1$,
\begin{align*}
\dd{y}{x} = \frac{\dot y}{\dot x} = \frac{xy^\be-y}{x^\al-xy^\be} > \gamma\frac{y}{x} = \dd{}{x} c x^\gamma \quad \text{for all } x^\frac{\al-1}{\be} < y < \min(1,\gamma^{-\frac{1}{\be}})
\end{align*}
or, equivalently,
\begin{align*}
h(x,y) < 1 - \gamma y^\be \quad \text{for all } x^\frac{\al-1}{\be} < y < \min(1,\gamma^{-\frac{1}{\be}})
\end{align*}
with
\begin{align*}
h(x,y) = xy^{\be-1}-\gamma x^{\al-1} .
\end{align*}
We have
\begin{alignat*}{3}
& h(0,y)=0 < 1-\gamma y^\be, && \text{if } 0 < y < \gamma^{-\frac{1}{\be}} \quad \text{and} \\
& h(y^\frac{\be}{\al-1},y) = y^{\frac{\be}{\al-1}+\be-1} -\gamma y^\be < 1 - \gamma y^\be, \quad && \text{if } 0 < y < 1 .
\end{alignat*}
Since the function $x \mapsto h(x,y)$ is convex for $1 < \al < 2$,
we obtain $h(x,y) < 1-\gamma y^\be$ if $0 < x < y^{\frac{\be}{\al-1}}$ and $0 < y < \min(1,\gamma^{-\frac{1}{\be}})$. \qed
\end{proof}


\begin{acknowledgements}
We thank Georg Regensburger, Valerij Romanovskij, and J\'anos T\'oth for fruitful discussions.
BB and SM were supported by the Austrian Science Fund (FWF), project P28406.
\end{acknowledgements}


\bibliographystyle{abbrv}
\bibliography{stability}

\begin{thebibliography}{10}

\bibitem{andronov:leontovich:1937}
A.~A. Andronov and E.~A. Leontovich.
\newblock Some cases of dependence of limit cycles on a parameter.
\newblock {\em Uchenye zapiski Gorkovskogo Universiteta}, 6:3--24, 1937.

\bibitem{andronov:leontovich:gordon:maier:1973}
A.~A. Andronov, E.~A. Leontovich, I.~I. Gordon, and A.~G. Ma{\u\i}er.
\newblock {\em Theory of bifurcations of dynamic systems on a plane}.
\newblock Halsted Press [A division of John Wiley \& Sons], New York-Toronto,
  Ont.; Israel Program for Scientific Translations, Jerusalem-London, 1973.
\newblock Translated from the Russian.

\bibitem{dancso:farkas:farkas:szabo:1991}
A.~Dancs{\'o}, H.~Farkas, M.~Farkas, and G.~Szab{\'o}.
\newblock Investigations into a class of generalized two-dimensional
  {L}otka-{V}olterra schemes.
\newblock {\em Acta Appl. Math.}, 23(2):103--127, 1991.

\bibitem{farkas:noszticzius:1985}
H.~Farkas and Z.~Noszticzius.
\newblock Generalized {L}otka-{V}olterra schemes and the construction of
  two-dimensional explodator cores and their {L}iapunov functions via
  ``critical'' {H}opf bifurcations.
\newblock {\em J. Chem. Soc. Faraday Trans. II}, 81(10):1487--1505, 1985.

\bibitem{farkas:1994}
M.~Farkas.
\newblock {\em Periodic Motions}, volume 104 of {\em Applied Mathematical
  Sciences}.
\newblock Springer-Verlag, New York, 1994.

\bibitem{frommer:1934}
M.~Frommer.
\newblock \"{U}ber das {A}uftreten von {W}irbeln und {S}trudeln (geschlossener
  und spiraliger {I}ntegralkurven) in der {U}mgebung rationaler
  {U}nbestimmtheitsstellen.
\newblock {\em Math. Ann.}, 109(1):395--424, 1934.

\bibitem{guckenheimer:holmes:1990}
J.~Guckenheimer and P.~Holmes.
\newblock {\em Nonlinear oscillations, dynamical systems, and bifurcations of
  vector fields}, volume~42 of {\em Applied Mathematical Sciences}.
\newblock Springer-Verlag, New York, 1990.
\newblock Revised and corrected reprint of the 1983 original.

\bibitem{hopf:1942}
E.~Hopf.
\newblock Abzweigung einer periodischen {L}{\"o}sung von einer station{\"a}ren
  {L}{\"o}sung eines {D}ifferentialsystems.
\newblock {\em Ber. S{\"a}chs. Akad. Wiss. Leipzig, Math.-Phys. Kl.}, 94:1--22,
  1942.

\bibitem{llibre:2016}
J.~Llibre.
\newblock A counterexample to a result on {L}otka-{V}olterra systems.
\newblock {\em Acta Applicandae Mathematicae}, 142(1):123--125, 2016.

\bibitem{lotka:1910}
A.~J. Lotka.
\newblock Contribution to the theory of periodic reactions.
\newblock {\em J. Phys. Chem.}, 14(3):271--274, 1910.

\bibitem{lotka:1920:a}
A.~J. Lotka.
\newblock Analytical note on certain rhythmic relations in organic systems.
\newblock {\em Proc. Natl. Acad. Sci.}, 6(7):410--415, 1920.

\bibitem{lotka:1920:b}
A.~J. Lotka.
\newblock Undamped oscillations derived from the law of mass action.
\newblock {\em J. Am. Chem. Soc.}, 42:1595--1599, 1920.

\bibitem{mueller:regensburger:2012}
S.~M\"uller and G.~Regensburger.
\newblock Generalized mass action systems: {C}omplex balancing equilibria and
  sign vectors of the stoichiometric and kinetic-order subspaces.
\newblock {\em SIAM J. Appl. Math.}, 72:1926--1947, 2012.

\bibitem{mueller:regensburger:2014}
S.~M\"uller and G.~Regensburger.
\newblock Generalized mass-action systems and positive solutions of polynomial
  equations with real and symbolic exponents.
\newblock In V.~P. Gerdt, W.~Koepf, E.~W. Mayr, and E.~H. Vorozhtsov, editors,
  {\em Computer Algebra in Scientific Computing. Proceedings of the 16th
  International Workshop (CASC 2014)}, volume 8660 of {\em Lecture Notes in
  Comput. Sci.}, pages 302--323, Berlin/Heidelbergx, 2014. Springer.

\bibitem{yin:voit:2008}
W.~Yin and E.~O. Voit.
\newblock Construction and customization of stable oscillation models in
  biology.
\newblock {\em Journal of Biological Systems}, 16(04):463--478, 2008.

\end{thebibliography}

\section*{Appendix A: First focal value}

At an Andronov-Hopf bifurcation of a planar ODE,
the Jacobian matrix has zero trace and positive determinant.

For a planar ODE
\begin{align} \label{eqn_ode}
\dot x &= f(x,y) , \\ 
\dot y &= g(x,y) \nonumber 
\end{align}
with equilbrium at the origin and Jacobian matrix in normal form,
\[
J = \begin{pmatrix} 0 & -\omega \\ \omega & 0 \end{pmatrix} ,
\]
the first focal value (erste Strudelgr\"o{\ss}e, cf.~\cite{frommer:1934}), called $D_1$ here,
is given by
\begin{align} \label{GH}
16 D_1 &= f_{30}+f_{12}+g_{21}+g_{03} \\
& \quad + \frac{1}{\omega} \left[ f_{11}(f_{20}+f_{02})-g_{11}(g_{20}+g_{02})-f_{20}g_{20}+f_{02}g_{02} \right] , \nonumber
\end{align}
where $f_{30}=(\partial^3f/\partial x^3)$, $f_{12}=(\partial^3f/\partial x\partial y^2)$, etc.
See, for example,
\cite[p.~252, eqn.~(71)]{andronov:leontovich:gordon:maier:1973},
\cite[p.~431, Lemma 7.2.7]{farkas:1994}, or
\cite[p.~152, eqn.~(3.4.11)]{guckenheimer:holmes:1990}.

For the planar ODE~\eqref{eqn_ode}
with equilibrium at the origin and Jacobian matrix
\[
J = \begin{pmatrix} a & b \\ c & -a \end{pmatrix} ,
\]
we define the linear transformation
\[
\begin{pmatrix}
\tilde x \\ \tilde y
\end{pmatrix}
=
T
\begin{pmatrix}
x \\ y
\end{pmatrix}
\quad \text{with} \quad
T=
\begin{pmatrix}
1 & 0 \\
-\frac{a}{\omega} & -\frac{b}{\omega} 
\end{pmatrix}
\]
and $\omega = \sqrt{\det J} = \sqrt{-a^2-bc}$.
Introducing $h \colon (x,y)^\mathsf{T} \mapsto (f(x,y),g(x,y))^\mathsf{T}$,
we obtain the transformed ODE system
\[
\begin{pmatrix}
\tilde x \\ \tilde y
\end{pmatrix}
^{\textstyle \cdot}
=
T
\begin{pmatrix}
x \\ y
\end{pmatrix}
^{\textstyle \cdot}
=
T \, h \!
\begin{pmatrix}
x \\ y
\end{pmatrix}
=
T \, h \! \left( T^{-1}
\begin{pmatrix}
\tilde x \\ \tilde y
\end{pmatrix}
\right)
\]
with Jacobian matrix $\tilde J = T J \, T^{-1}$ in normal form.
Using~\eqref{GH}, we compute the first focal value $D_1$ of the transformed system
expressed in terms of derivatives of the original system.
We obtain
\begin{align} \label{strudel}
16 \, b^2 \omega^2 D_1 
=& \, b \left\{ \omega^2 \left[
-2 a (f_{21} + g_{12}) + b (f_{30} + g_{21}) - c (f_{12} + g_{03})
\right] \right. \\
& \quad + ab \left[
+ f_{20}^2 - f_{20} g_{11} - f_{11} g_{20} - g_{20} g_{02} - 2 g_{11}^2
\right] \nonumber \\
& \quad + ac \left[
-f_{20} f_{02} - 2 f_{11}^2 - f_{11} g_{02} - f_{02} g_{11} + g_{02}^2
\right] \nonumber \\
& \quad
+(b c - 2 a^2) [f_{20} f_{11} - g_{11} g_{02}]
\nonumber \\
& \quad 
+ \left. b^2 g_{20} (f_{20} + g_{11}) - c^2 f_{02} (f_{11} + g_{02}) 
\right\} \nonumber ,
\end{align}
See again~\cite[p.~253, eqn.~(76)]{andronov:leontovich:gordon:maier:1973},
where series coefficients are used instead of partial derivatives.

If the equilibrium under consideration differs from the origin,
then the derivatives need to be evaluated at the equilibrium.


\section*{Appendix B: Figures}

In order to illustrate our analysis of the ODE~\eqref{ode_math} in Section~\ref{sec:proofs_of_main_results},
we present phase portraits and figures of forward invariant sets.
Thereby, the red curve is the $x$-nullcline, $x^{a_1}y^{b_1}=\frac{k_2}{k_1}$, while the green curve is the $y$-nullcline, $x^{a_3}y^{b_3}=\frac{k_3}{k_4}$.

\begin{figure}[h!t!]
\begin{center}
\includegraphics[scale=.35]{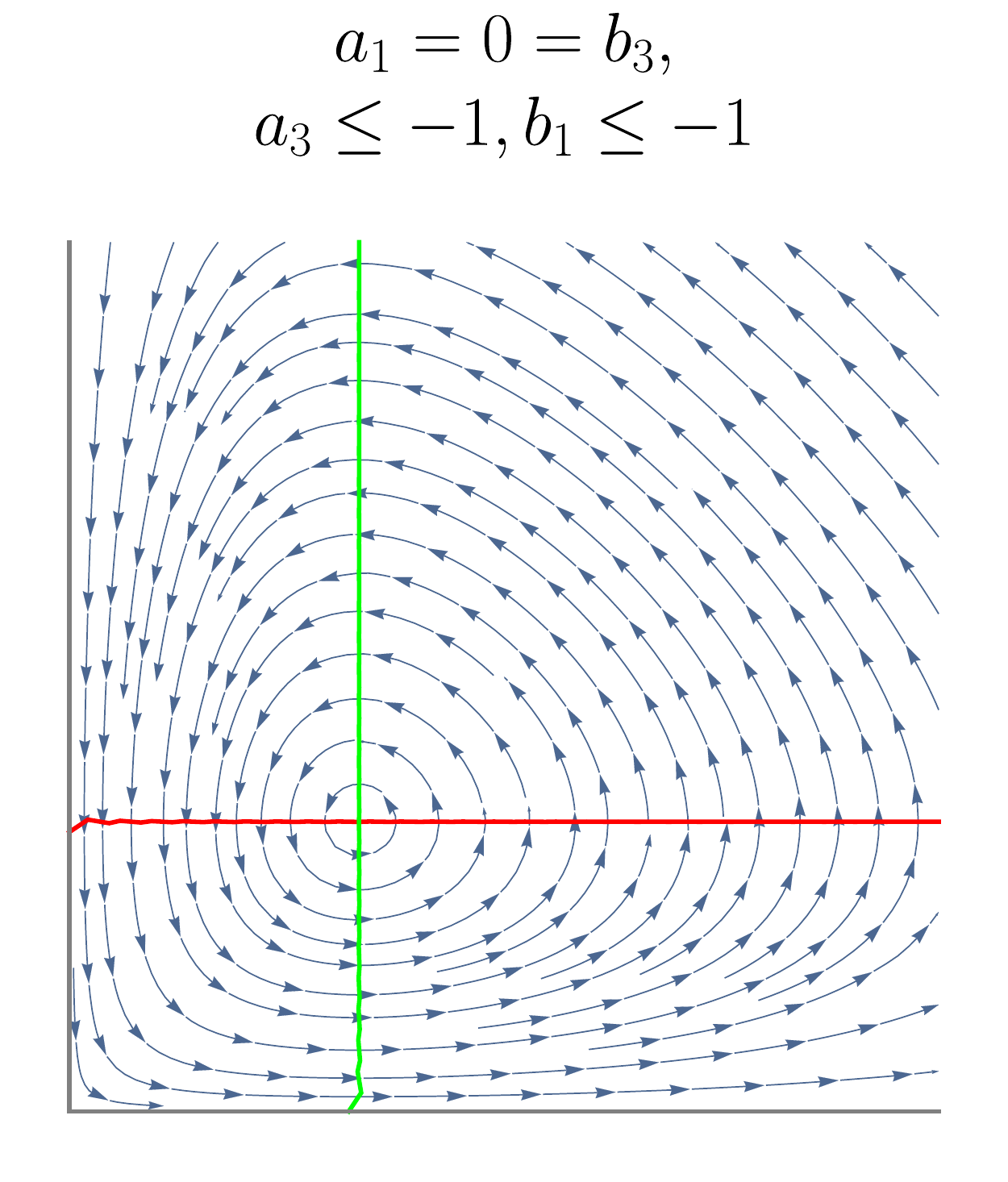}
\end{center}
\caption{Case $(a_1,b_3)=(0,0)$, $a_3\leq -1$, and $b_1\leq -1$. All solutions starting in $\R^2_+$ are periodic.}
\label{fig:streamplot_center_stay_away_from_the_boundary}
\end{figure}

\begin{figure}
\begin{center}
\begin{tabular}{cc}
\includegraphics[scale=.35]{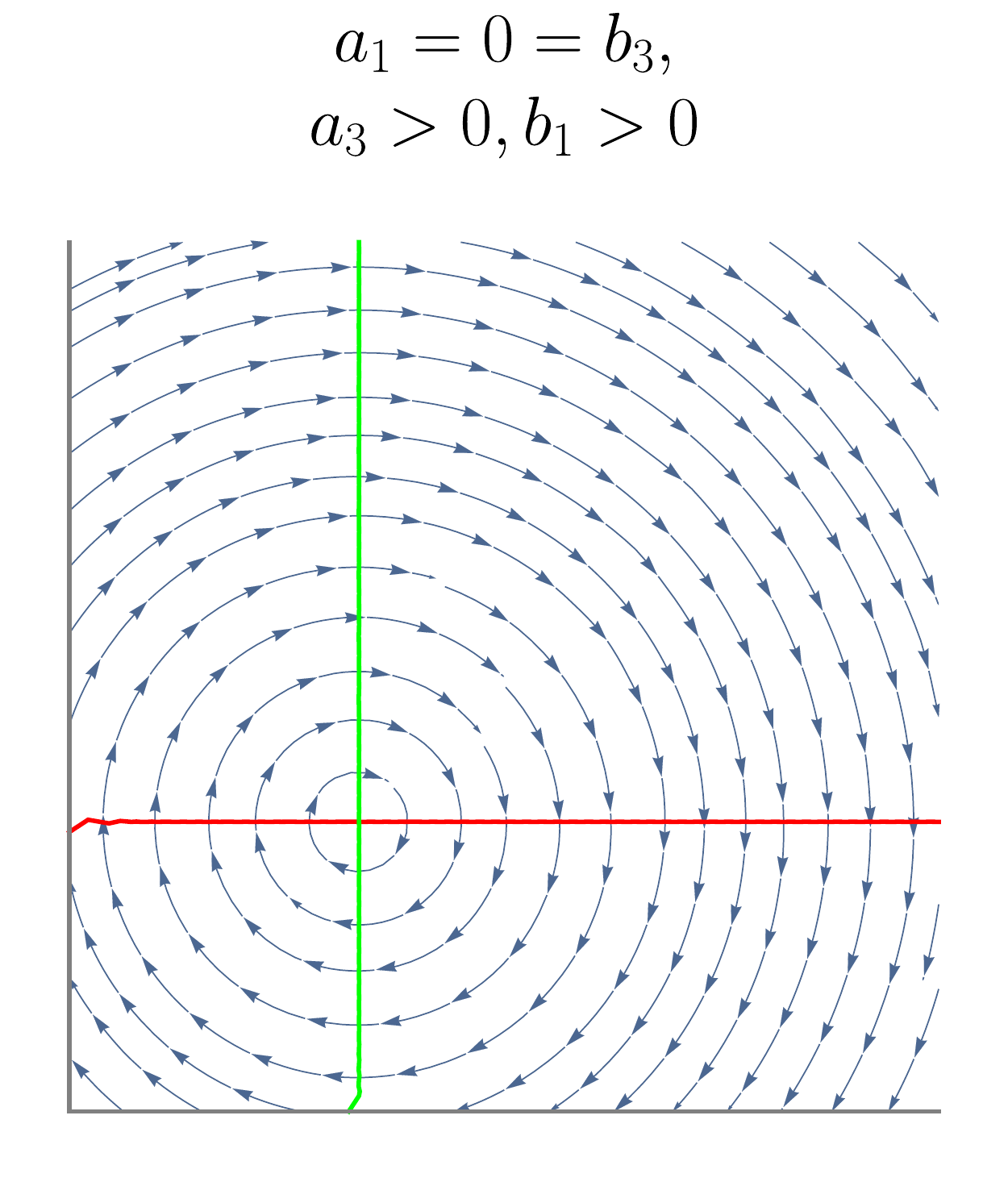} &
\includegraphics[scale=.35]{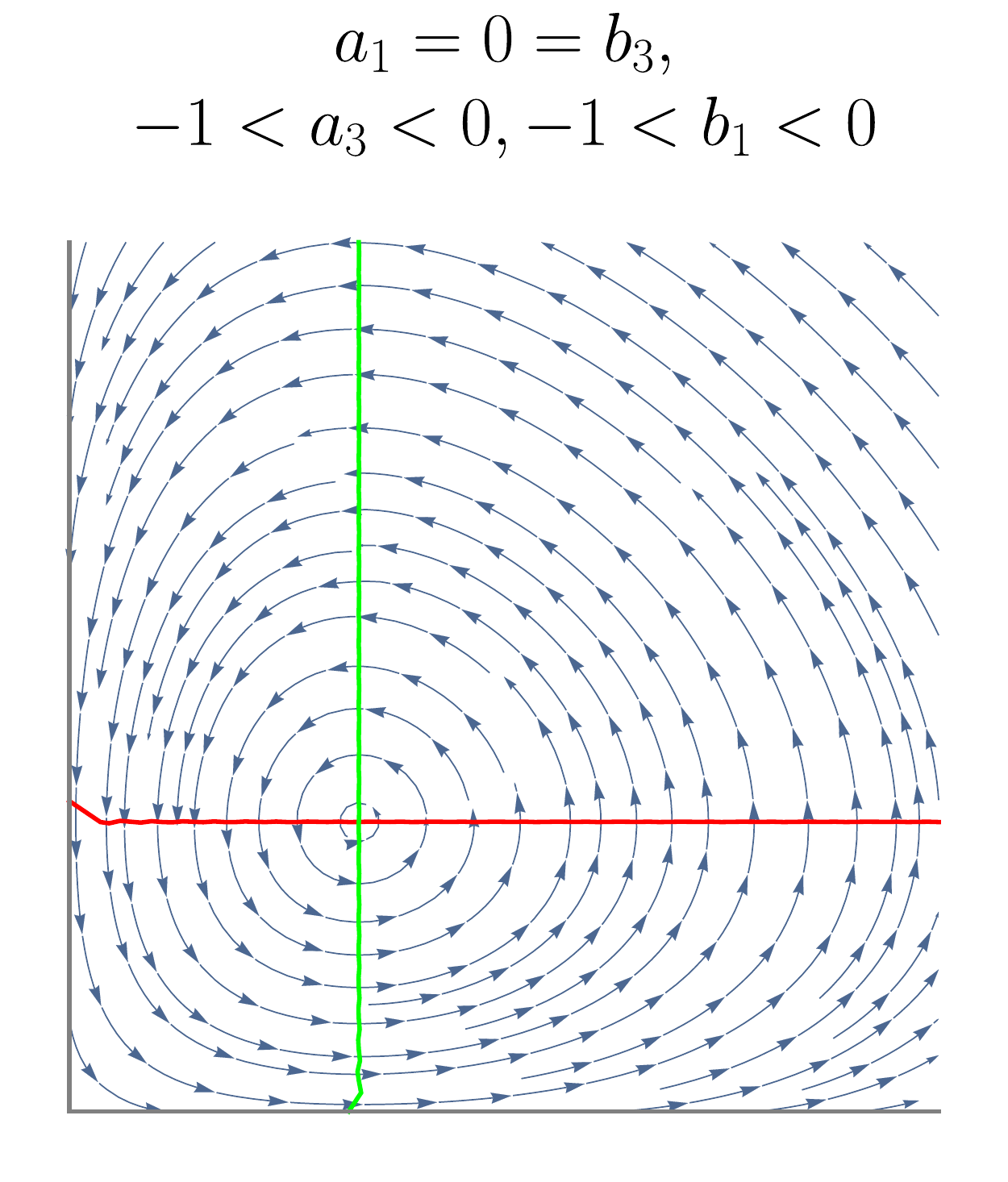} \\
\includegraphics[scale=.35]{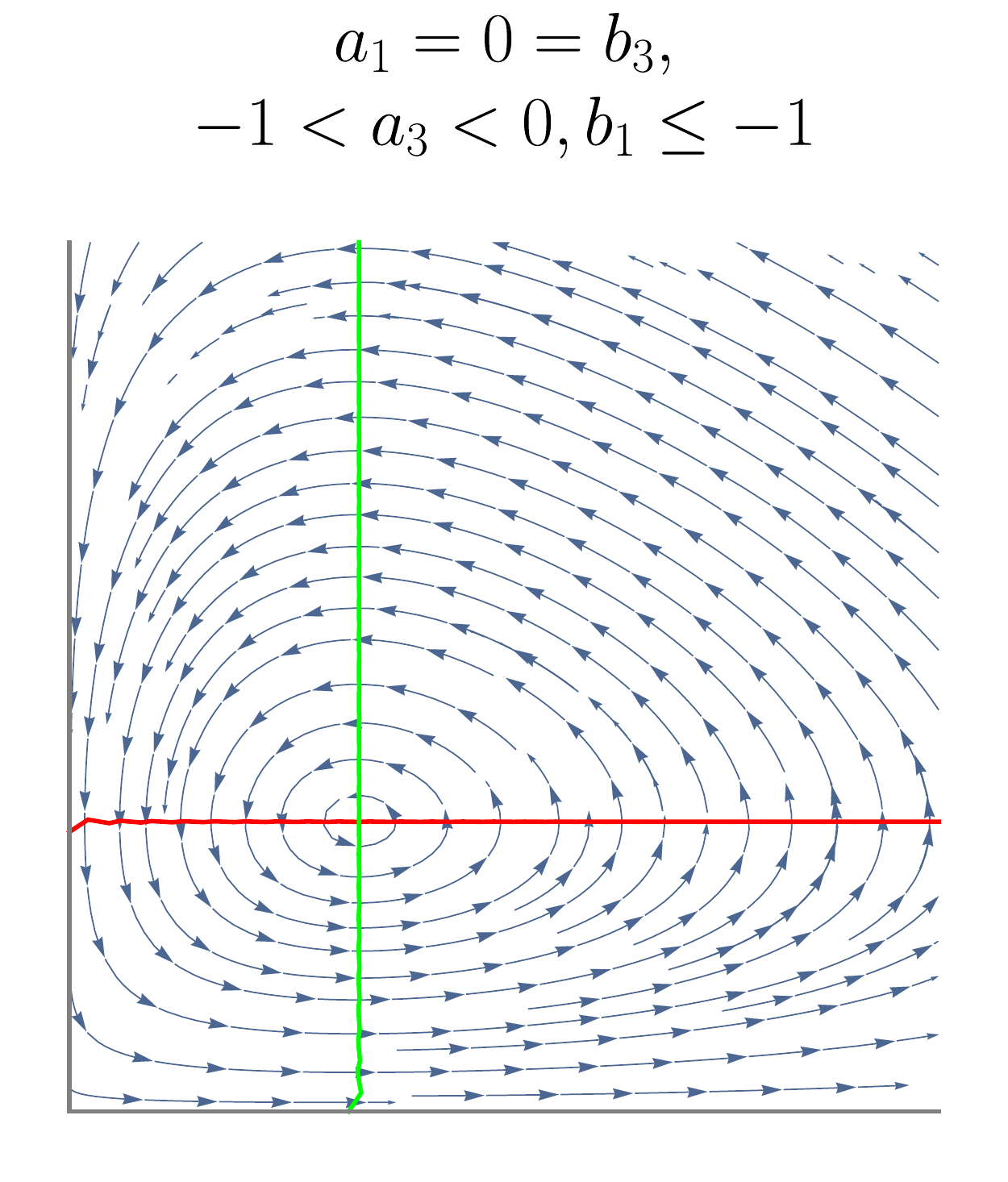} &
\includegraphics[scale=.35]{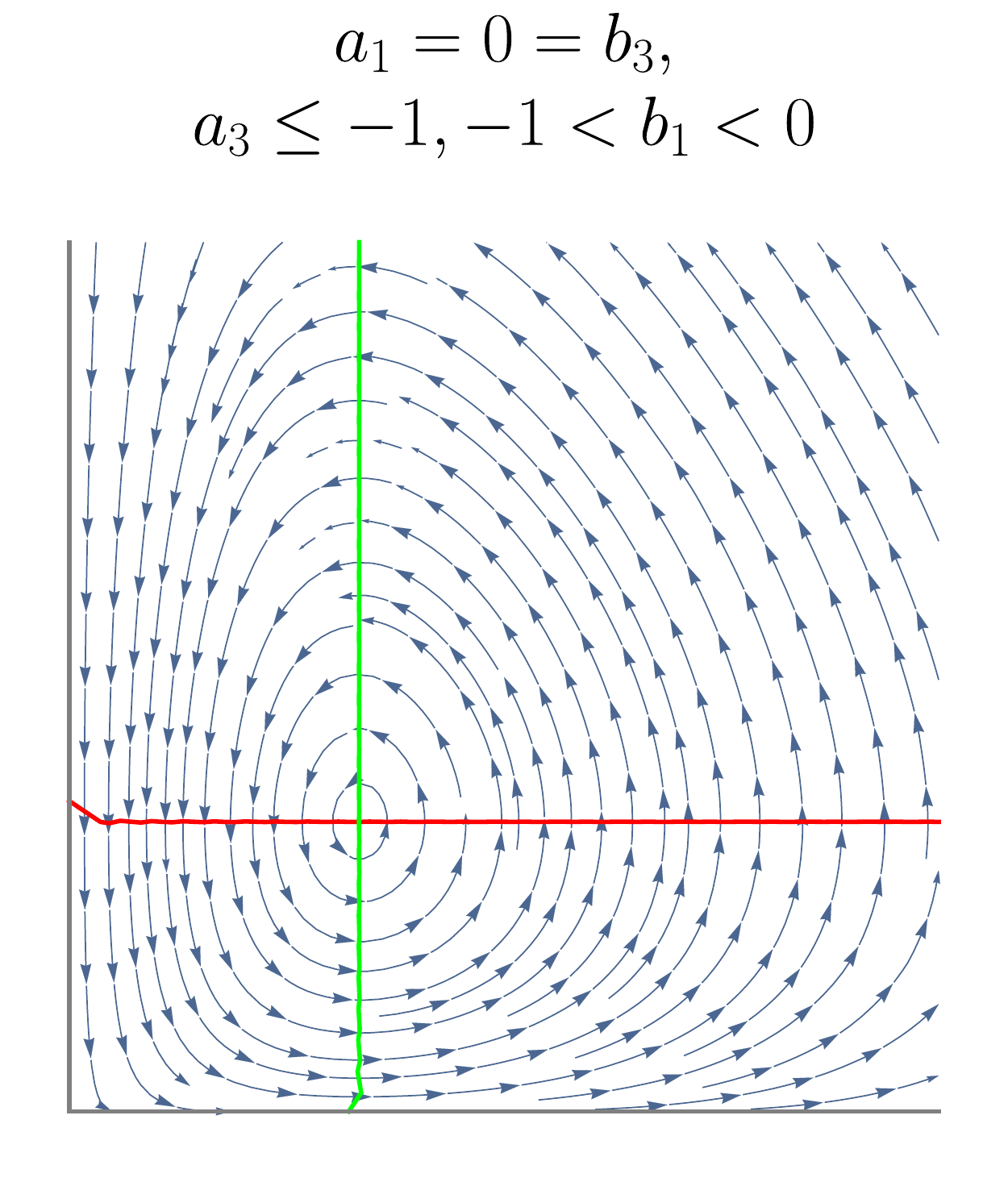}
\end{tabular}
\end{center}
\caption{Case $(a_1,b_3)=(0,0)$, $a_3b_1 > 0$, and at least one of $a_3\leq -1$ and $b_1\leq -1$ is violated.
Top left: $a_3>0$ and $b_1>0$ (both the $x$-axis and the $y$-axis are approached by some orbits).
Top right: $-1<a_3<0$ and $-1<b_1<0$ (both the $x$-axis and the $y$-axis are approached by some orbits).
Bottom left: $-1<a_3<0$ and $b_1\leq -1$ (the $y$-axis is approached by some orbits).
Bottom right: $a_3 \leq -1$ and $-1<b_1<0$ (the $x$-axis is approached by some orbits).}
\label{fig:streamplots_center_meet_the_boundary}
\end{figure}

\begin{figure}
\begin{center}
\begin{tabular}{ccc}
\includegraphics[scale=.28]{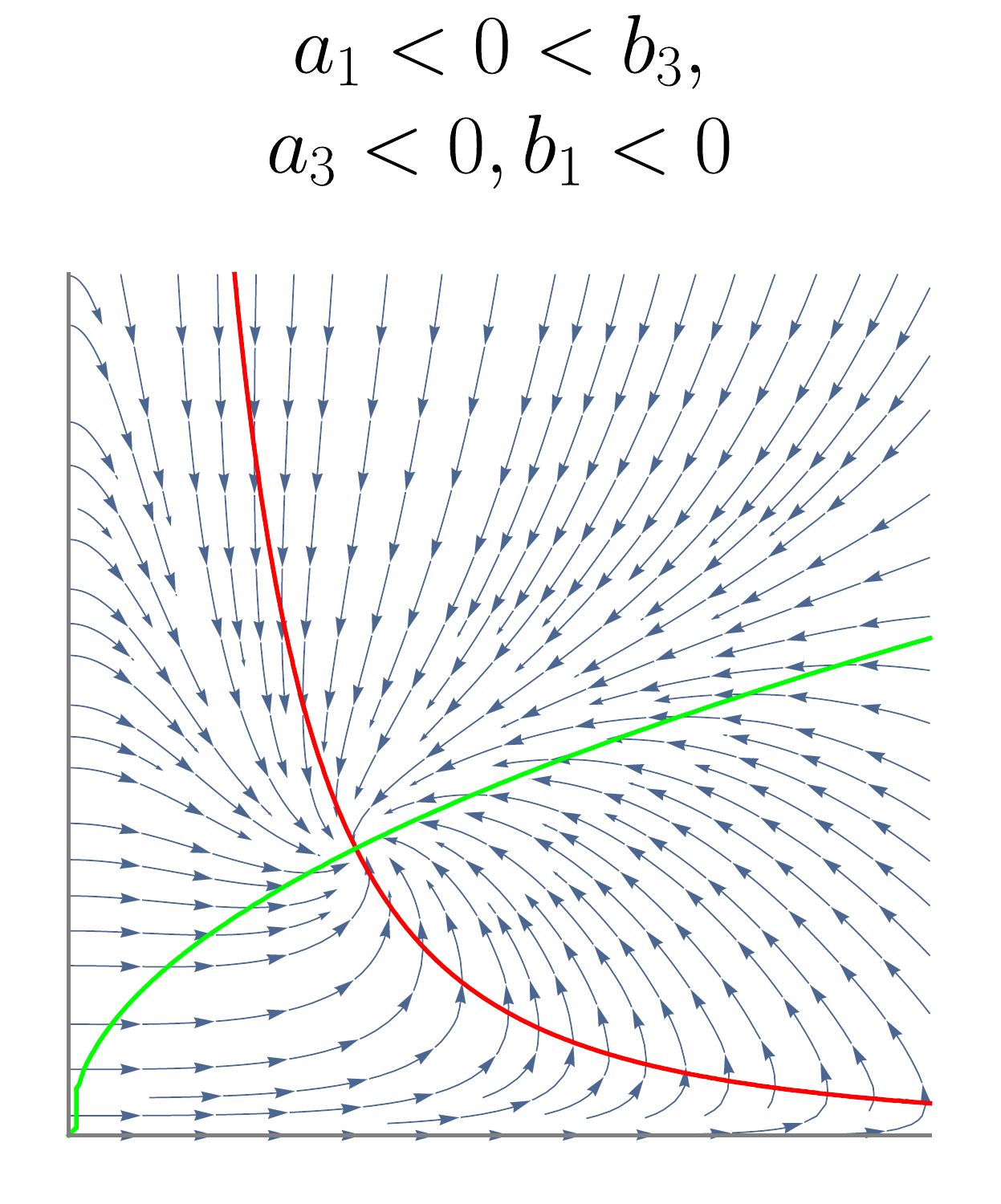} &
\includegraphics[scale=.28]{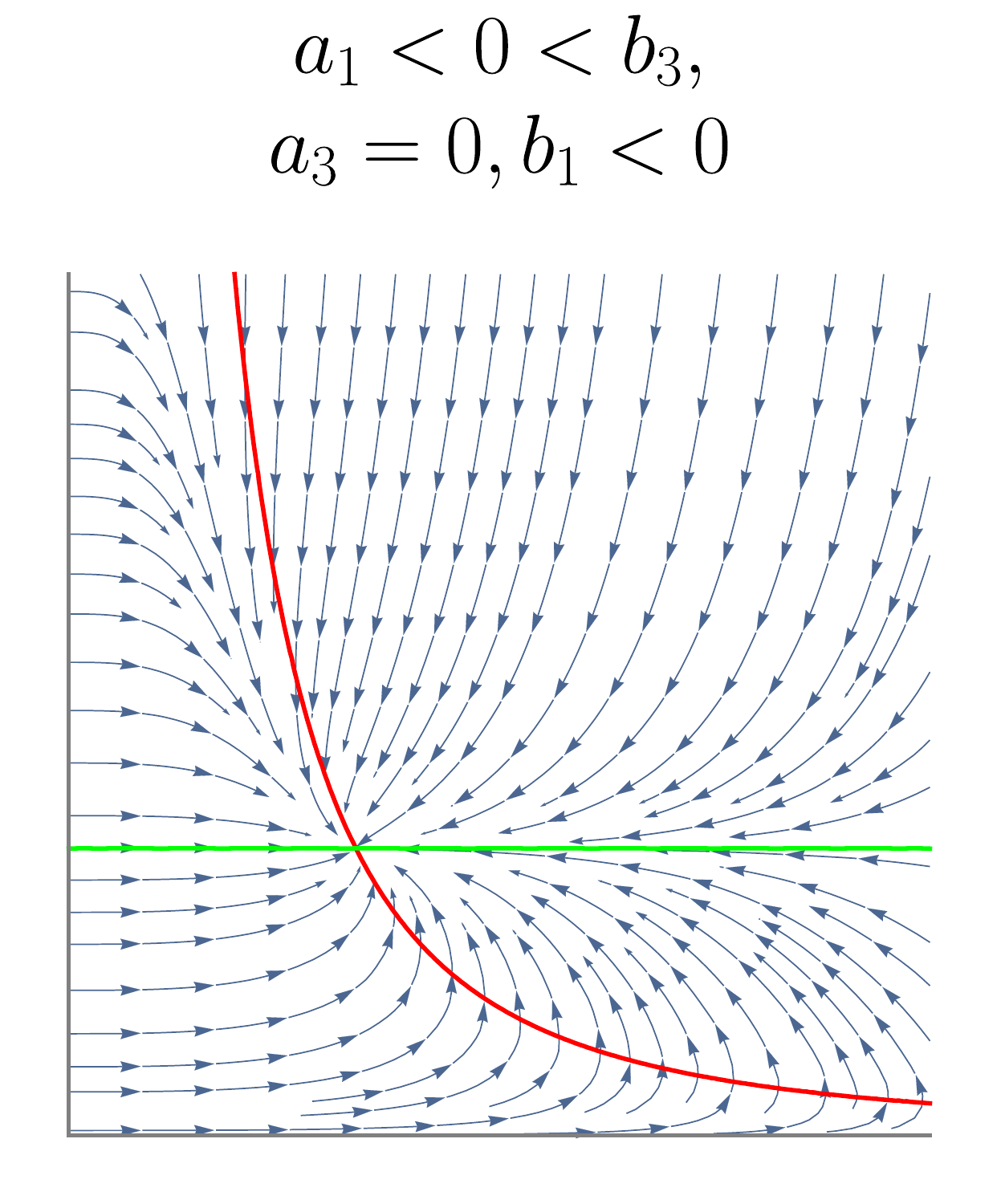} &
\includegraphics[scale=.28]{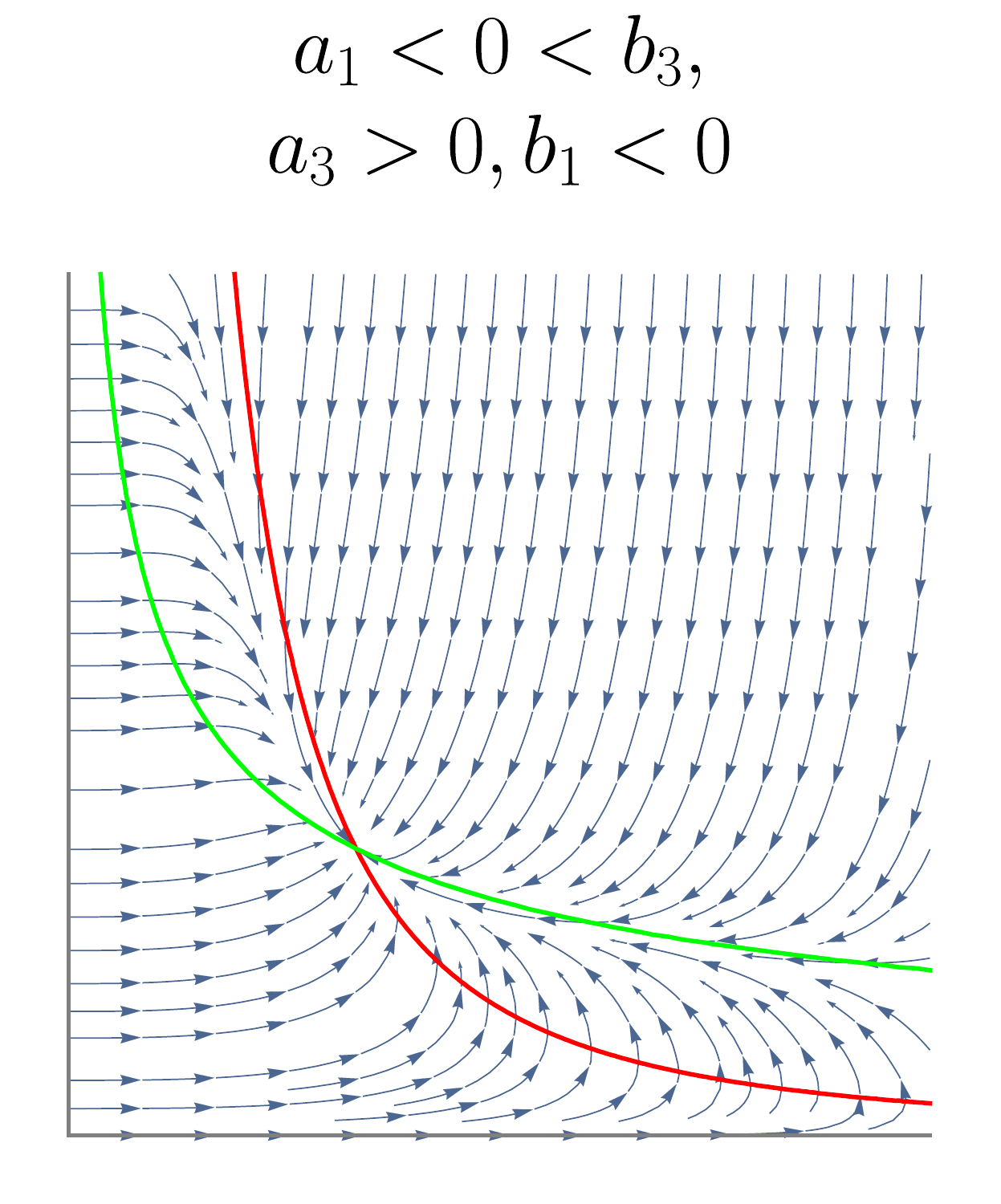} \\
\includegraphics[scale=.28]{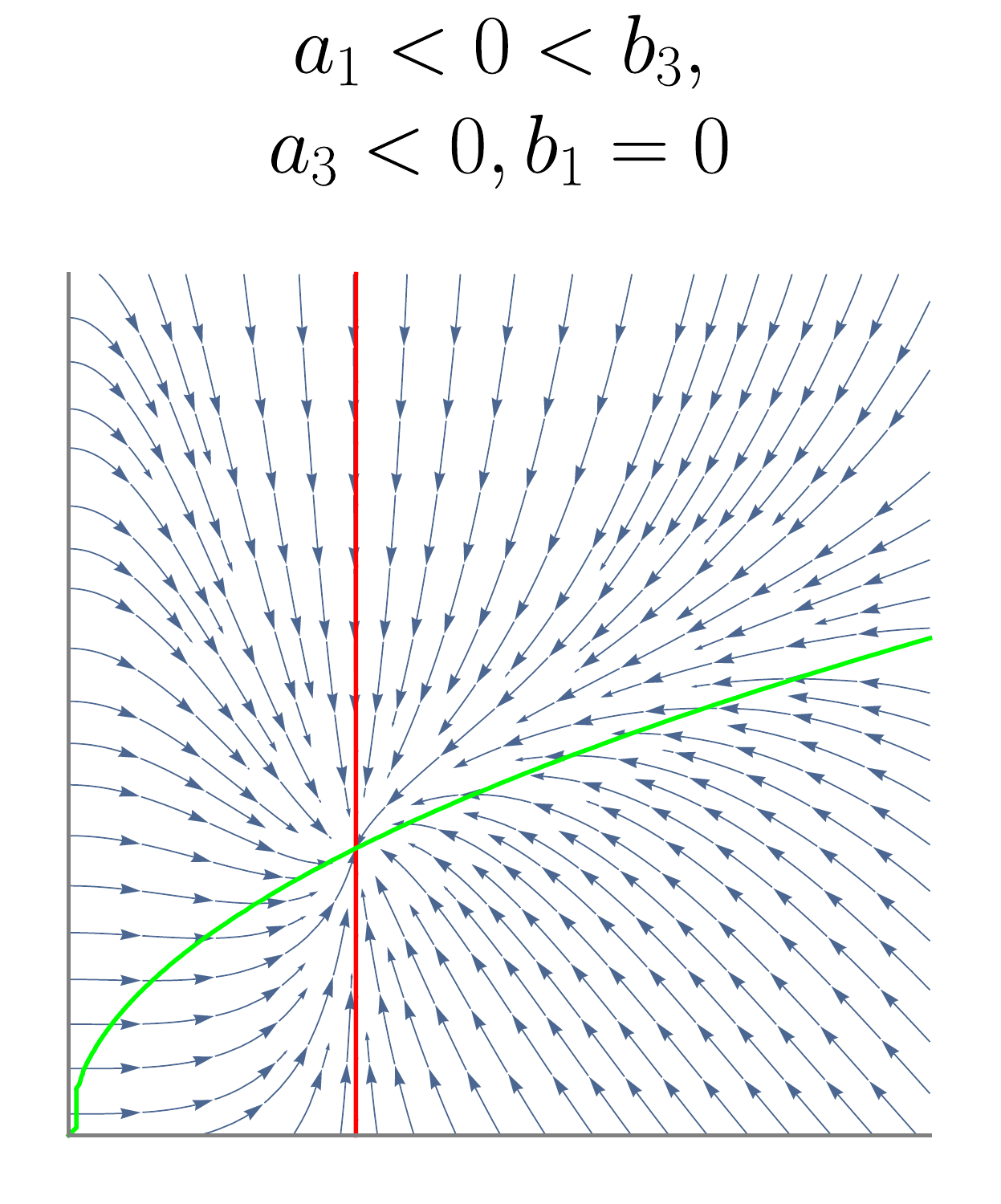} &
\includegraphics[scale=.28]{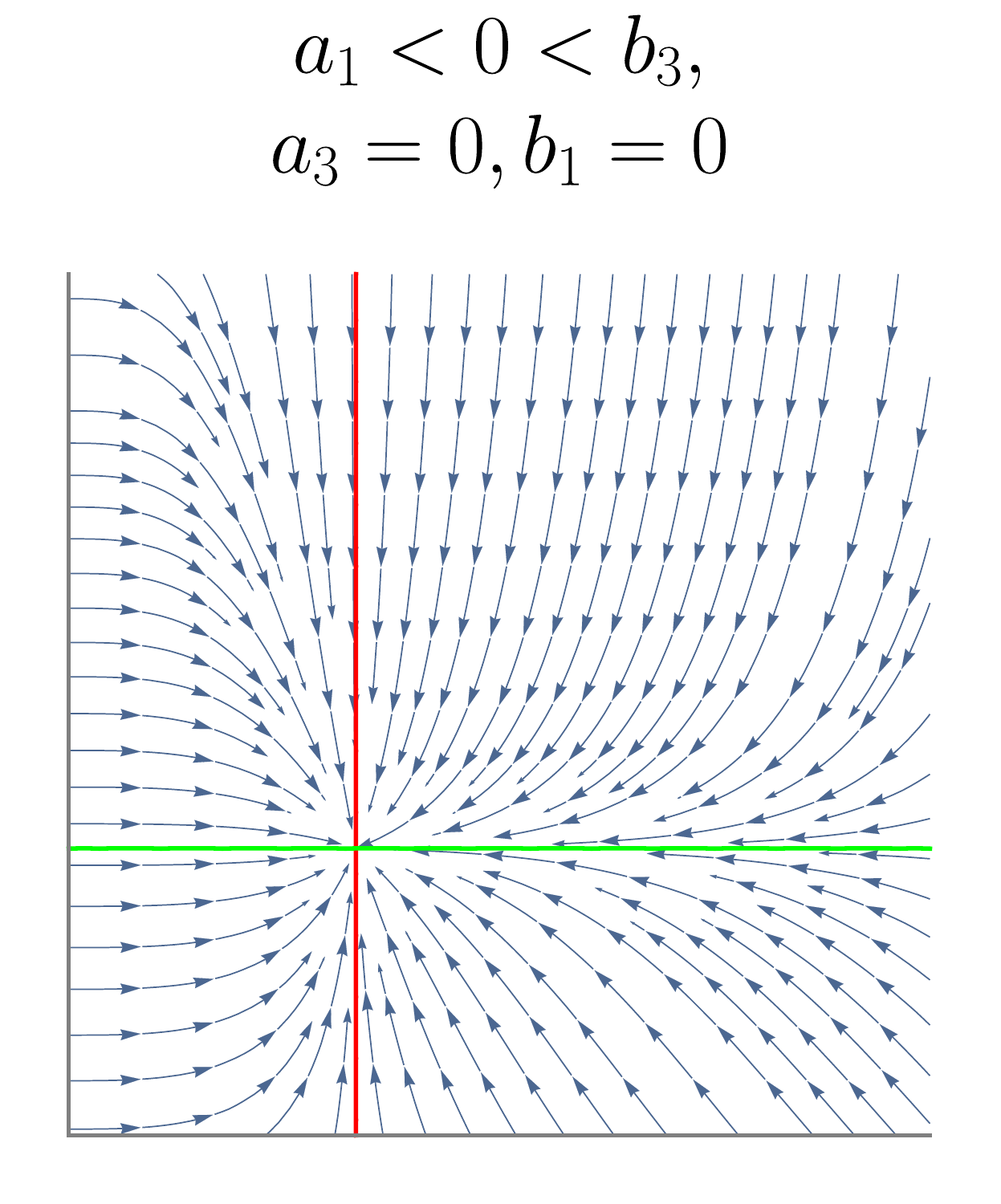} &
\includegraphics[scale=.28]{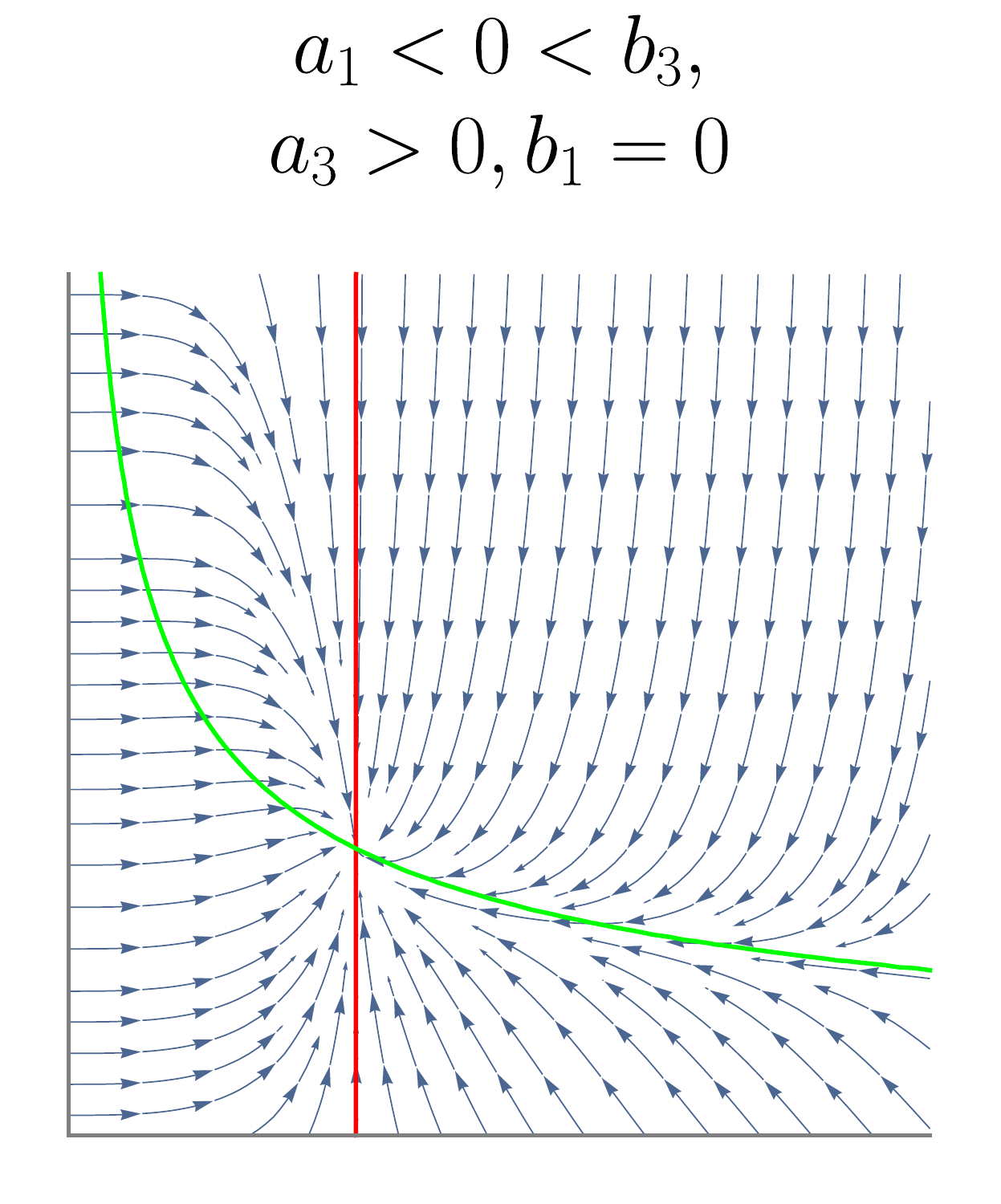} \\
\includegraphics[scale=.28]{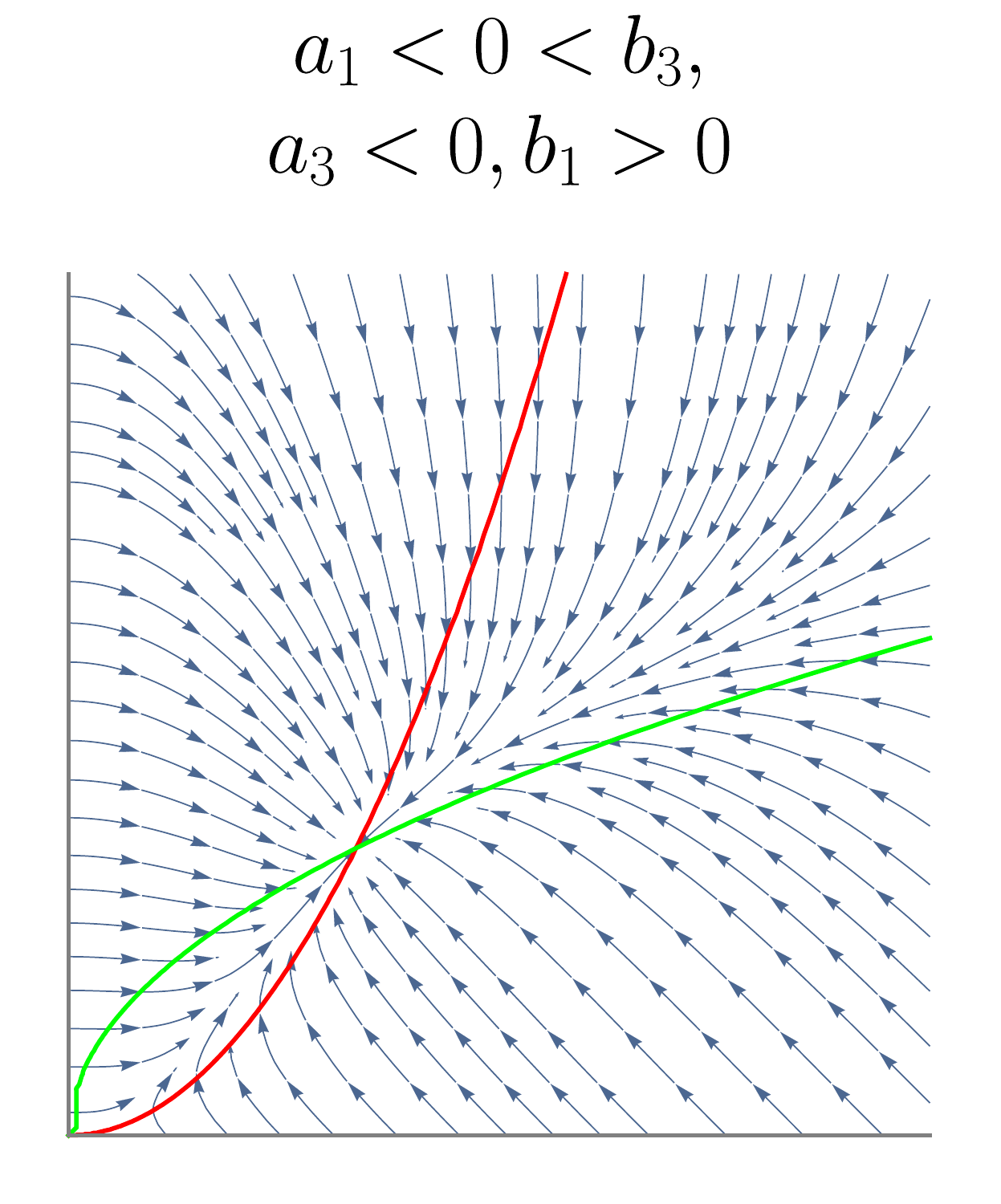} &
\includegraphics[scale=.28]{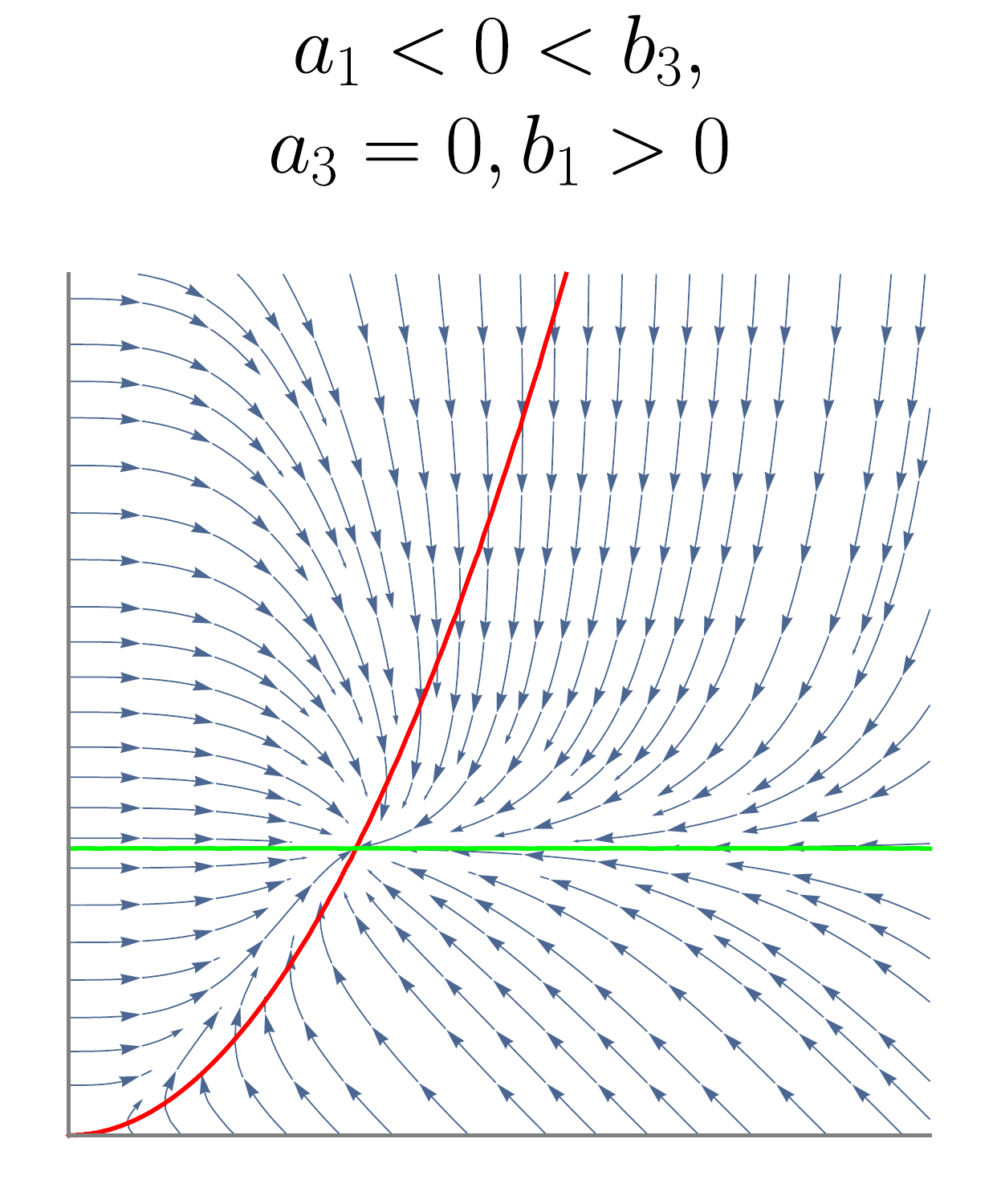} &
\includegraphics[scale=.28]{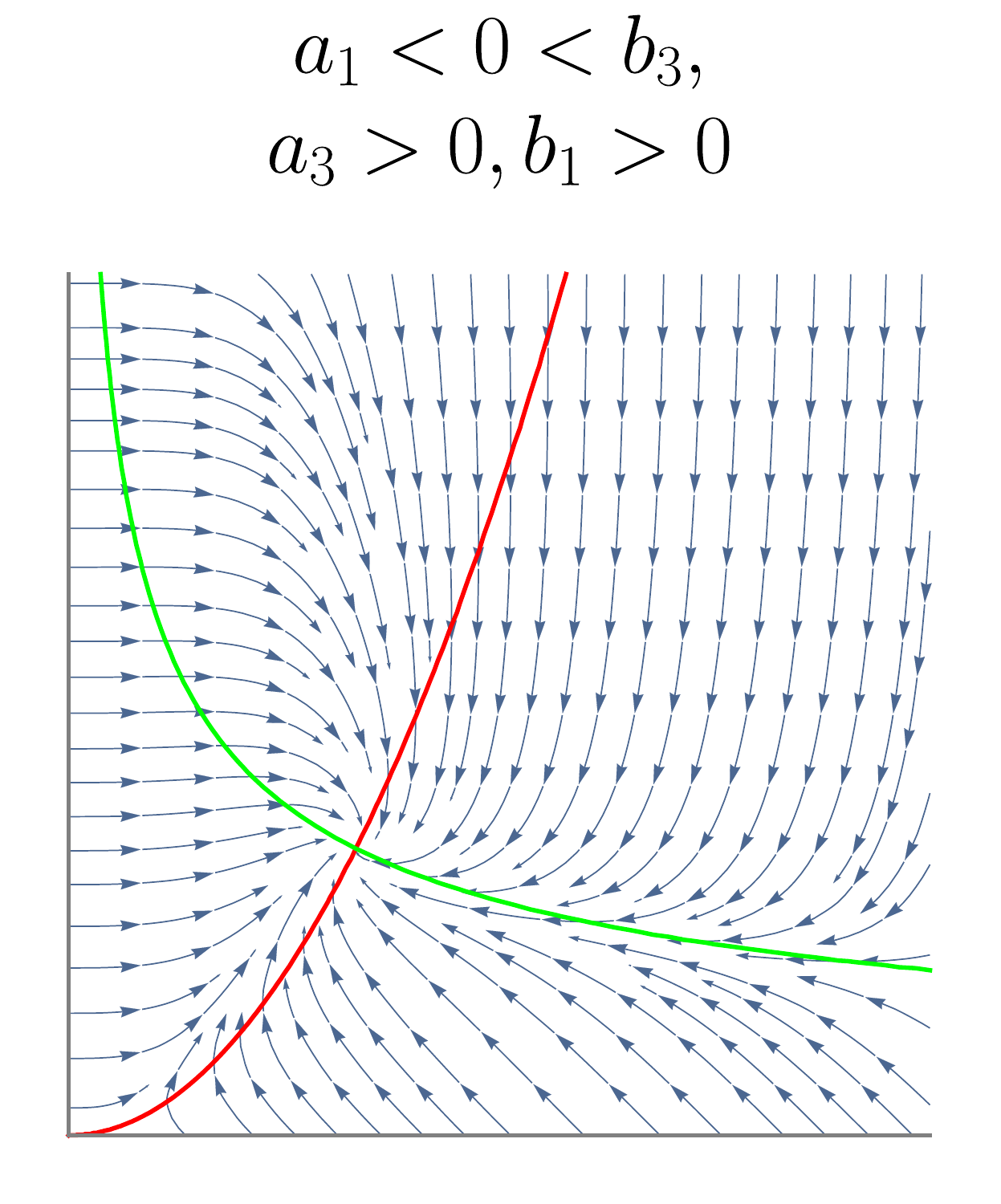} \\
\end{tabular}
\end{center}
\caption{Case $a_1 < 0 < b_3$ and $\det C < 0$. No solution approaches the boundary of the positive quadrant, and there is no unbounded solution.}
\label{fig:streamplots_very_stable_region}
\end{figure}

\begin{figure}
\begin{center}
\begin{tabular}{cc}
\includegraphics[scale=.39]{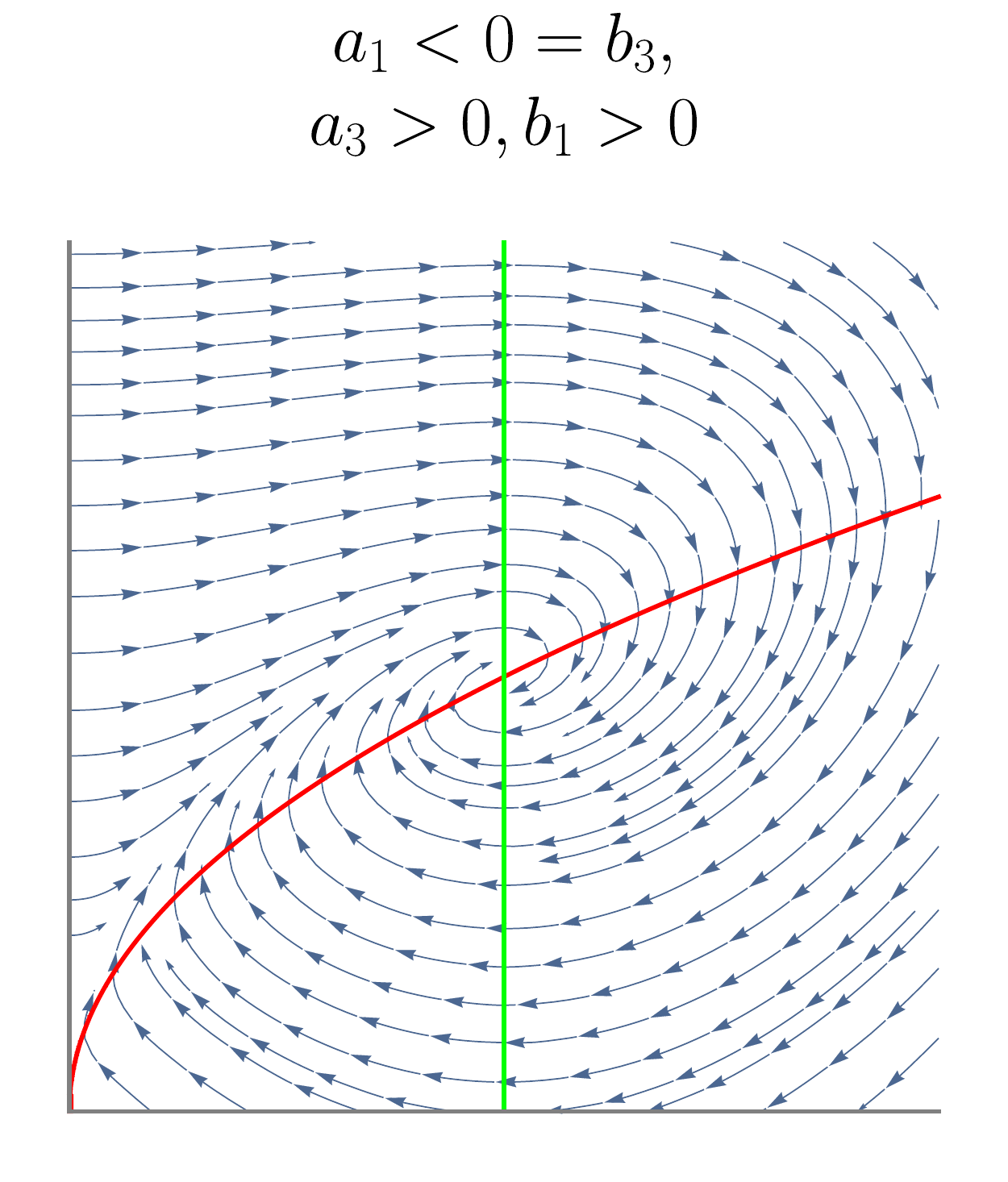} &
\includegraphics[scale=.39]{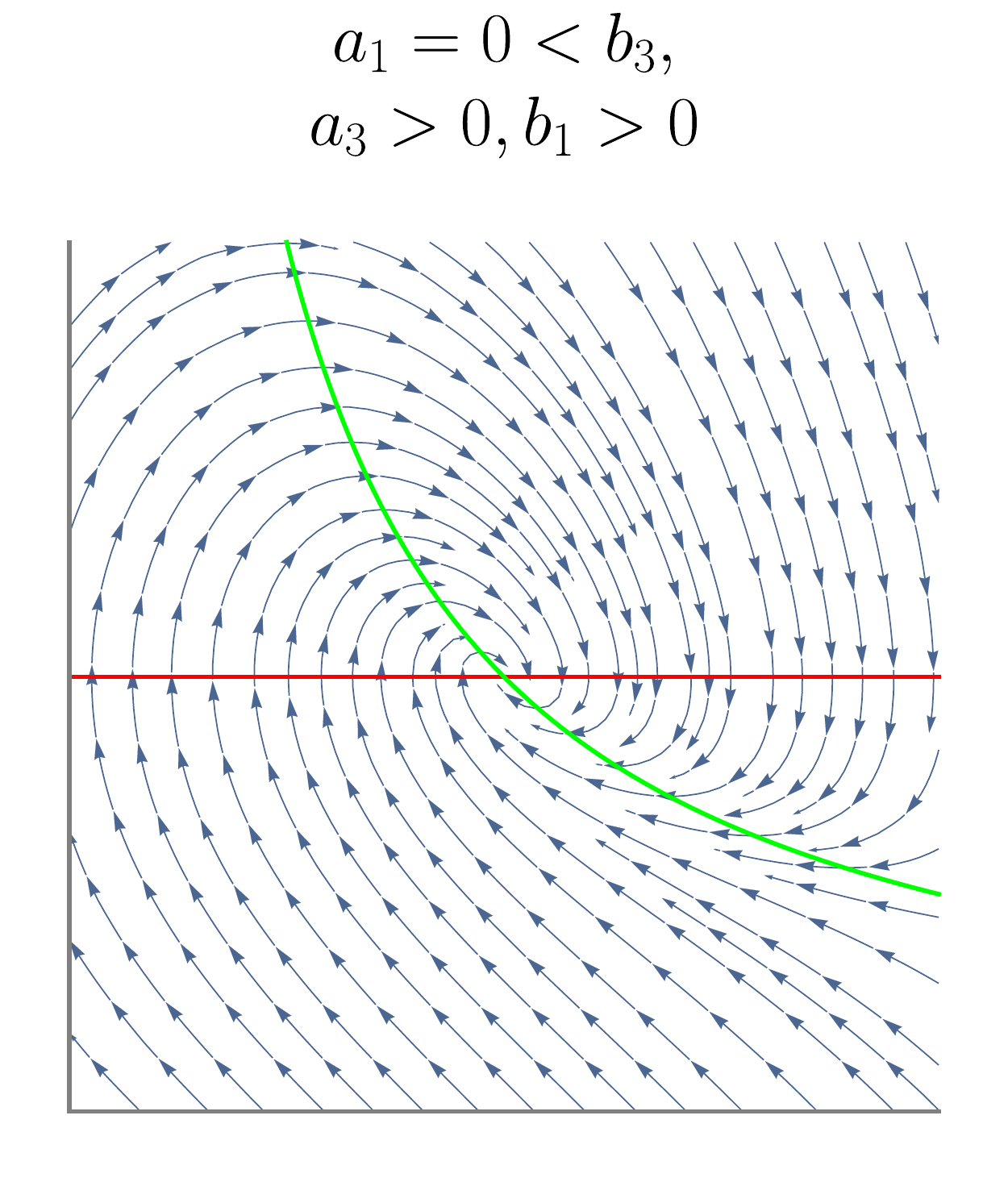} \\
\includegraphics[scale=.39]{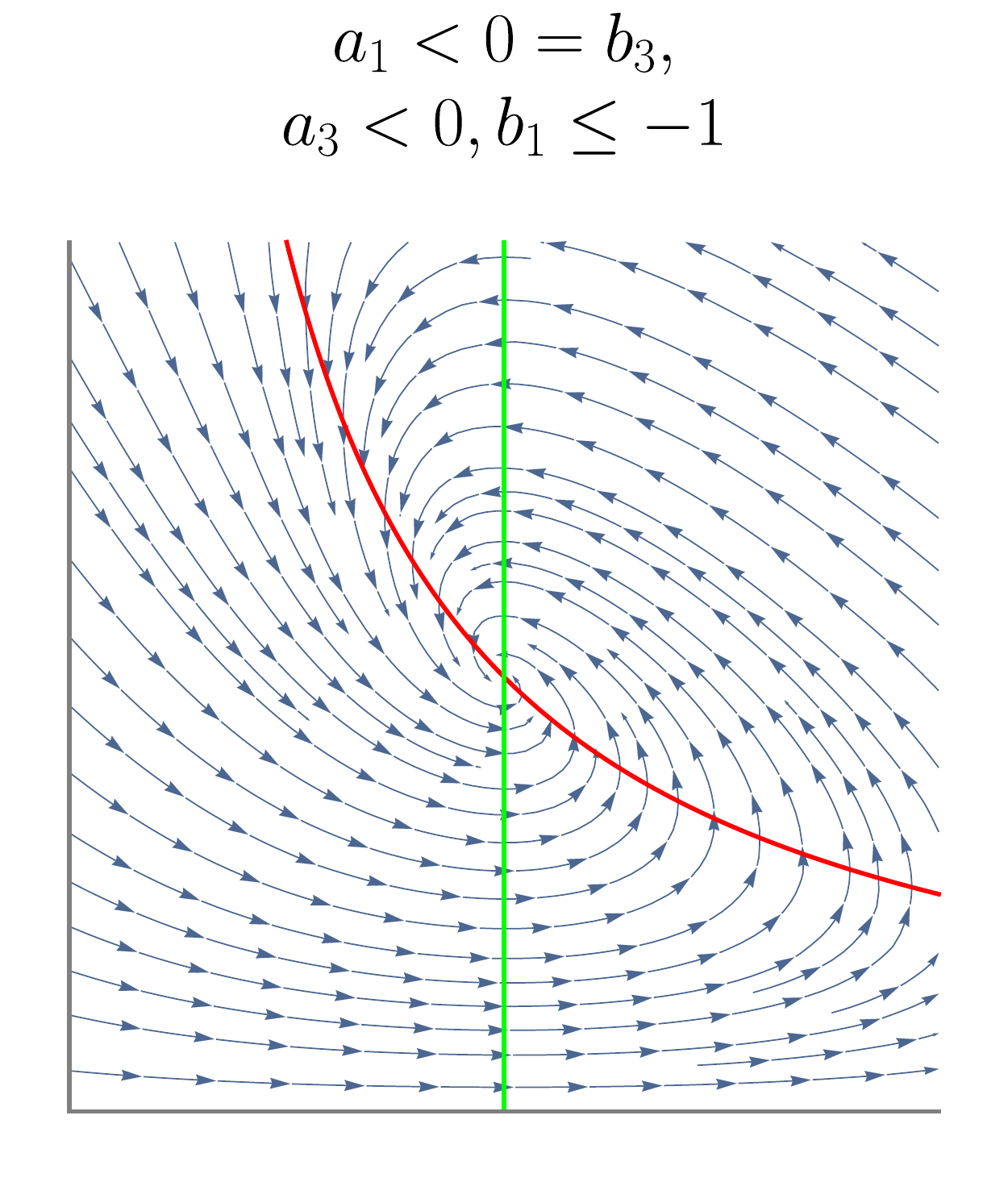} &
\includegraphics[scale=.39]{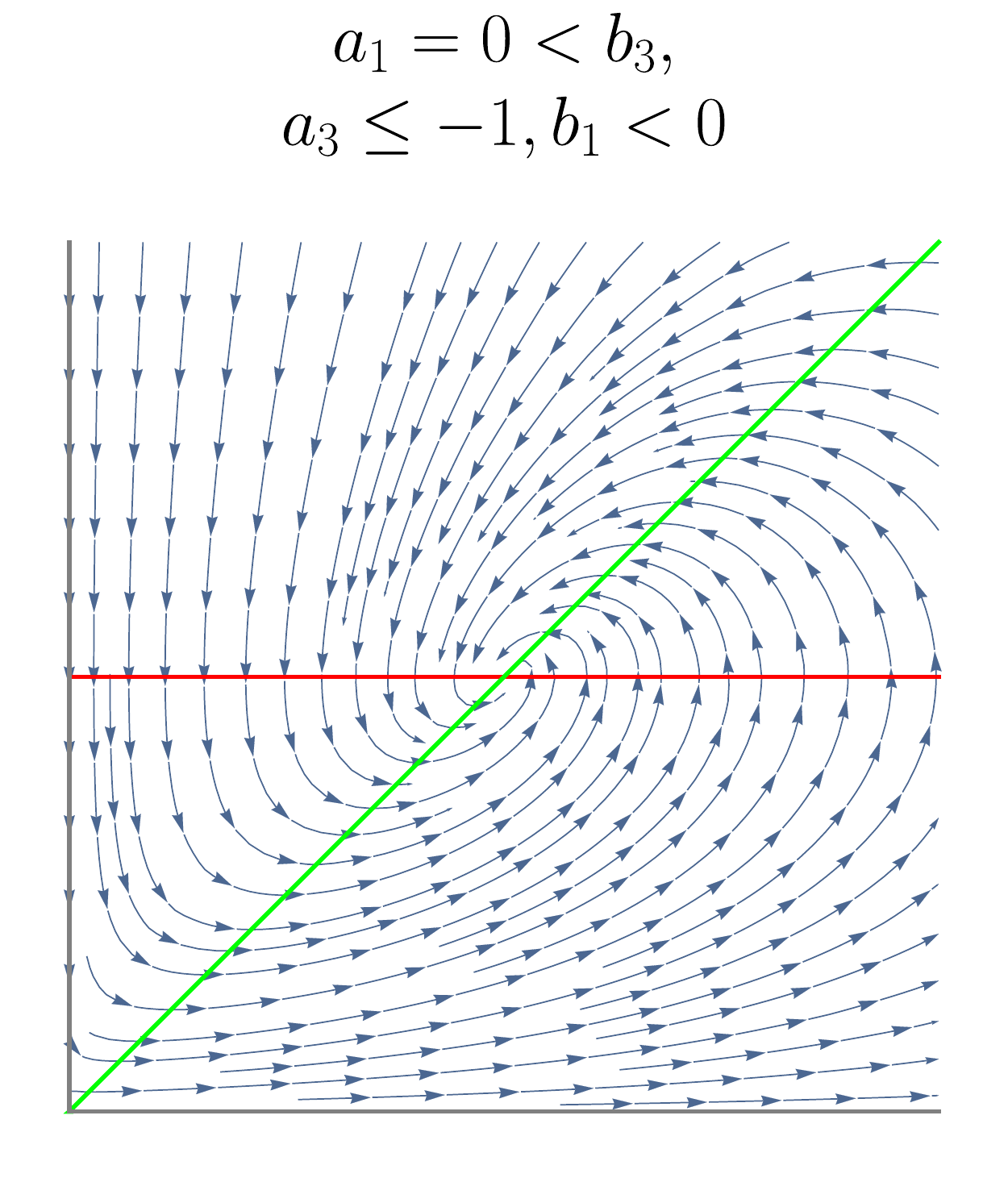} \\
\includegraphics[scale=.39]{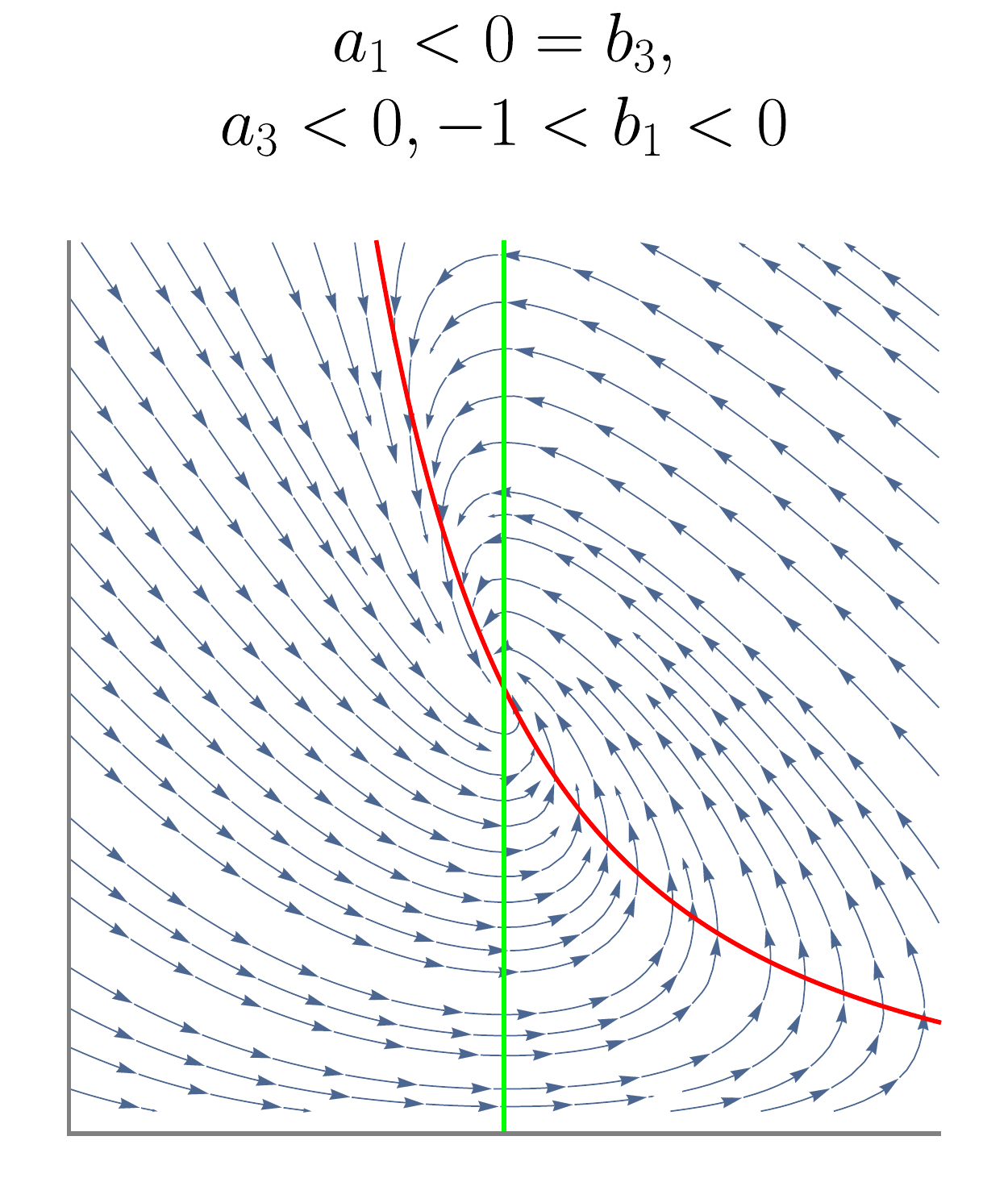} &
\includegraphics[scale=.39]{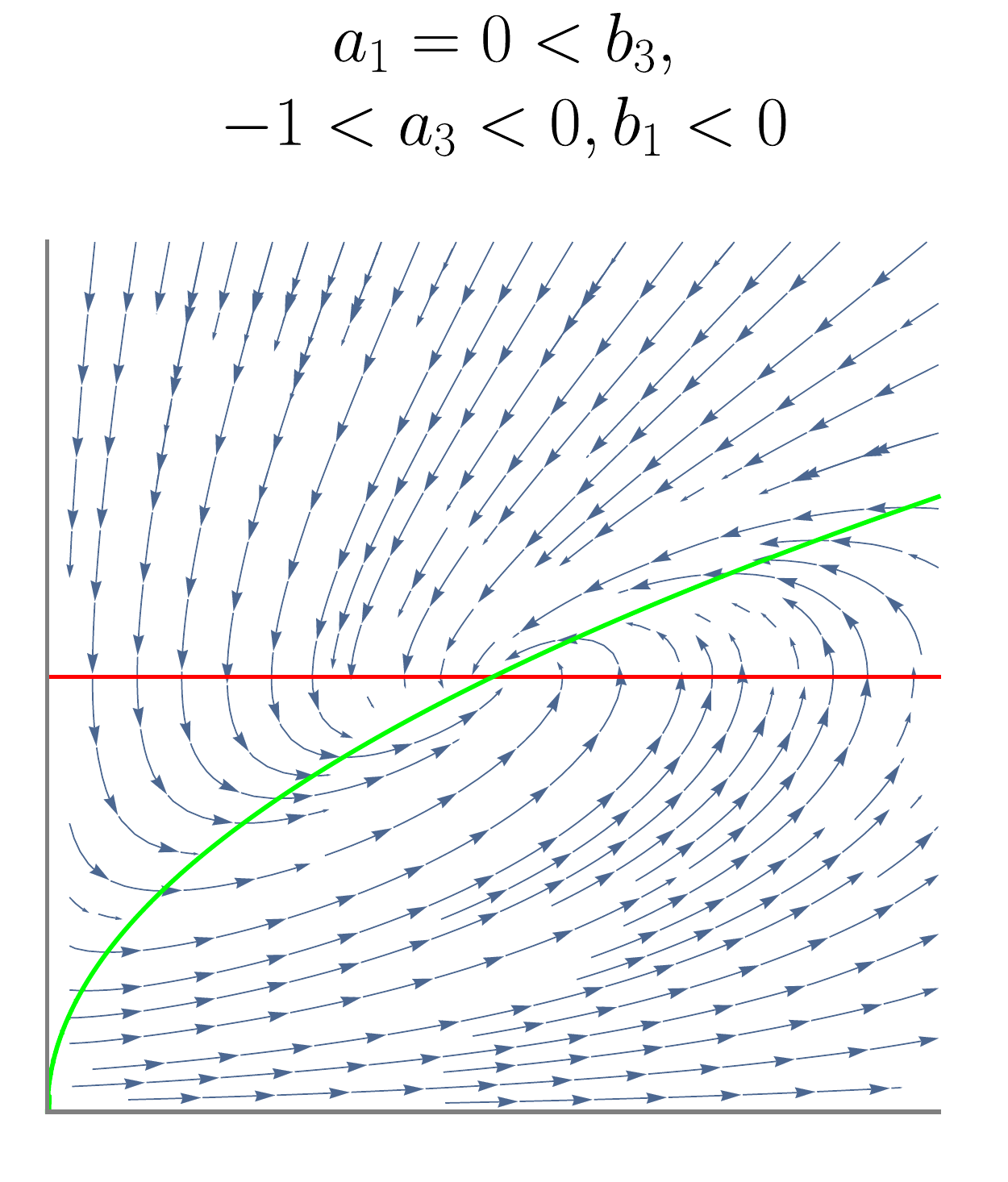}
\end{tabular}
\end{center}
\caption{Cases $a_1 < 0 = b_3$ and $a_1 < 0 = b_3$ (and $\det C < 0$). In the top and bottom row, some solutions approach the boundary of the positive quadrant, while in the middle row no solution approaches the boundary.}
\label{fig:streamplots_boundary_of_the_very_stable_region}
\end{figure}

\begin{figure}
\begin{center}
\begin{tabular}{cc}
\includegraphics[scale=.37]{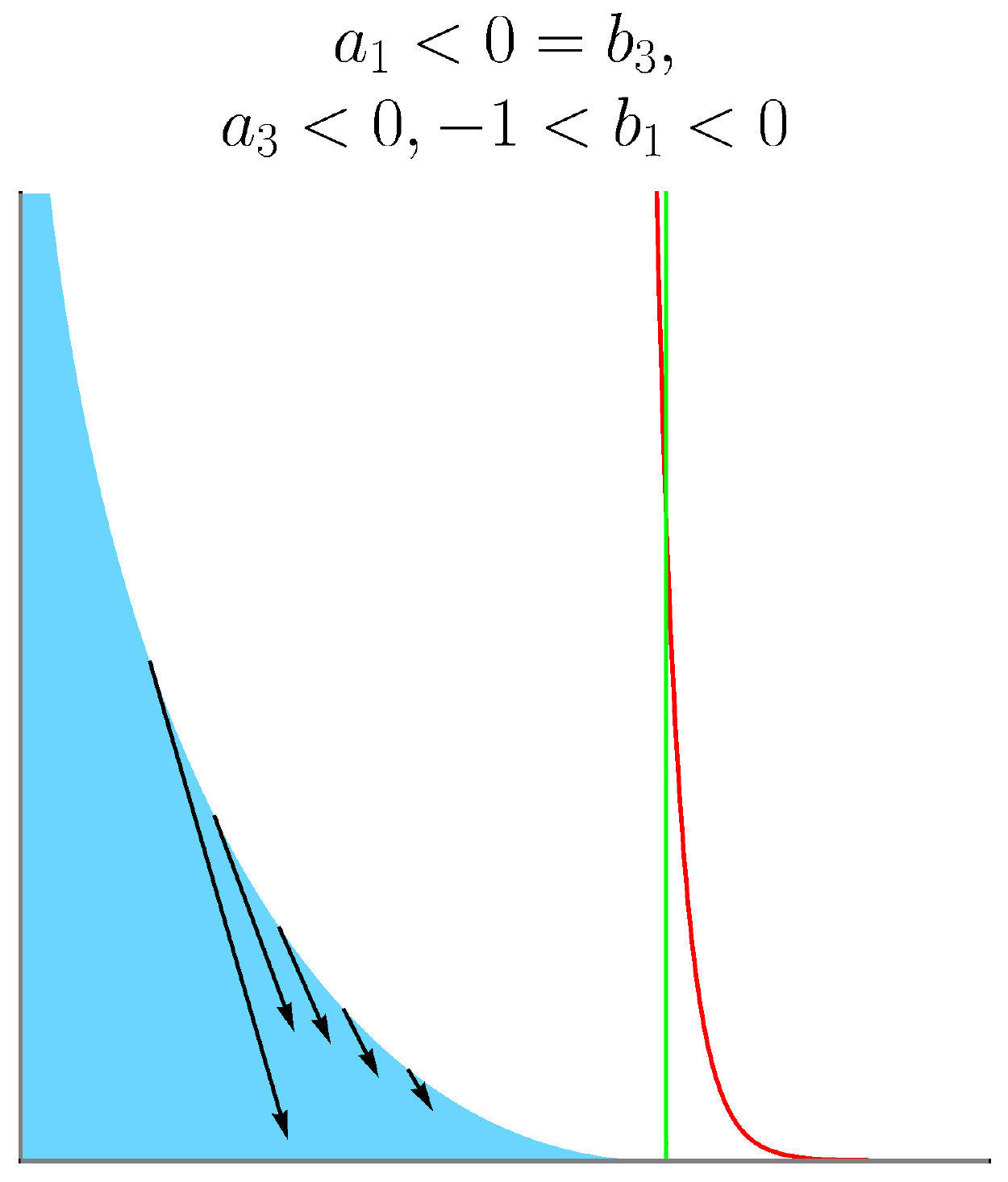} &
\includegraphics[scale=.37]{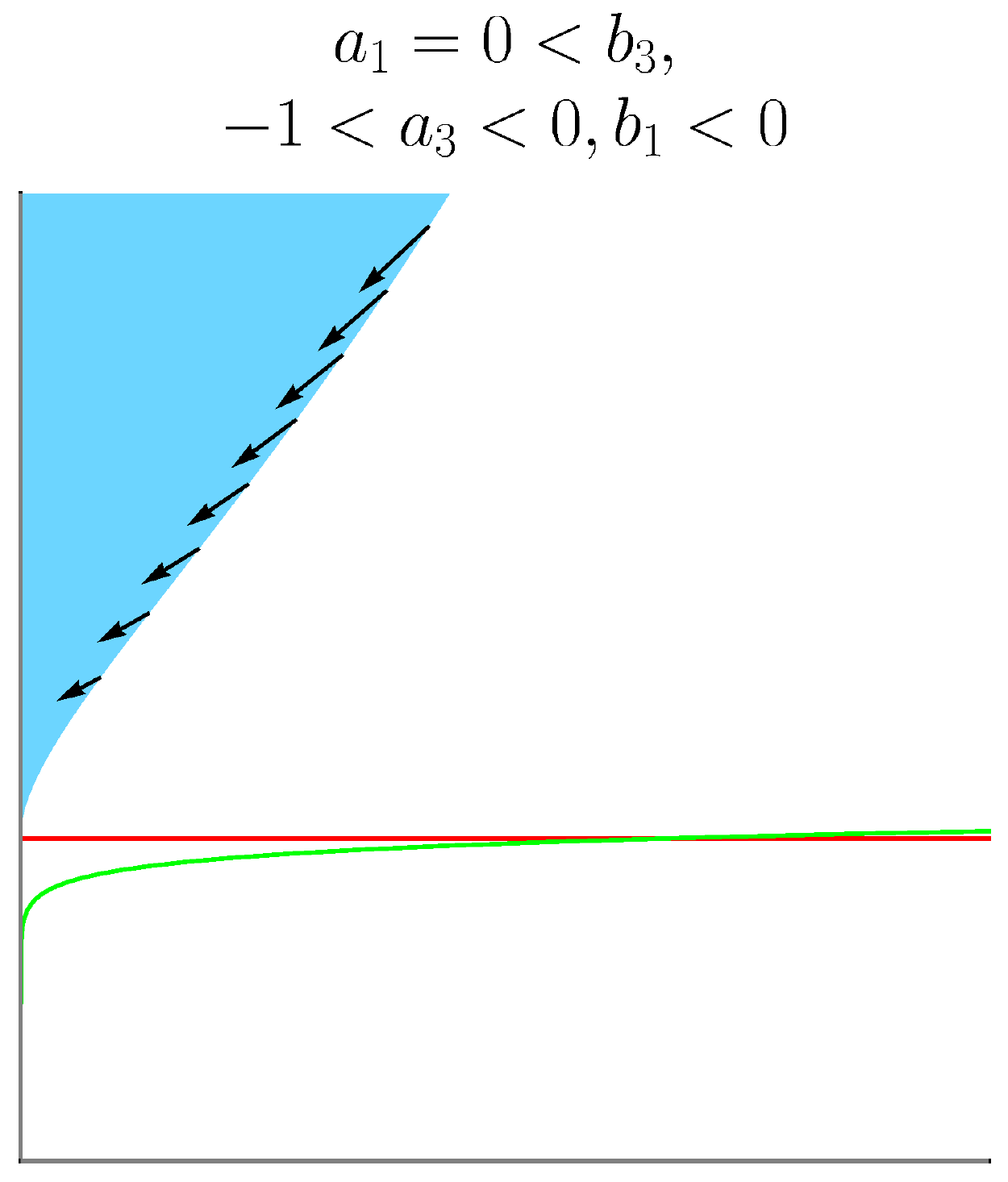}
\end{tabular}
\end{center}
\caption{The forward invariant sets constructed in Subsection \ref{subsubsec:approach_the_boundary_in_the_boundary_case}.}
\label{fig:fwd_invar_sets_boundary_of_the_very_stable_region}
\end{figure}

\begin{figure}
\begin{center}
\begin{tabular}{cc}
\includegraphics[scale=.37]{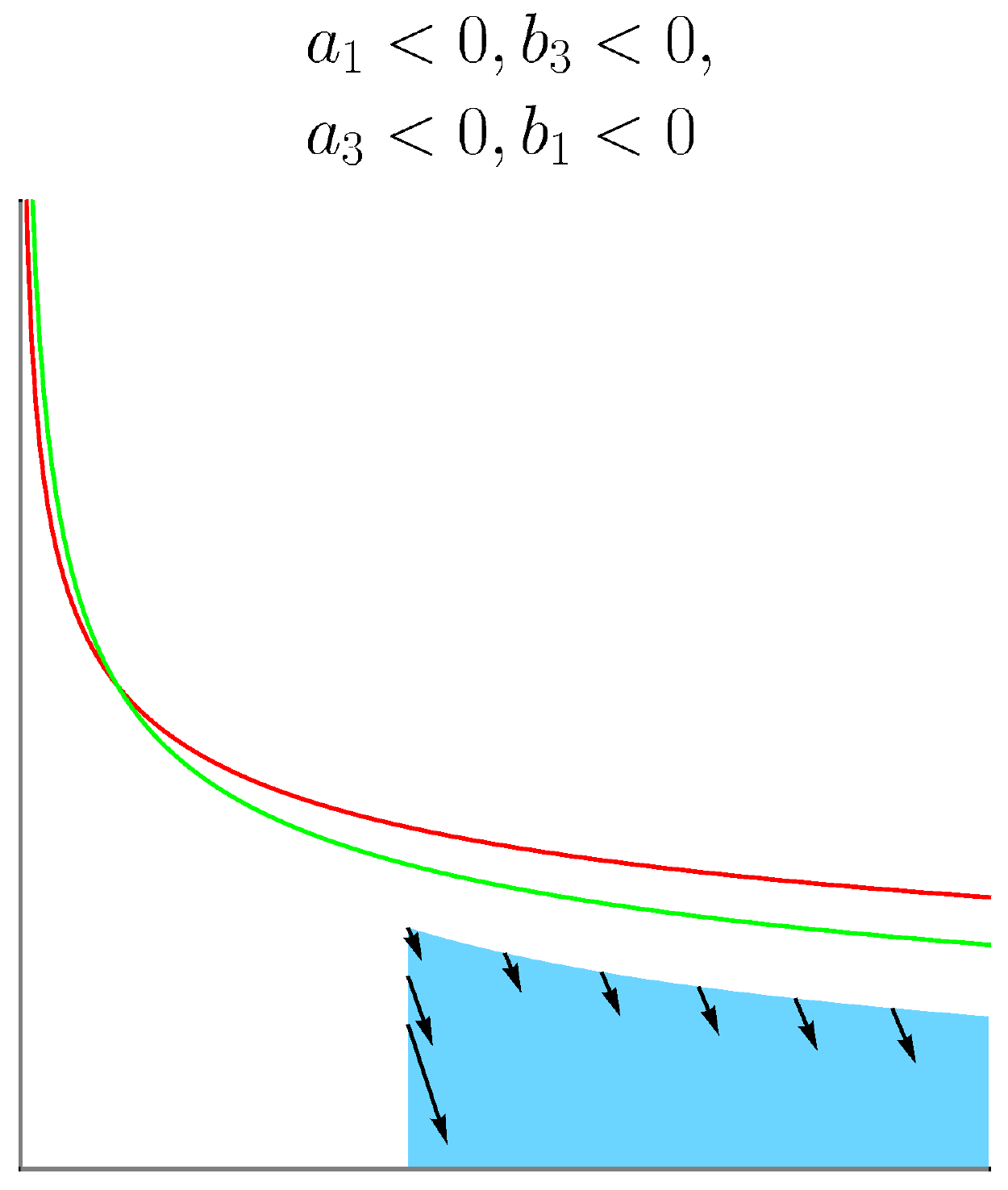} &
\includegraphics[scale=.37]{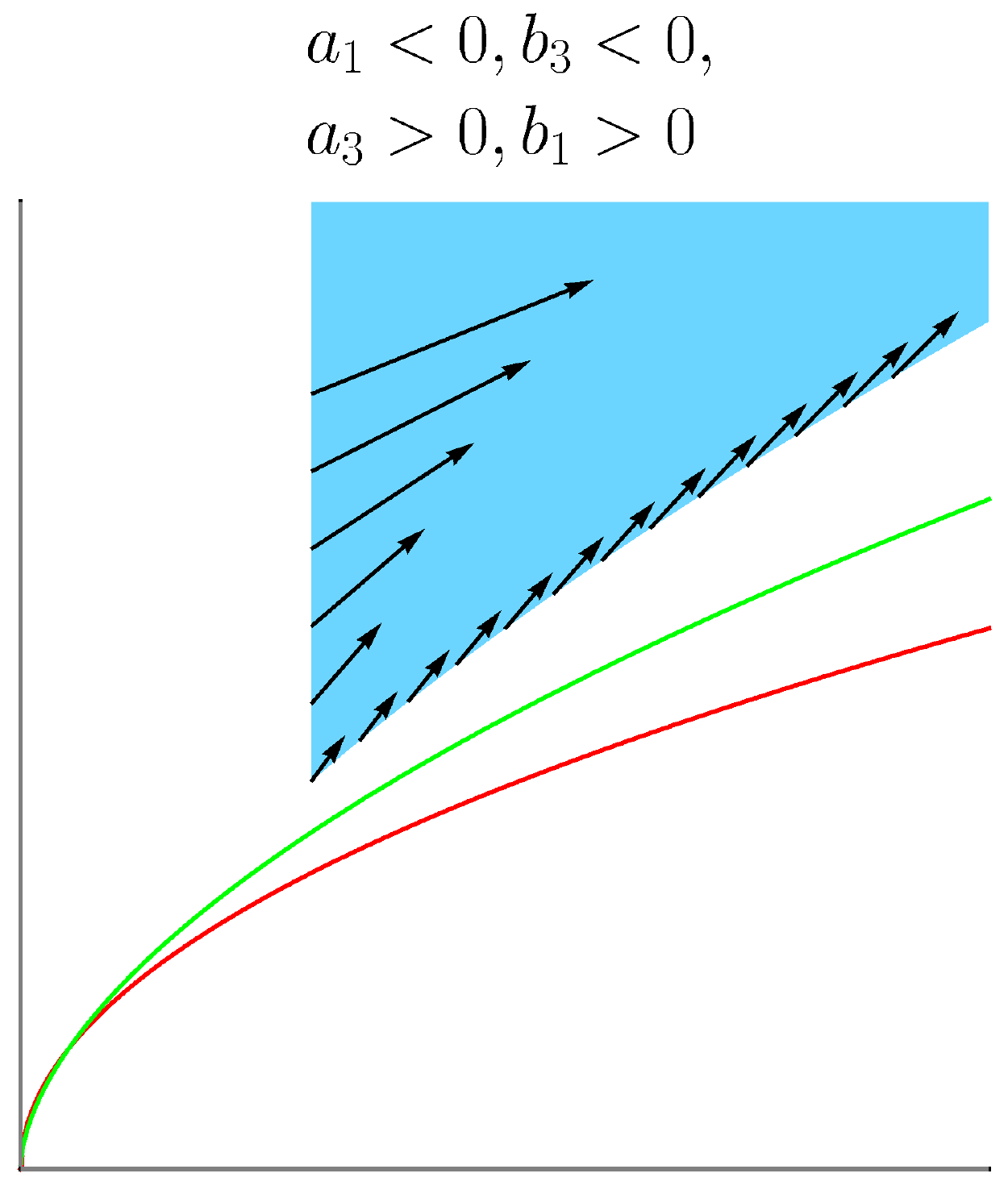}\\
\includegraphics[scale=.37]{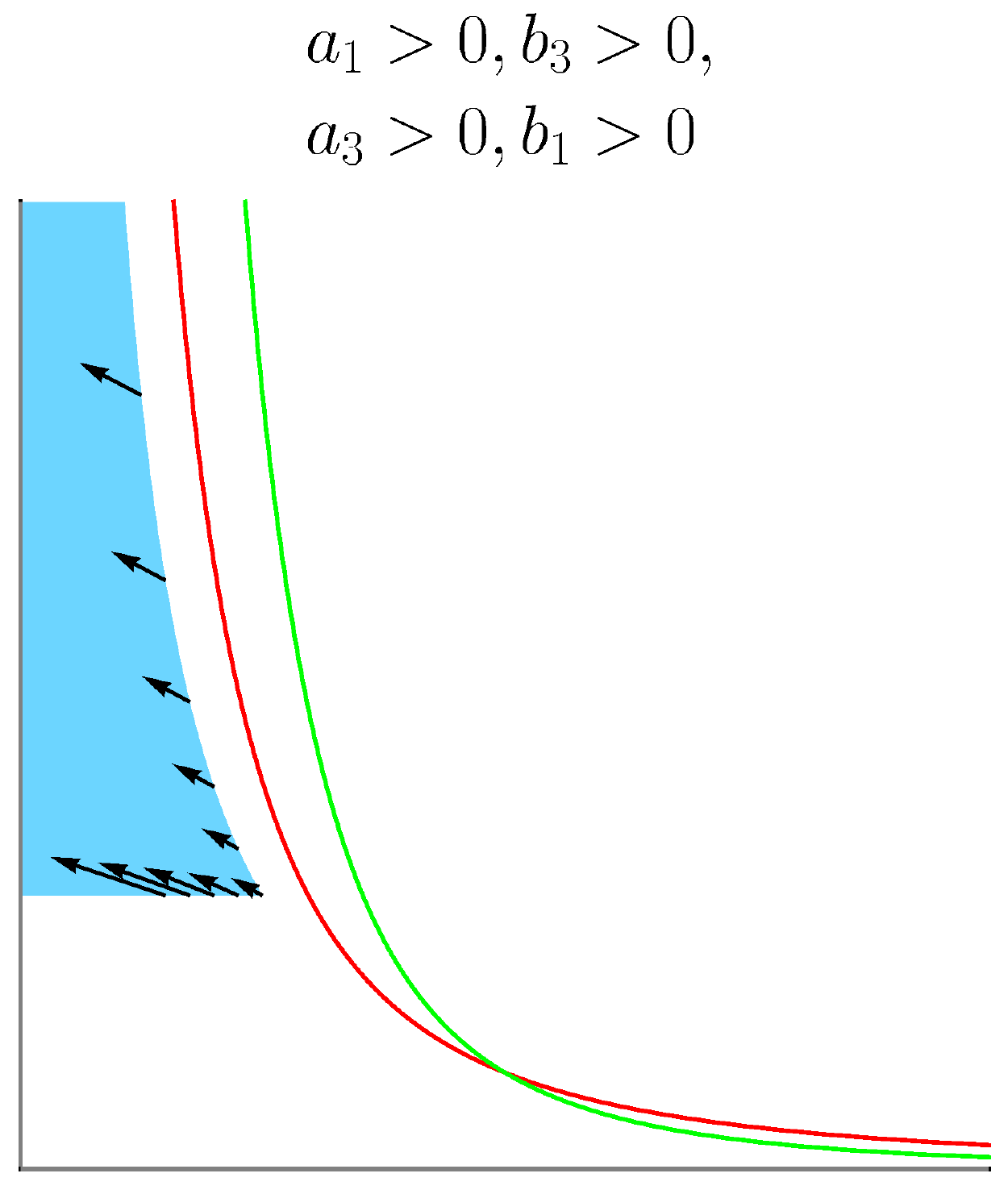} &
\includegraphics[scale=.37]{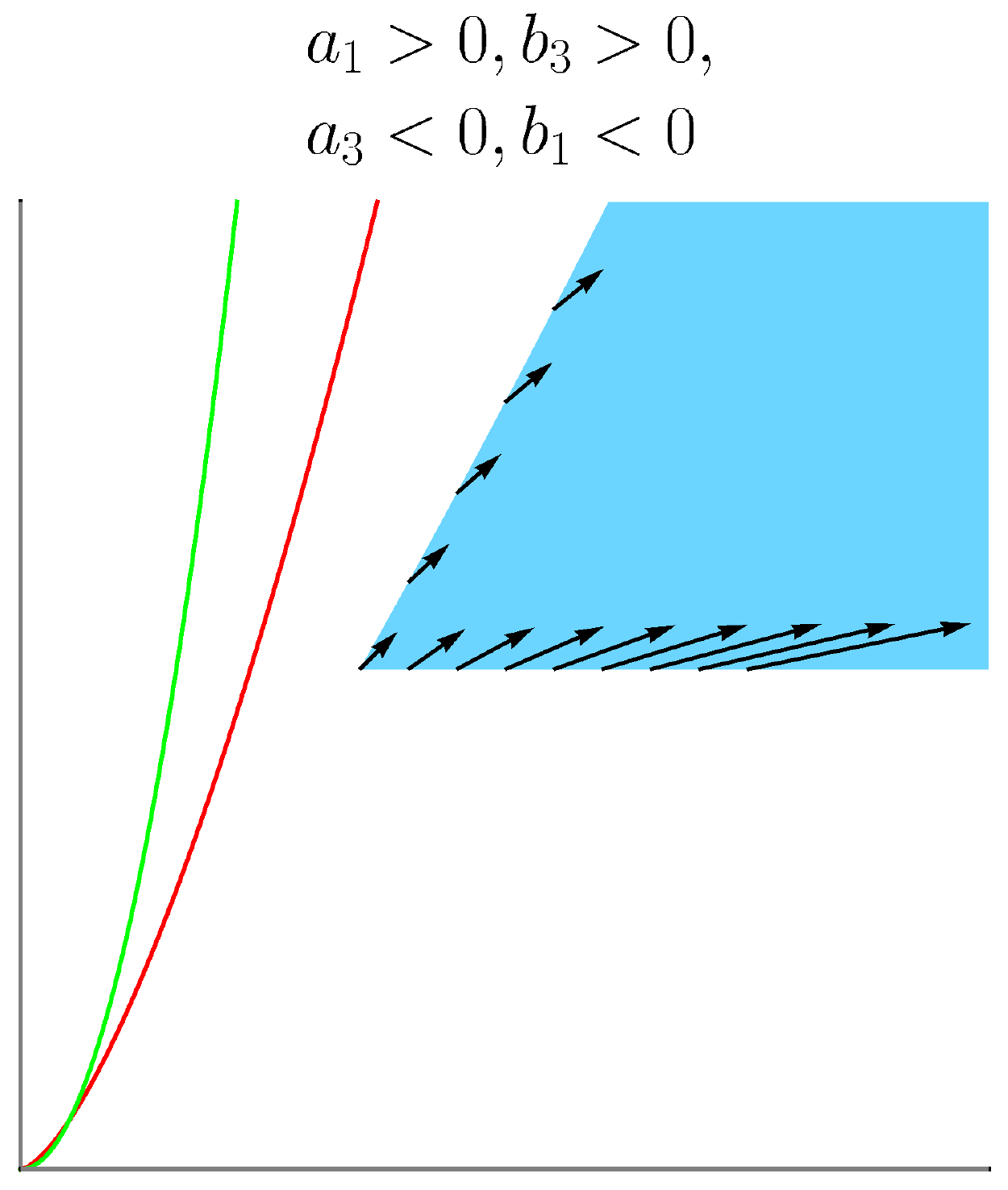}\\
\end{tabular}
\end{center}
\caption{The forward invariant sets constructed in Lemmas \ref{lemma:a1_neg_b1_neg_a3_neg_b3_neg} (top left), \ref{lemma:a1_neg_b1_pos_a3_pos_b3_neg} (top right), \ref{lemma:a1_pos_b1_pos_a3_pos_b3_pos} (bottom left), and \ref{lemma:a1_pos_b1_neg_a3_neg_b3_pos} (bottom right).}
\label{fig:fwd_invar_sets}
\end{figure}

\begin{figure}
\begin{center}
\includegraphics[scale=.44]{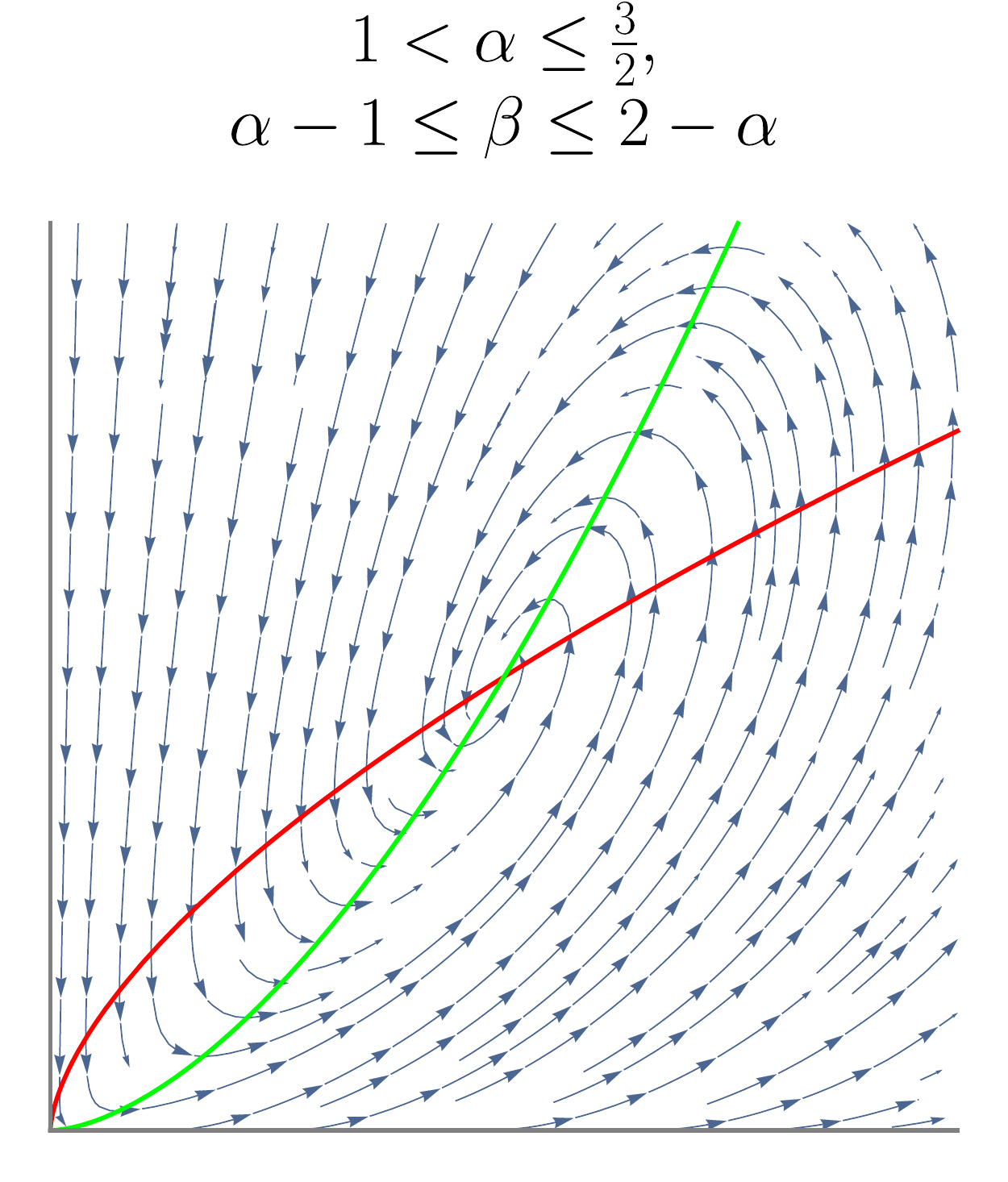}
\end{center}
\caption{Case $1<\al\leq\frac{3}{2}$ and $\al-1 \leq \be \leq 2-\al$ for the ODE~\eqref{eq:ode_two_exponents}.
Global asymptotic stability is proved in Subsection \ref{subsubsec:alpha_beta_system_glob_stab}.}
\label{fig:streamplot_triangle}
\end{figure}

\begin{figure}
\begin{center}
\begin{tabular}{cc}
\includegraphics[scale=.44]{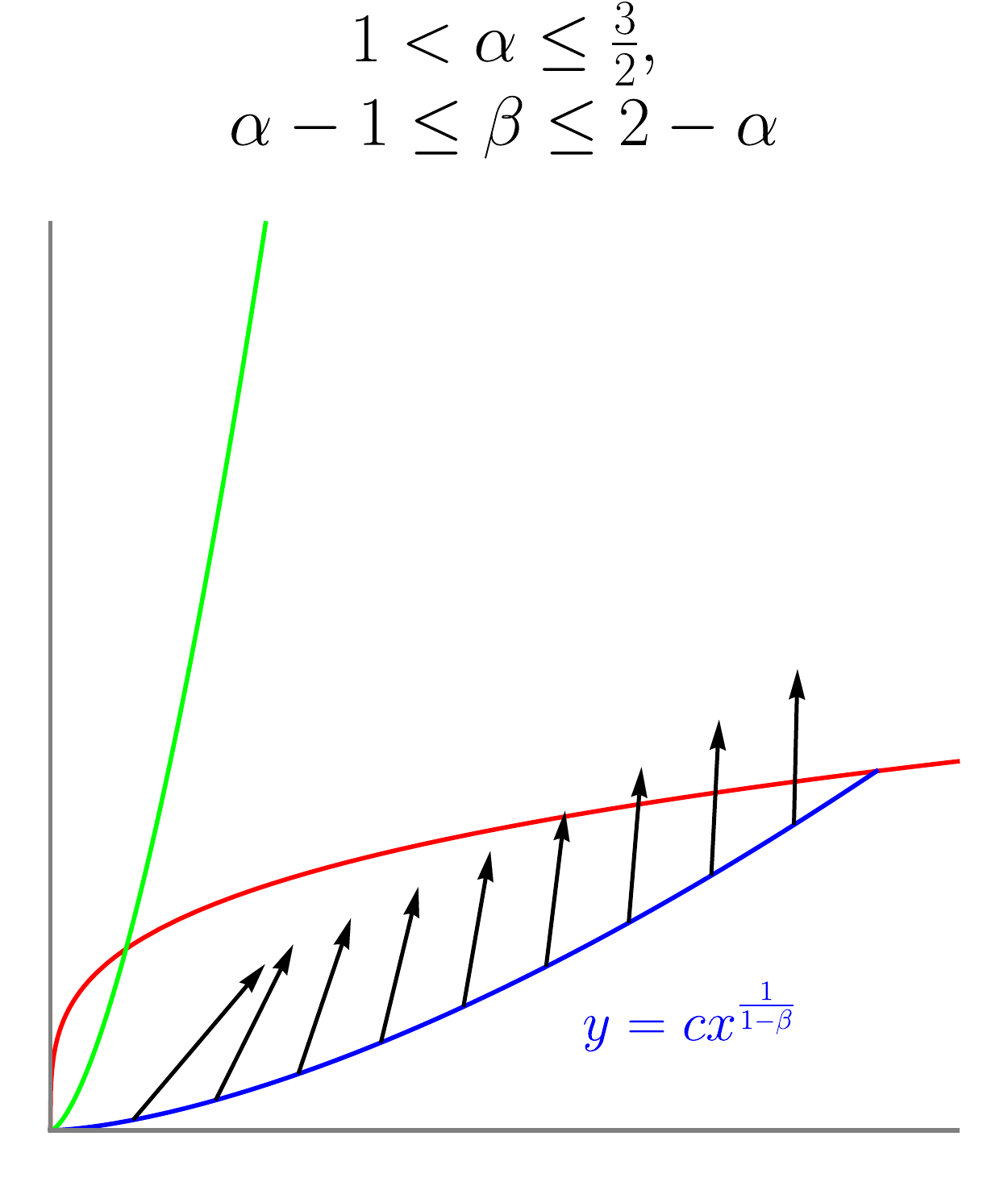} &
\includegraphics[scale=.44]{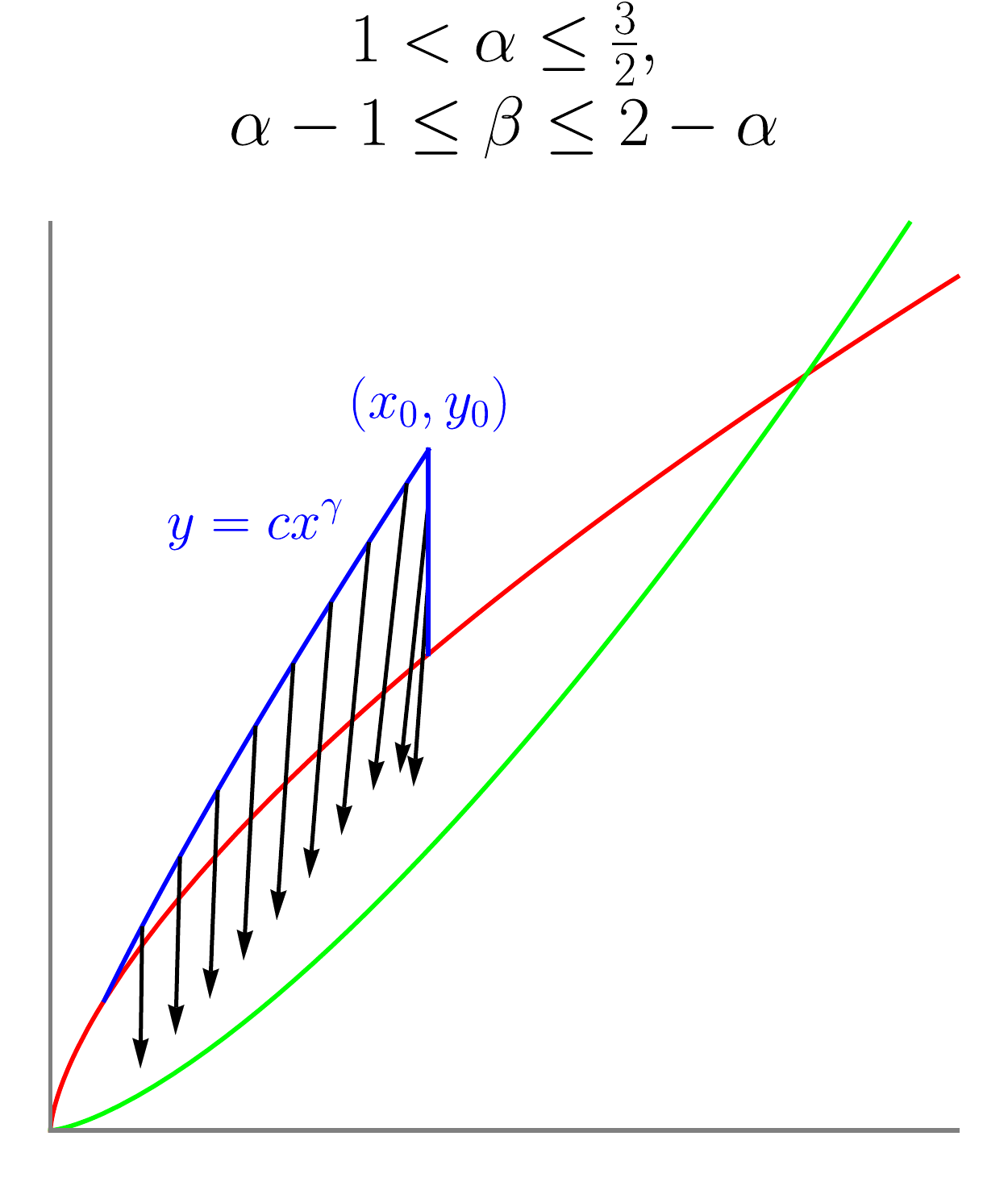}
\end{tabular}
\end{center}
\caption{Lemma~\ref{lemma:triangle_no_escape}: every solution starting below the $x$-nullcline with $x>1$ eventually reaches the $x$-nullcline (left). 
Lemma~\ref{lemma:triangle_no_approach_origin}: every solution starting above the $x$-nullcline with $y>0$ small enough eventually reaches the $x$-nullcline (right).}
\label{fig:triangle_no_escape_no_approach_origin}
\end{figure}


\end{document}